\newlist{ienum}{enumerate}{3}
\setlist[enumerate]{label=(\roman*),partopsep=0pt,itemsep=0pt,topsep=3pt,wide,leftmargin=0.2em,labelindent=0em}
\setlist[ienum]{label=(\roman*),partopsep=0pt,itemsep=0pt,topsep=3pt,wide,leftmargin=.5em,labelindent=.5em}
\setlist{nolistsep}
    \pgfplotsset{
        compat=1.12,
    }
\newtheoremstyle{named}
{}
{}
{\itshape}
{}
{\bfseries}
{}
{\newline}
{\thmname{#1} \thmname{#2} (\textit{\thmname{#3}})}
\newtheoremstyle{mythm}
{}
{}
{\itshape}
{}
{\bfseries}
{}
{1em}
{\thmname{#1} \thmname{#2}}
\newtheoremstyle{futref}
{}
{}
{\itshape}
{}
{\bfseries}
{}
{1em}
{\thmname{#1} \thmname{#3}}
\newtheoremstyle{mydef}
{}
{}
{\itshape\let\emph\textbf}
{}
{\bfseries}
{.}
{1em}
{\thmname{#1}}
\newtheoremstyle{myex}
{}
{}
{}
{}
{\bfseries}
{}
{1em}
{\thmname{#1} \thmname{#2}}
\theoremstyle{futref}
\theoremstyle{mythm}
\newtheorem{theorem}{Theorem}
\newtheorem{prop}[theorem]{Proposition}
\newtheorem{lem}[theorem]{Lemma}
\newtheorem{cor}[theorem]{Corollary}
\theoremstyle{mydef}
\theoremstyle{myex}
\theoremstyle{named}
\theoremstyle{remark}
\let\oldproof=\proof
\let\oldendproof=\endproof
\renewenvironment{proof}{\oldproof[\textsc{\proofname}]}{\oldendproof}
\newcommand{\N}{\ensuremath{\mathbb{N}}}    
\newcommand{\Z}{\ensuremath{\mathbb{Z}}}    
\newcommand{\Q}{\ensuremath{\mathbb{Q}}}    
\renewcommand{\H}{\ensuremath{\mathbb{H}}}    
\newcommand{\C}{\ensuremath{\mathbb{C}}}    
\newcommand{\F}{\ensuremath{\mathcal{F}}}	
\newcommand{\Pen}{\ensuremath{\mathcal{P}}}	
\newcommand{\M}{\ensuremath{\mathcal{M}}}		
\newenvironment{smatrix}{\bigl(\begin{smallmatrix}}{\end{smallmatrix}\bigr)}
\let\Re\relax
\DeclareMathOperator{\Re}{\operatorname{Re}}
\let\Im\relax
\DeclareMathOperator{\Im}{\operatorname{Im}}
\DeclareMathOperator{\GL}{\operatorname{GL}}
\DeclareMathOperator{\SL}{\operatorname{SL}}
\DeclareMathOperator{\Mat}{\operatorname{Mat}}
\DeclareMathOperator{\sl2z}{\operatorname{SL_2(\Z)}}
\DeclareMathOperator{\vol}{vol}
\DeclareMathOperator{\Aut}{Aut}
\DeclareMathOperator{\Gal}{Gal}
\DeclareMathOperator{\im}{im}
\newcommand{\twostack}[2]{\subarray{c}\scriptscriptstyle#1\\\scriptscriptstyle#2\endsubarray}
\let\emph\textit
\let\subset\subseteq
\numberwithin{theorem}{section}
\title{Integrality properties in the Moduli Space of Elliptic Curves: Isogeny Case}
\author{Stefan Schmid}
\date{}
\begin{document}

\setlist[enumerate]{label=(\roman*),partopsep=0pt,itemsep=0pt,topsep=3pt,wide,leftmargin=0.2em,labelindent=0em}
\setlist[ienum]{label=(\roman*),partopsep=0pt,itemsep=0pt,topsep=3pt,wide,leftmargin=.5em,labelindent=.5em}

\maketitle

\begin{abstract}
	For a fixed $j$--invariant $j_0$ of an elliptic curve without complex multiplication
	we bound the number of $j$--invariants $j$ that are algebraic units and such that
	elliptic curves corresponding to $j$ and $j_0$ are isogenous.
	Our bounds are effective. We also modify the problem slightly by fixing a
	singular modulus $\alpha$ and looking at all $j$ such that $j-\alpha$ is an
	algebraic unit and such that elliptic curves corresponding to $j$ and $j_0$
	are isogenous. The number of such $j$ can again be bounded effectively.
\end{abstract}

\thispagestyle{empty}

\newcommand{\T}{\ensuremath{\mathcal{T}}}

\section{Introduction}

In this text, $K$ will be a number field. By a \emph{finite place} $\nu$ of $K$
we mean a non--archimedean absolute value on $K$ that restricts to the
$p$--adic absolute value on $\Q$ for some rational prime $p$.
We thus have $|p|_\nu = p^{-1}$.
The integer $d_v$ will denote the degree of the completion of $K$ with respect to
the valuation $\nu$ over the field $\Q_p$. We define the (absolute logarithmic) height
of an algebraic number $j$ by
\begin{displaymath}
	h(j) = \frac 1{[K:\Q]} \left( \sum_\sigma \log \max \{1,|\sigma(j)|\}
				+ \sum_\nu d_\nu \log \max\{1,|j|_\nu\} \right),
\end{displaymath}
where $K$ is any number field containing $j$ and $\sigma$ runs over all field
embeddings $\sigma\colon K \rightarrow \C$ and $\nu$ runs over all finite places of $K$.
This definition is independent of the choice of $K$.
Note that $j$ is an algebraic integer if and only if $|j|_\nu \le 1$ holds for
all finite places. 
Thus the height of an algebraic integer is given by
\begin{displaymath}
	h(j) = \frac 1{[K:\Q]} \sum_{|\sigma(j)|>1} \log |\sigma(j)|.
\end{displaymath}
Since $h(j) = h(j^{-1})$ holds for all algebraic numbers, we obtain for
an algebraic integer the equality
\begin{align*}
	h(j) &= \frac 1{[K:\Q]} \sum_{|\sigma(j)|>1} \log |\sigma(j)|	\notag	\\
	&= -\frac 1{[K:\Q]} \left( \sum_{|\sigma(j)|<1} \log |\sigma(j)|
				+ \sum_\nu d_\nu \log |j|_\nu \right).
\end{align*}

Let $j$ be an algebraic unit, so that it has norm $\pm 1$. The height of $j$ then reduces to
\begin{equation}\label{eq:alg_int_height}
	h(j) = \frac 1{\left[K:\Q\right]} \sum_{\lvert \sigma(j) \rvert > 1} \log \lvert \sigma(j) \rvert
		= -\frac 1{\left[K:\Q\right]} \sum_{\lvert \sigma(j) \rvert < 1} \log \lvert \sigma(j) \rvert
\end{equation}
where $D = \left[K:\Q\right]$, and $\sigma$ runs over all $\Q$--homomorphisms $K \hookrightarrow \C$.

Note that $h$ also denotes the Faltings height (with the $\frac 12\log\pi$--term) but
there should be no ambiguity.

The multiplicative height will be denoted by $H(\cdot) = e^{h(\cdot)}$ and satisfies
\begin{equation}\label{eq:ht_sum_lower_bound}
	H(\alpha\beta) \le 2H(\alpha)H(\beta)
\end{equation}
for any two algebraic numbers $\alpha$ and $\beta$.

We will denote Klein's modular function by $j\colon \H \rightarrow \C$.
For a fixed $\alpha \in \bar\Q$ the number of $j$--invariants $j$ of elliptic curves
with complex multiplication such that $j-\alpha$ is a unit can be effectively bounded.
See \cite{bhk} and \cite{cmcase} for details.

We want to look at a similar problem where $j$ does not have complex multiplication.
Without further assumptions the number of such $j$ can not be bounded.
We thus fix an elliptic curve without complex multiplication, and denote by $j_0$ its
$j$--invariant. Assume that the curve is defined over a number field $K$ contained
in $\C$.
Our aim is to prove the following result.
\begin{theorem}
	
	Let $j_0$ be the $j$--invariant of an elliptic curve without complex multiplication.
	Then there are at most finitely many $j$--invariants $j$ of elliptic curves that are isogenous to
	an elliptic curve corresponding to $j_0$ and such that $j$ is an algebraic unit.

\end{theorem}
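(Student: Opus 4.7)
The plan is to parametrize the $j$-invariants isogenous to $j_0$ by modular polynomials and then use competing upper and lower bounds on $h(j)$ to force the isogeny degree $N$ into a bounded range. Fix $\tau_0 \in \H$ with $j(\tau_0) = j_0$. Every such $j$ arises as $j = j(\gamma\tau_0)$ for some primitive integer matrix $\gamma$ of determinant $N \ge 1$; equivalently, $\Phi_N(j, j_0) = 0$, where $\Phi_N$ is the $N$-th modular polynomial. Since $\deg_X\Phi_N(X, j_0) = \psi(N) = N\prod_{p\mid N}(1 + p^{-1})$, each level $N$ accounts for only finitely many $j$, so it suffices to bound $N$ effectively in terms of $j_0$ under the unit hypothesis.

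For an upper bound I would combine Faltings' inequality $|h_F(E_j) - h_F(E_{j_0})| \le \tfrac12\log N$ (stability of the Faltings height under isogeny) with Silverman's comparison $|h_F(E) - \tfrac1{12}h(j(E))| = O(\log(2 + h(j(E))))$; together these give $h(j) \le 6\log N + C_1(j_0)$.

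For a matching lower bound, the unit hypothesis is essential. By \eqref{eq:alg_int_height}, $h(j) = -D^{-1}\sum_{|\sigma(j)|<1}\log|\sigma(j)|$, so the positive and negative archimedean contributions balance. Each conjugate equals $j(\gamma'\tau_0')$ for some $\gamma'$ of determinant $N$ and some $\tau_0'$ with $j(\tau_0') = \sigma(j_0)$. The $q$-expansion $j(\tau) = q^{-1} + 744 + O(q)$ gives $\log|j(\tau)| \approx 2\pi\Im\tau$ in the cuspidal region, and standard representatives of $\SL_2(\Z)\backslash\{\gamma\in\Mat_2(\Z): \det\gamma = N\}/\SL_2(\Z)$ show that on average $\Im(\gamma\tau_0)$ (reduced to the fundamental domain) grows like $\log N$. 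Assembling this, $\sum_\sigma\log^+|\sigma(j)|$ grows at least like $c\psi(N)\log N$, and the unit-balancing identity then forces $h(j) \ge C_2\log N - C_3$ with $C_2 > 6$. Combining the two bounds yields $(C_2 - 6)\log N \le C_1 + C_3$, so $N$ is bounded; only finitely many $j$ satisfy $\Phi_N(j, j_0) = 0$ for $N$ in that range.

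The main obstacle is the lower bound with $C_2$ strictly greater than $6$: one must carefully account for cancellations between the large positive contributions of conjugates near the cusp and the few potentially large negative contributions of conjugates near the elliptic point $\rho = e^{2\pi i/3}$, where $j$ vanishes to third order. Making this quantitative will likely require either an equidistribution result for Hecke orbits at the non-CM point $\tau_0$, or an Arakelov-intersection estimate for the divisors of $\Phi_N(X, j_0)$.
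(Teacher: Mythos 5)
Your proposal runs into a genuine obstruction at exactly the point you flag as the ``main obstacle,'' and I don't think it is repairable along the lines you suggest. The issue is that the two competing bounds you aim for have the \emph{same} leading constant. Autissier's averaging result (Corollaire~3.3 in \cite{autissier}, used in the paper's Corollary~\ref{cor:lowerbound}) shows that over all $\psi(N)$ curves $N$-isogenous to $E_0$ the Faltings height averages to $h(E_0) + \tfrac12\log N - \lambda_N$ with $\lambda_N = O(\log\log N)$, which after translating to $j$-heights gives a lower bound of the shape $h(j) \ge 6\log N - O(\log\log N)$. Your upper bound is $h(j) \le 6\log N + O(1)$. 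Both are tight; the average growth rate of $\Im(\gamma\tau_0)$ across reduced Hecke translates is in fact what produces the coefficient $6$, not something larger. There is no room to push $C_2$ past $6$ by equidistribution or sharper Arakelov bookkeeping --- the product formula says the positive and negative archimedean parts of a unit balance exactly, and the Hecke-orbit averaging fixes the rate at $6\log N$ from both sides. You need an entirely different upper bound.

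The paper instead exploits the unit condition on the other side of the ledger. Since $j$ is a unit, Equation~\eqref{eq:alg_int_height} lets one write $h(j) = -\tfrac 1D \sum_{|\sigma(j)|<1}\log|\sigma(j)|$, i.e.~the height is controlled entirely by the embeddings where $|\sigma(j)|$ is \emph{small}, equivalently where $\tau^\sigma$ lies close to $\zeta = e^{2\pi i/6}$ or $\zeta^2$. The paper then does two things you don't: (i) a careful count (Lemma~\ref{lem:ellipse}, Davenport's theorem, Proposition~\ref{prop:clusterbound}) showing the number of such conjugates is only $O\bigl(\sqrt N\sigma_0(N) + \sqrt\varepsilon\,\psi(N)\bigr)$, i.e.~a vanishingly small fraction of the degree $D \asymp \psi(N)$ once $\varepsilon$ is chosen small; and (ii) a Baker-type lower bound (David's theorem on linear forms in two elliptic logarithms, Lemma~\ref{lem:linform}) showing each such $\log|\sigma(j)|$ is bounded below by $-c_1(\log N)^6 - c_2$. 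Taking $\varepsilon \asymp (\log N)^{-12}$ and combining gives $h(j) = O(\log\log N)$, which contradicts the $\Omega(\log N)$ lower bound you also invoke. So the contradiction in the paper is $\log N$ versus $\log\log N$, not $C_2\log N$ versus $6\log N$ with $C_2 > 6$; the latter cannot be made to work. Your upper-bound chain via Faltings height and Silverman's comparison is fine and essentially reproduces the paper's Corollary~\ref{cor:lowerbound}, and your reduction to bounding $N$ via $\Phi_N$ is correct --- it is only the lower bound strategy that needs to be replaced by the small-place analysis.
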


To be precise, we give an effective bound for the degree of the minimal isogeny. This leaves
only finitely many possibilities for $j$.

We can also look a slightly different problem and fix the $j$--invariant $\alpha$ of an elliptic
curve with complex multiplication. We have the same result as stated in the following theorem.
\begin{theorem}
	
	Assume $\alpha$ is the $j$--invariant of an elliptic curve with CM.
	Let $j_0$ be the $j$--invariant of an elliptic curve without CM.
	Then there are at most finitely many $j$--invariants $j$ of elliptic curves that are isogenous to
	an elliptic curve corresponding to $j_0$ and such that $j-\alpha$ is an algebraic unit.

\end{theorem}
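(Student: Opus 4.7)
The plan is to bound effectively the degree $N$ of a minimal isogeny between the elliptic curves with $j$-invariants $j_0$ and $j$; once such a bound is in hand, the relation $\Phi_N(j, j_0) = 0$ leaves only finitely many possibilities for $j$.

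Since $\alpha$ is a singular modulus it is an algebraic integer, and together with the hypothesis that $j - \alpha$ is an algebraic unit this forces $j$ to be an algebraic integer as well, so formula \eqref{eq:alg_int_height} applies to $j$. I would then establish the upper bound $h(j) \le c_1(j_0) \log N + c_2(j_0)$ by invoking the classical modular polynomial $\Phi_N(X, Y)$, of $X$-degree $\psi(N) = N\prod_{p \mid N}(1 + 1/p)$ and with coefficients of logarithmic height $O(\log N)$; combined with \eqref{eq:ht_sum_lower_bound} this also yields $h(j - \alpha) = O(\log N)$.

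The core of the argument is to confront this upper bound with the unit hypothesis via the product formula $\sum_\sigma \log|\sigma(j) - \sigma(\alpha)| = 0$. Splitting by sign and comparing with \eqref{eq:alg_int_height}, one obtains
\[
\sum_{|\sigma(j) - \sigma(\alpha)| < 1}\bigl(-\log|\sigma(j) - \sigma(\alpha)|\bigr) \;=\; D\,h(j-\alpha) \;=\; D\,h(j) + O_\alpha(D),
\]
where $D = [K:\Q]$. Thus the archimedean ``negative mass'' grows at least as fast as $D\,h(j)$, and the strategy is to bound it geometrically from above, which will force $h(j)$---and hence $N$---to be small. To this end I would lift each embedding, writing $\sigma(j_0) = j(\tau_\sigma)$ with $\tau_\sigma$ in a fixed compact subset of the standard fundamental domain (uniformly in $\sigma$, since $j_0$ is fixed and non-CM), and $\sigma(j) = j(M_\sigma \tau_\sigma)$ for some $M_\sigma \in \Mat_2(\Z)$ of determinant $N$; the smallness of $|\sigma(j) - \sigma(\alpha)|$ then forces $M_\sigma \tau_\sigma$ to lie close to some CM point $\tau_{\alpha, \sigma}$ lifting $\sigma(\alpha)$.

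The main obstacle is a quantitative isolation statement: an effective lower bound of the shape $|M_\sigma \tau_\sigma - \tau_{\alpha, \sigma}| \ge c_3 N^{-c_4}$ for the hyperbolic distance between a Hecke translate of the non-CM point $\tau_\sigma$ and any CM point of fixed discriminant. Combined with the local derivative estimate for Klein's modular function this produces $|\sigma(j) - \sigma(\alpha)| \ge c_5 N^{-c_6}$ and the uniform bound $-\log|\sigma(j) - \sigma(\alpha)| = O(\log N)$. Together with an effective count---sharper than the naive bound by $D$---of the embeddings producing such close encounters, obtained from equidistribution of Hecke orbits of the non-CM base point away from singular points, this caps the displayed sum by a quantity that grows in $N$ strictly more slowly than $D\,h(j) + O_\alpha(D)$ would require, yielding the desired effective bound on $N$.
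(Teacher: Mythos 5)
Your overall strategy---controlling the archimedean sum $\sum_\sigma(-\log|\sigma(j-\alpha)|)$ through a lower bound on each $|\sigma(j-\alpha)|$ coming from Diophantine approximation, and a count of small conjugates---has the same shape as the paper's. But there is a decisive gap: your argument produces only \emph{upper} bounds on $h(j)$, whereas the contradiction that bounds $N$ requires a \emph{lower} bound on $h(j)$ that grows with $N$. The archimedean sum you display is by definition equal to $D\,h(j-\alpha)$; your geometric argument caps it from above, giving at best $h(j) \le f(N)$ for some slowly growing $f$. Your modular-polynomial estimate $h(j) \le c_1(j_0)\log N + c_2(j_0)$ is also an upper bound. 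Two upper bounds cannot contradict one another, so nothing forces $N$ to be small. The paper's essential missing ingredient is Corollary \ref{cor:lowerbound}: combining Autissier's average for Faltings heights in the isogeny class with Serre's open image theorem (made effective by Lombardo) to bound the Galois orbit of the kernel from below yields
$h(j) \ge h(j_0) + 6\log N - 84[\GL_2(\Z/N\Z):\rho_N(G_K)]\log\log N - O(1)$,
which is eventually $\ge c\log N$. Only against this lower bound does the geometric upper bound (made $O(\log\log N)$ after choosing $\varepsilon = (\log N)^{-12}$) produce a contradiction for large $N$.

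Two secondary points. The isolation statement $|M_\sigma\tau_\sigma - \tau_{\alpha,\sigma}| \ge c_3 N^{-c_4}$ you posit is stronger than what is actually available: David's theorem on linear forms in elliptic logarithms, which is what the paper invokes in Lemma \ref{lem:linform}, gives only a quasi-polynomial bound of the shape $\exp(-c(\log N)^6)$, and that is all the proof has to work with. It nonetheless suffices because the count of embeddings with $|\sigma(j-\alpha)|$ small is made tiny relative to the degree: Proposition \ref{prop:clusterbound} gives $O(\sqrt{N}\sigma_0(N) + \sqrt\varepsilon\,\psi(N))$ nearby Hecke translates, and dividing by the orbit size $B = [K^\Phi:K] \gtrsim \psi(N)/[\GL_2(\Z/N\Z):\rho_N(G_K)]$---which again uses Serre/Lombardo via Lemma \ref{lem:serre_index_bound}---converts this into $O(N^{-1/10} + \sqrt\varepsilon)$. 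Your ``effective count sharper than the naive bound by $D$'' requires this same mechanism, including keeping track of the field $K^\Phi$ over which $j$ is actually defined; appealing to equidistribution of Hecke orbits alone does not produce an effective comparison between the count and the degree $D$.
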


Again we give a bound on the degree of the minimal isogeny and our bounds are effective.

\section{Isogenous points in the fundamental domain}

Recall that $j_0$ is the $j$--invariant of a fixed elliptic curve without CM.
Assume $j(\tau_0) = j_0$ for $\tau_0 \in \F$.
For any number field $K$ and any embedding $\sigma\colon K \hookrightarrow \C$ there is a $\tau_0^\sigma \in \F$
such that $j(\tau_0^\sigma) = \sigma(j_0)$.

For $\xi \in \bar\F$ and $\tau \in \H$
we define the sets
\begin{displaymath}
	\Sigma(\xi, \varepsilon) = \left\{ \tau \in \F; \lvert j(\tau) - j(\xi) \rvert < \varepsilon \right\}
\end{displaymath}
and
\begin{displaymath}
	\Gamma(\xi, \varepsilon) = \left\{ \sigma\colon K \rightarrow \C; \tau_0^\sigma \in \Sigma(\xi,\varepsilon) \right\}.
\end{displaymath}
We will write $\Sigma_\varepsilon$ and $\Gamma_\varepsilon$ for $\Sigma(\zeta,\varepsilon)$ and $\Gamma(\zeta,\varepsilon)$,
respectively, where $\zeta = e^{2\pi i/6}$.

We want to give an explicit bound for the number of elements in the Galois orbit of $j_0$ satisfying the
condition above. First, we will bound the number of points in the Hecke orbit, and then use a
result of Lombardo to estimate the total number. Two (equivalence classes of isomorphic) elliptic
curves are in the same Hecke orbit if they are isogenous.

We will need the following counting lemma. We translate points in the upper
half--plane into the fundamental domain with matrices in $\sl2z$, and thus get restrictions
on then entries of the matrices.

\begin{lem}\label{lem:ellipse}
	Let $\xi \in \bar\F$ and $\varepsilon \in (0,\frac{\sqrt{3}}{3|\xi|+2}]$. Let $\tau \in \H$
	satisfy $|\tilde\tau - \xi| \le \varepsilon$, where $\tilde\tau \in \F$ is in the $\sl2z$--orbit of $\tau$.
	Pick
	\begin{displaymath}
		\gamma = \begin{pmatrix}a&b\\c&d\end{pmatrix} \in \sl2z
	\end{displaymath}
	such that $\gamma\tau = \tilde\tau$.
	Then there exist $\nu \in \{\pm 1\}$ such that
	\begin{equation}\label{eq:matrixentries}
		\left\lvert a^2 + \nu 2 |\Re(\xi)| ac + |\xi|^2 c^2 - \frac{\Im(\xi)}{\Im(\tau)}\right\rvert
		\le 7\frac{4|\xi|+1}{\sqrt 3} |\xi|^2 \frac{\varepsilon^{1/2}}{\Im(\tau)},
	\end{equation}
	and
	\begin{equation}\label{eq:bound_a2_c2}
		\max\left\{a^2,c^2\right\} \le \frac{4|\xi|+1}{\sqrt 3} \frac 1{\Im(\tau)}.
	\end{equation}
	Moreover, we have
	\begin{displaymath}
		|d| \le |c| |\Re(\tau)| + \frac{4|\xi|+1}{\sqrt{3}}
	\end{displaymath}
	and
	\begin{displaymath}
		|b| \le |a| |\Re(\tau)| + \frac{4|\xi|+1}{\sqrt 3}.
	\end{displaymath}
\end{lem}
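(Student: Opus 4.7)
The plan is to extract all four bounds from the two elementary identities
\[ |c\tau+d|^2 = \frac{\Im(\tau)}{\Im(\tilde\tau)}, \qquad |a-c\tilde\tau|^2 = \frac{\Im(\tilde\tau)}{\Im(\tau)}, \]
the first being $\Im(\gamma\tau) = \Im(\tau)/|c\tau+d|^2$ and the second the same formula applied to $\gamma^{-1}\tilde\tau = \tau$.

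I would first establish \eqref{eq:bound_a2_c2}. The inequality $|c\tau+d|^2 \ge c^2\Im(\tau)^2$ combined with the first identity gives $c^2 \le 1/(\Im(\tau)\Im(\tilde\tau))$, and since $\tilde\tau \in \F$ forces $\Im(\tilde\tau) \ge \sqrt{3}/2$, we get $c^2 \le 2/(\sqrt{3}\,\Im(\tau))$, which lies inside the stated bound because $|\xi| \ge \sqrt{3}/2$. For $a^2$, expanding the second identity and using $|\Re(\tilde\tau)| \le 1/2$ yields $|a| \le |c|/2 + \sqrt{\Im(\tilde\tau)/\Im(\tau)}$; squaring and substituting the bound on $|c|$ gives $a^2 \Im(\tau) \le \Im(\tilde\tau) + 1 + 1/(4\Im(\tilde\tau))$, whose maximum over $\Im(\tilde\tau) \in [\sqrt{3}/2,|\xi|+\varepsilon]$ is at the right endpoint. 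The hypothesis $\varepsilon \le \sqrt{3}/(3|\xi|+2)$ is calibrated precisely so that this is bounded by $(4|\xi|+1)/\sqrt{3}$. This step is the delicate part: a cruder estimate such as $|a| \le |c||\tilde\tau| + \sqrt{\Im(\tilde\tau)/\Im(\tau)}$ loses a factor of $|\xi|$ and fails for large $|\xi|$, so one really must separate $\Re(\tilde\tau)$ and $\Im(\tilde\tau)$.

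For $|d|$ I would use $(c\Re(\tau)+d)^2 \le |c\tau+d|^2 = \Im(\tau)/\Im(\tilde\tau)$ to obtain $|d| \le |c||\Re(\tau)| + \sqrt{\Im(\tau)/\Im(\tilde\tau)}$ and split on $c$: if $c=0$ then $|d|=1$, while if $c \ne 0$ then $c^2 \Im(\tau)\Im(\tilde\tau) \le 1$ forces $\Im(\tau)/\Im(\tilde\tau) \le 4/3$, so the residual is at most $2/\sqrt{3} \le (4|\xi|+1)/\sqrt{3}$. The same argument applied to $|a\tau+b|^2 = |\tilde\tau|^2 \Im(\tau)/\Im(\tilde\tau)$ handles $|b|$, after noting $|\tilde\tau|^2/\Im(\tilde\tau) \le 2|\tilde\tau|/\sqrt{3} \le 2(|\xi|+\varepsilon)/\sqrt{3}$ for $\tilde\tau \in \F$ with $|\tilde\tau| \ge 1$.

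Finally, for \eqref{eq:matrixentries} I would choose $\nu \in \{\pm 1\}$ so that $\nu|\Re(\xi)| = -\Re(\xi)$; the left-hand side then equals $|a - c\xi|^2 - \Im(\xi)/\Im(\tau)$, with $\xi$ regarded as a complex number. Writing $\xi = \tilde\tau - \delta$ with $|\delta| \le \varepsilon$ and expanding,
\[ |a-c\xi|^2 - \frac{\Im(\xi)}{\Im(\tau)} = \frac{\Im(\tilde\tau)-\Im(\xi)}{\Im(\tau)} + 2\Re\bigl((a-c\tilde\tau)\overline{c\delta}\bigr) + c^2|\delta|^2. \]
The three terms are bounded by $\varepsilon/\Im(\tau)$, by $2|c|\sqrt{\Im(\tilde\tau)/\Im(\tau)}\,\varepsilon \le 2\varepsilon/\Im(\tau)$ (using $|c| \le 1/\sqrt{\Im(\tau)\Im(\tilde\tau)}$), and by $c^2\varepsilon^2 \le 2\varepsilon^2/(\sqrt{3}\,\Im(\tau))$. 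The total error is $O(\varepsilon/\Im(\tau))$, which since $\varepsilon \le 1$ fits comfortably inside the stated $\varepsilon^{1/2}/\Im(\tau)$ bound.
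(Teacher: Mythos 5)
Your proof is correct, and for the central inequality \eqref{eq:matrixentries} it takes a genuinely different and considerably shorter path than the paper. The paper handles \eqref{eq:matrixentries} by introducing $\delta_1 = \Im(\gamma\tau)^{-1}$ and $\delta_2 = \Im(S\gamma\tau)^{-1}$, rewriting \eqref{eq:cddependence} and \eqref{eq:abdependence} as two quadratics in $Y := \Re(\tau) + b/a$, computing their resultant, and then expanding $\delta_1 = A^{-1} + \varepsilon_1$, $\delta_2 = R^2 A^{-1} + \varepsilon_2$. The error terms end up multiplying as a product of two linear factors in $(a,c)$; since the paper does not know a priori which factor is small, it chooses $\nu$ to pick the smaller one and bounds its square by the product, which is why a $\sqrt{\varepsilon}$ appears in the final estimate (and why there are three separate cases $c=0$, $a=0$, $ac\neq 0$). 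You instead observe that the specific choice $\nu = -\operatorname{sign}\Re(\xi)$ turns the left-hand side of \eqref{eq:matrixentries} into $\lvert a - c\xi\rvert^2 - \Im(\xi)/\Im(\tau)$, use the exact identity $\lvert a - c\tilde\tau\rvert^2 = \Im(\tilde\tau)/\Im(\tau)$ coming from $\gamma^{-1}\tilde\tau = \tau$, and perturb $\tilde\tau$ to $\xi$. This kills the resultant computation and the case split entirely, and it produces an error that is linear in $\varepsilon$ rather than of order $\varepsilon^{1/2}$, so your version of \eqref{eq:matrixentries} is actually stronger than the one stated (the weakening to $\varepsilon^{1/2}$ then costs nothing since $\varepsilon \le 1$). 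For \eqref{eq:bound_a2_c2} and the bounds on $b$, $d$ the two proofs are essentially the same in content but organized differently: the paper works through $\delta_1, \delta_2$ and the arc inequality $B/r \ge \sqrt{3}/2$, whereas you split $\lvert a - c\tilde\tau\rvert^2$ into real and imaginary contributions and exploit $\lvert\Re(\tilde\tau)\rvert \le 1/2$ directly; both hinge on the same endpoint computation at $\lvert\xi\rvert = \sqrt{3}/2$, where the hypothesis $\varepsilon \le \sqrt{3}/(3\lvert\xi\rvert+2)$ is tight. Your observation that the crude substitution $\lvert\Re(\tilde\tau)\rvert \le \lvert\tilde\tau\rvert$ would lose a factor of $\lvert\xi\rvert$ is accurate and identifies exactly why the careful split is needed. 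Altogether your argument is a clean simplification of the paper's proof of this lemma.
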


\noindent The lemma tells us, that the first column of $\gamma$, considered as a point in the
plane, is close to a conic section.
Since
\begin{displaymath}
	(2\nu |\Re(\xi)|)^2 - 4|\xi|^2 = 4(\Re(\xi)^2 - \Re(\xi)^2- \Im(\xi)^2) = -4\Im(\xi) < 0
\end{displaymath}
the equation actually defines an ellipse.
The ellipse is defined in terms of $\xi$ and $\tau$.

\begin{proof}
	Let $R = |\xi|$ and $A = \Im(\xi)$.
	Moreover write $\tau = x + iy$. We have
	\begin{displaymath}
		\Im(\gamma\tau) = \frac{\Im\tau}{(cx+d)^2+c^2y^2} \ge A - \varepsilon
	 	\ge A - \frac{\sqrt{3}}{3R+2} \ge \frac{\sqrt{3}}{4R+1}
	\end{displaymath}
	by definition of $\varepsilon$ and $A \ge \frac{\sqrt{3}}2$.
	Define $\delta_1 := \Im(\gamma\tau)^{-1}$. Then $\delta_1 \le 1/(A-\varepsilon)$ and
	\begin{equation}\label{eq:cddependence}
		(cx+d)^2 + c^2y^2 = \delta_1 y.
	\end{equation}
	This yields $c^2 \le \delta_1/y \le (A-\varepsilon)^{-1}y^{-1}$, which implies the bound on $c^2$, and
	\begin{displaymath}
		|cx+d||c|y \le \frac 12 \left( (cx+d)^2 + c^2y^2\right) = \frac 12 \delta_1 y \le \frac 1{2(A-\varepsilon)} y.
	\end{displaymath}
	Further we get
	\begin{equation}\label{eq:dequation}
		|cx+d||c| \le \frac1{2(A-\varepsilon)} \le \frac{4R+1}{\sqrt{3}}
	\end{equation}
	and hence
	\begin{displaymath}
		|d| \le |c||\Re(\tau)| + \frac{4R+1}{\sqrt{3}}
	\end{displaymath}
	if $c \not= 0$.
	Thus the inequality for $d$ in the statement is true in the case $c \not= 0$. But if $c = 0$,
	then $d = \pm 1$, and $|d| = 1 \le \frac 5{\sqrt{3}} \le \frac{4R +1}{\sqrt{3}}$. Thus, the
	inequality for $d$ holds in both cases.

	Put $\gamma' = \begin{pmatrix}0&-1\\1&0\end{pmatrix}\gamma$.
	Then $\gamma'\tau = -\frac 1{\tilde\tau}$. Define $\delta_2 := \Im(\gamma'\tau)^{-1}$,
	i.e.
	\begin{equation}\label{eq:abdependence}
		(ax+b)^2 + a^2y^2 = \delta_2y.
	\end{equation}
	We put $r = |\tilde\tau|$ and $B = \Im(\tilde\tau)$.
	Now by the general rule of transformation of the imaginary part under fractional linear
	transformations
	\begin{displaymath}
		\delta_2 = \Im(\gamma'\tau)^{-1} = \Im\left(-\frac 1{\tilde\tau}\right)^{-1} 
		= \left( \frac{\Im(\tilde\tau)}{|\tilde\tau|^2} \right)^{-1} = \frac{r^2}B.
	\end{displaymath}
	We remark that $B/r = \Im(\tilde\tau/|\tilde\tau|) \ge \sqrt{3}/2$ since
	$\tilde\tau/r \in \bar\F$, and similarly $A/R \ge \sqrt{3}/2$.
	This implies
	\begin{displaymath}
		\delta_2 \le \frac 2{\sqrt{3}} r \le \frac 2{\sqrt{3}}(R + \varepsilon) \le \frac{2R+1}{\sqrt{3}}.
	\end{displaymath}
	We proceed as before with the bound on $d$ and $c^2$. From \eqref{eq:abdependence}
	we obtain $a^2 \le \delta_2/y \le (3R+1)/(\sqrt{3}y)$,
	which is the desired inequality of the statement. Moreover, we obtain
	\begin{equation}\label{eq:bequation}
		|ax+b||a| \le \delta_2/2 \le (R+1)/\sqrt{3}
	\end{equation}
	and hence
	\begin{displaymath}
		|b| \le |a||\Re(\tau)| + \frac{R+1}{\sqrt{3}},
	\end{displaymath}
	whenever $a \not= 0$. Again, if $a=0$, then $|b| = 1 \le \frac{4R+1}{\sqrt{3}}$, as claimed.

	It remains to prove \eqref{eq:matrixentries}.
	We deal with the case $c=0$ first. Then $a=d=\pm 1$ and $y = \delta_1^{-1} = \Im\tilde\tau = \Im\tau$,
	and thus $|y-A| \le |\tilde\tau - \xi| \le \varepsilon$.
	This implies
	\begin{displaymath}
		\left\vert \frac 1A - \frac 1y \right\vert \le \frac \varepsilon{Ay} \le \frac 1A \frac{\varepsilon^{1/2}}y.
	\end{displaymath}
	Multiplying by $A$ shows that Equation \eqref{eq:matrixentries} is true for any value of $\nu$.
	Now assume $c \not= 0$.
	We want to prove
	$|\delta_2 - \frac{|\xi|^2}{\Im \xi}| = |\delta_2 - \frac{|\xi|^2}{A}| \ll \varepsilon$.
	We compute
	\begin{equation}\label{eq:inequalityfora0}
		\begin{split}
		\left\vert\delta_2 - \frac{R^2}A\right\vert &= \left\vert\frac{r^2}B - \frac{R^2}A\right\vert
		= \left\vert\frac{R^2B - r^2A}{AB}\right\vert	\\
		&= \left\vert\frac{R^2B - BRr + BRr - ARr + ARr - r^2A}{AB}\right\vert	\\
		&\le RB\left\vert\frac{r-R}{AB}\right\vert + Rr\left\vert\frac{A-B}{AB}\right\vert
		+ rA\left\vert\frac{r-R}{AB}\right\vert	\\
		&\le \frac 2{\sqrt 3}\varepsilon + \frac 43 \varepsilon + \frac 2{\sqrt 3}\varepsilon	\\
		&\le 4\varepsilon,
		\end{split}
	\end{equation}
	where we have used $|R - r| = ||\xi|-|\tilde\tau|| \le |\xi - \tilde\tau| \le \varepsilon$
	and $|A - B| = |\Im(\xi) - \Im(\tilde\tau)| \le |\xi - \tilde\tau| \le \varepsilon$ in the
	second last inequality.

	Suppose $a=0$ for now. Then $b=-c=\pm 1$ and $y = \delta_2^{-1}$. Multiplying \eqref{eq:inequalityfora0} by
	$\Im(\xi) = A$ shows \eqref{eq:matrixentries} as the following argument shows.
	We have $\delta_2^{-1} = \Im(\gamma'\tau) = \frac{\Im(\tau)}{|b|^2} = \Im(\tau)$ by the
	usual transformation formula for the imaginary part of the action of $\sl2z$ by fractional
	linear transformations.
	Thus
	\begin{align*}
		|A\delta_2 - R^2| = \left\vert R^2 - \frac A{\Im(\tau)}\right\vert
		= \left\vert |\xi|^2 - \frac{\Im(\xi)}{\Im(\tau)}\right\vert \le 4A\varepsilon \le 4R \varepsilon^{1/2}.
	\end{align*}
	We have $\Im(\tau) \le 2/\sqrt{3}$ since $a=0$ and $\gamma$ translates $\tau$ into the fundamental domain.
	Therefore, the inequality remains true after multiplying the right--hand side by $2/\sqrt{3}\Im(\tau)^{-1}$.
	This shows equation \eqref{eq:matrixentries}.

	Finally, assume $ac \not=0$.
	Put $X := x +d/c$ and $Y := x + b/a$. Consider the difference of the two
	\begin{displaymath}
		X - Y = \left( x + \frac dc \right) - \left( x + \frac ba \right) = \frac 1{ac}.
	\end{displaymath}
	If we divide \eqref{eq:cddependence} by $c^2$ and rewrite the result in terms of $Y$ we get
	\begin{displaymath}
		0 = X^2 + y^2 - \frac{\delta_1 y}{c^2} = \left( Y + \frac 1{ac} \right)^2 + y^2 - \frac{\delta_1 y}{c^2}
		= Y^2 + \frac 2{ac} Y + \frac 1{(ac)^2} + y^2 - \frac{\delta_1 y}{c^2}.
	\end{displaymath}
	Similarly, if we divide \eqref{eq:abdependence} by $a^2$ we find
	\begin{displaymath}
		0 = Y^2 + y^2 - \frac{\delta_2 y}{a^2}
	\end{displaymath}
	Computing the resultant of the last two displays as polynomials in $Y$, and multiplying the result by
	$(ac)^4$ to kill the denominators,  gives us the expression
	\begin{equation}\label{eq:resultant}
		a^4y^2\delta_1^2 - 2a^2c^2y^2\delta_1\delta_2 + c^4y^2\delta_2^2 + 4a^2c^2y^2 - 2a^2y\delta_1 - 2c^2y\delta_2 + 1 = 0.
	\end{equation}
	Now write $\delta_1 = \frac 1A + \varepsilon_1$ and $\delta_2 = \frac{R^2}A + \varepsilon_2$.
	Then
	\begin{displaymath}
		|\varepsilon_1| = \left\vert\delta_1 - \frac 1A\right\vert
		= \left\vert \frac{A-\Im(\tilde\tau)}{A\Im(\tilde\tau)} \right\vert \le \frac 2{\sqrt{3}A} \varepsilon
	\end{displaymath}
	since $\Im(\tilde\tau) \ge \sqrt{3}/2$ and $|\Im(\xi)-\Im(\tilde\tau)| \le |\xi-\tilde\tau| \le \varepsilon$.
	Also $|\varepsilon_2| \le 4\varepsilon$ by \eqref{eq:inequalityfora0}.
	Put $\sigma = \Re(\xi)$. If we substitute these expressions for $\delta_1$ and $\delta_2$ in
	\eqref{eq:resultant} we obtain
	\begin{equation}\label{eq:resultant_times_const}
		\begin{split}
			0 =& a^4y^2\left( \frac 1A + \varepsilon_1 \right)^2
				- 2a^2c^2y^2\left( \frac 1A + \varepsilon_1 \right)\left( \frac{R^2}A + \varepsilon_2 \right)	\\
			&+ c^4y^2\left( \frac{R^2}A + \varepsilon_2 \right)^2 + 4a^2c^2y^2
				- 2a^2y\left( \frac 1A + \varepsilon_1 \right) - 2c^2y\left( \frac{R^2}A + \varepsilon_2 \right) + 1.
		\end{split}
	\end{equation}
	After multiplying the equation by $A^2/y^2$ the terms that do not include $\varepsilon_1$ and $\varepsilon_2$
	are given by
	\begin{align*}
		a^4 - 2a^2c^2A \frac{R^2}A &+ c^4A^2 \frac{R^4}{A^2} + 4a^2c^2A^2
		- 2a^2\frac Ay - 2c^2\frac{R^2}A \frac{A^2}y + \frac{A^2}{y^2}	\\
		&= a^4 - 2a^2c^2 R^2 + c^4R^4 + 4a^2c^2A^2 - 2a^2\frac Ay - 2c^2R^2 \frac Ay + \frac{A^2}{y^2}	\\
		&= a^4 - 2a^2c^2 R^2 + c^4R^4 + 4a^2c^2(R^2 - \sigma^2) - 2a^2\frac Ay - 2c^2R^2 \frac Ay + \frac{A^2}{y^2}	\\
		&= \left( a^2 - 2\sigma ac + R^2c^2 - \frac Ay \right) \left( a^2 + 2\sigma ac + R^2c^2 - \frac Ay \right)
	\end{align*}
	The terms that involve $\varepsilon_1$ and $\varepsilon_2$ in  \eqref{eq:resultant_times_const} after
	multiplying it by $A^2/y^2$ are given by
	\begin{align*}
		A^2 \left( \left(a^4 \frac 2A - 2a^2c^2 \frac{R^2}A - 2a^2 \frac 1y \right) \varepsilon_1\right.
		&+ \left(-2a^2c^2\frac 1A + 2c^4 \frac{R^2}A - 2c^2 \frac 1y\right) \varepsilon_2	\\
		&+ \left.\left(a^4\varepsilon_1^2 - 2a^2c^2\varepsilon_1\varepsilon_2 + c^4\varepsilon_2^2 \right)
			\vphantom{\frac{R^2}A}\right)	\\
		= A^2 \left( a^2\left(a^2 \frac 2A - 2c^2 \frac{R^2}A - 2 \frac 1y \right) \varepsilon_1\right.
		&+ c^2\left(-2a^2\frac 1A + 2c^2 \frac{R^2}A - 2 \frac 1y\right) \varepsilon_2	\\
		&+ \left.\left(a^2\varepsilon_1 - c^2\varepsilon_2 \right)^2
			\vphantom{\frac{R^2}A}\right).
	\end{align*}
	Putting everything together in one equation again we obtain
	\begin{align*}
		\lefteqn{\left( a^2 - 2\sigma ac + R^2c^2 - \frac Ay \right) \left( a^2 + 2\sigma ac + R^2c^2 - \frac Ay \right)}	\\
		&&= -A^2 \left( 2a^2\left(a^2 \frac 1A - c^2 \frac{R^2}A - \frac 1y \right) \varepsilon_1\right.
		+ 2c^2\left(-a^2\frac 1A + c^2 \frac{R^2}A - \frac 1y\right) \varepsilon_2	\\
		&&+ \left.\left(a^2\varepsilon_1 - c^2\varepsilon_2 \right)^2
			\vphantom{\frac R{\sin\varphi}}\right).
	\end{align*}
	We are now ready to prove \eqref{eq:matrixentries}. Choose $\nu \in \{\pm 1\}$ such that
	\begin{displaymath}
		\left\vert a^2 + 2\nu|\sigma| ac + R^2c^2 - \frac Ay \right\vert
		\le \left\vert a^2 - 2\nu|\sigma| ac + R^2c^2 - \frac Ay \right\vert.
	\end{displaymath}
	Then
	\begin{equation}\label{eq:bounding_ellipse}
		\begin{split}
			\left\vert a^2 + 2\nu|\sigma| ac + R^2c^2 - \frac Ay \right\vert^2
			\le& \left\vert a^2 - 2\sigma ac + R^2c^2 - \frac Ay \right\vert \left\vert a^2 + 2\sigma ac + R^2c^2 - \frac Ay \right\vert	\\
			\le& A^2 \max\left\{a^2,c^2\right\} \left( 2 \left( a^2\frac 1A + c^2\frac{R^2}A + \frac 1y\right) |\varepsilon_1|\right.	\\
			&+ 2 \left( a^2\frac 1A + c^2\frac{R^2}A + \frac 1y\right) |\varepsilon_2|	\\
			&\left.+ \max\{a^2,c^2\}\left(|\varepsilon_1| + |\varepsilon_2|\right)^2 \vphantom{\left(\frac 1A\right)}\right).
		\end{split}
	\end{equation}
	Note that $1/A \le 2/\sqrt{3}$ and $R^2/A \le 2R/\sqrt{3}$ as remarked on page \pageref{eq:abdependence}.
	We also have acquired a bound for $\max\{a^2,c^2\}$ in the beginning of the proof displayed in \eqref{eq:bound_a2_c2}.
	Therefore,
	\begin{align*}
		a^2\frac 1A + c^2 \frac{R^2}A + \frac 1y &\le \frac{4R+1}{\sqrt{3}}\frac 1y\frac 1A + \frac{4R+1}{\sqrt{3}}\frac 1y\frac{R^2}{A} + \frac 1y	\\
		&\le \frac{4R+1}{\sqrt{3}}\frac 1y\left(\frac 1A + \frac{R^2}{A} + \frac 2{\sqrt{3}}\right)	\\
		&\le \frac{4R+1}{\sqrt{3}} \frac{6R}{\sqrt{3}}\frac 1y.
	\end{align*}
	Using the bounds for $\varepsilon_1$ and $\varepsilon_2$ we get
	\begin{align*}
		2\left(a^2\frac 1A + c^2\frac{R^2}A + \frac 1y\right)|\varepsilon_1|
		&\le 2\frac{4R+1}{\sqrt{3}} \frac{6R}{\sqrt{3}} \frac 1y\frac{2}{\sqrt{3}A}\varepsilon
		\le 10\frac{4R+1}{\sqrt{3}} \frac \varepsilon{y}, \\
		2\left(a^2\frac 1A + c^2\frac{R^2}A + \frac 1y\right)|\varepsilon_2|
		&\le 2\frac{4R+1}{\sqrt{3}} \frac{6R}{\sqrt{3}} \frac 1y 4\varepsilon
		\le 28R\frac{4R+1}{\sqrt{3}} \frac \varepsilon{y}
	\end{align*}
	and
	\begin{displaymath}
		\left(|\varepsilon_1| + |\varepsilon_2|\right)^2
		\le \left(\frac 1A \frac 2{\sqrt 3}\varepsilon + 4\varepsilon\right)^2
		\le \varepsilon^2\left(\frac 1A \frac 2{\sqrt 3} + 4\right)^2
		\le \varepsilon^2\left(\frac 4{3} + 4\right)^2
		\le 29 \varepsilon^2
		\le 11 \varepsilon
	\end{displaymath}
	since $\varepsilon \le \sqrt{3}/5$.
	Using these inequalities for \eqref{eq:bounding_ellipse} and applying \eqref{eq:bound_a2_c2} again we obtain
	\begin{align*}
		\left\vert a^2 + 2\nu|\sigma| ac + R^2c^2 - \frac Ay \right\vert^2
		&\le A^2 \max\left\{a^2,c^2\right\} \left( 38R\frac{4R+1}{\sqrt{3}} \frac\varepsilon y
		+ 11\max\{a^2,c^2\}\varepsilon \right)	\\
		&\le 49 A^2 R \left(\frac{4R+1}{\sqrt{3}}\right)^2 \frac \varepsilon{y^2}.
	\end{align*}
	Taking the square--root on both sides gets us
	\begin{align*}
		\left\vert a^2 + 2\nu|\sigma| ac + R^2c^2 - \frac Ay \right\vert
		&\le 7 A R^{1/2} \left(\frac{4R+1}{\sqrt{3}}\right) \frac{\varepsilon^{1/2}}{y}.
	\end{align*}
	Using $A \le R$ and $y = \Im(\tau)$ we get
	\begin{align*}
		\left\vert a^2 + 2\nu|\sigma| ac + R^2c^2 - \frac Ay \right\vert
		&\le 7R^2 \left(\frac{4R+1}{\sqrt 3}\right) \frac{\varepsilon^{1/2}}{\Im(\tau)}.
	\end{align*}
	This proves \eqref{eq:matrixentries}.
\end{proof}

Note that the estimates might be improved slightly, especially when $\xi = \zeta$ or $\xi = \zeta^2$ with
$\zeta = e^{2\pi i /6}$.\\
We want to use the last lemma to prove the following proposition.
\begin{prop}\label{prop:clusterbound}
	Let $N$ be an integer, and let $E_0$ be an elliptic curve, and 
	$\xi \in \bar\F$. Further, assume that $0 \le \varepsilon \le (100^{-1}|\xi|^{-3}\Im(\xi))^2$.
	Then the number of $\tau \in \bar\F$
	with $|\xi - \tau| \le \varepsilon$ and such that $E_0$ is $N$--isogenous to a curve
	corresponding to $j(\tau)$ is bounded by
	\begin{displaymath}
		10^7|\tau_0||\xi|^5 \left( \sqrt{N} \sigma_0(N) + \sqrt\varepsilon \psi(N) \right).
	\end{displaymath}
\end{prop}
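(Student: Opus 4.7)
The plan is to reduce the count to an integer-point count on an ellipse annulus via Lemma \ref{lem:ellipse}. Since $E_0$ has no complex multiplication, the stabilizer of $\tau_0$ in $\GL_2(\Q)_+/\Q^\times$ is trivial, and hence each elliptic curve $N$-isogenous to $E_0$ corresponds to a unique (up to sign) matrix $M = \begin{pmatrix}p&q\\r&s\end{pmatrix} \in \Mat_2(\Z)$ with $\det M = N$ and $M\tau_0 \in \bar\F$. The quantity to bound is therefore the number of such $M$ satisfying $|M\tau_0 - \xi| \le \varepsilon$.

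First I would invoke the standard Hecke decomposition $M = \gamma D$, where $\gamma \in \sl2z$ is the unique $\sl2z$-translator of $D\tau_0$ into $\bar\F$ and $D = \begin{pmatrix}a&b\\0&d\end{pmatrix}$ with $ad = N$ and $0 \le b < d$. Applying Lemma \ref{lem:ellipse} with $\tau := D\tau_0$ -- using $\Im(D\tau_0) = N\Im\tau_0/d^2$ -- yields an ellipse constraint on the first column $(A,C)$ of $\gamma$, together with bounds on the other entries. The key observation is that $(p,r) = a\,(A,C)$, so scaling the ellipse inequality by $a^2$ eliminates the $d$-dependence and produces a \emph{single} annulus condition on $(p,r) \in \Z^2$,
\begin{displaymath}
\left\lvert p^2 + 2\nu |\Re\xi|\, pr + |\xi|^2 r^2 - \frac{N\, \Im\xi}{\Im\tau_0}\right\rvert \le C_1(|\xi|)\, \frac{N \sqrt\varepsilon}{\Im\tau_0},
\end{displaymath}
which is independent of the divisor $d$.

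A Gauss-style lattice-point count then bounds the number of admissible first columns $(p,r)$ by the area of the annulus (of order $|\xi|^3 N\sqrt\varepsilon/(\Im\tau_0\Im\xi)$) plus a boundary term of order $\sqrt{N/\Im\tau_0}$. For each such $(p,r)$, the second column $(q,s) \in \Z^2$ is constrained both by the line $ps - qr = N$ and by the closeness condition $|M\tau_0-\xi|\le\varepsilon$; parametrizing solutions of the linear equation as $(q,s) = (q_0,s_0) + k(A,C)$ and differentiating $M\tau_0$ in $k$ shows that there are $O(1 + \varepsilon \gcd(p,r))$ integer $(q,s)$ admissible. Summing over $(p,r)$ grouped by $a = \gcd(p,r) \mid N$, and applying divisor identities of the type $\sum_{a\mid N} a\,(N/a) = N\sigma_0(N)$ together with $\psi(N) \ge N$, assembles the two terms of the stated bound.

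The main obstacle will be the careful bookkeeping of constants -- in particular, obtaining exactly $|\tau_0||\xi|^5$ as the multiplicative factor -- through the entire chain of inequalities, and verifying that the hypothesis $\varepsilon \le (100^{-1}|\xi|^{-3}\Im\xi)^2$ is strong enough both to legitimize the application of Lemma \ref{lem:ellipse} (whose own assumption is $\varepsilon \le \sqrt 3/(3|\xi|+2)$) and to control the error terms in the linearization used for the $(q,s)$ count.
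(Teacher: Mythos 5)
Your approach is correct in outline but genuinely different from the paper's, and in one respect cleaner. Both proofs reduce to Lemma \ref{lem:ellipse} plus a lattice-point count on elliptical annuli, but they organize the divisor sum differently.

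The paper fixes a divisor $n$ of $N$ with $n \ge \sqrt N$, observes that all $\tau_M = \begin{smatrix}m&l\\0&n\end{smatrix}\tau_0$ with varying $l$ lie on one horizontal line $\Im\tau = (N/n^2)\Im\tau_0$, applies Lemma \ref{lem:ellipse} and Davenport's theorem to count the $\gamma\in\sl2z$ acting on that line, then bounds (for each $\gamma$) the number of $l$'s landing in the disc, and finally sums over $n$ after treating the range $n\le\sqrt N$ by the trivial bound $\sqrt N\sigma_0(N)$. Your key observation -- that for $M = \gamma D$ with $D = \begin{smatrix}a&*\\0&d\end{smatrix}$ the first column of $M$ is $(p,r) = a\cdot(A,C)$, so multiplying the ellipse inequality at height $\Im(D\tau_0) = a^2\Im\tau_0/N$ by $a^2$ collapses all the divisor-indexed annuli into the single condition $\lvert p^2 + 2\nu\lvert\Re\xi\rvert pr + \lvert\xi\rvert^2 r^2 - N\Im\xi/\Im\tau_0\rvert \ll N\sqrt\varepsilon/\Im\tau_0$ -- is correct and is not in the paper. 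The second-column count $O(1 + \varepsilon\cdot\gcd(p,r))$ also checks out: the consecutive spacing of $M_k\tau_0$ for $(q,s) = (q_0,s_0)+k(A,C)$ near $\xi$ is $\approx \Im\xi/(a\Im\tau_0)$, so the number of admissible $k$ is $O(1 + \varepsilon a\Im\tau_0/\Im\xi)$; this is the origin of the $\lvert\tau_0\rvert$ factor.

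Two bookkeeping points you should be careful about. First, the single lattice count on the merged annulus is not enough on its own: you weight each $(p,r)$ by $1 + \varepsilon\gcd(p,r)$, so you still need the per-$\gcd$ count, which you get by noting that $(p,r)$ with $\gcd(p,r)=a$ (and $a\mid N$) correspond to $(A,C)$ in the annulus scaled by $1/a$, whose area is $\approx N\sqrt\varepsilon/(a^2\Im\tau_0\Im\xi)$ and whose perimeter is $\approx\sqrt{N}/(a\sqrt{\Im\tau_0})$; Davenport still contributes a $+1$ per $a\mid N$. Second, the divisor identity you quote, $\sum_{a\mid N} a\cdot(N/a) = N\sigma_0(N)$, is not the one that appears; the identities actually used are $\sum_{a\mid N} a^{-1} = \sigma_1(N)/N$, $\sigma_1(N) \le \frac{\pi^2}{6}\psi(N)$ (as in the paper), and $\sigma_1(N)\le N\sigma_0(N)$. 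With those, the six cross terms from (area + perimeter + 1)$\times$(1 + $\varepsilon a$) summed over $a\mid N$ all land inside $\sqrt N\sigma_0(N) + \sqrt\varepsilon\psi(N)$ up to the $\lvert\tau_0\rvert\lvert\xi\rvert^5$ factor. As you note, matching the explicit constant $10^7$ would require carrying the numerical factors through; the paper obtains it by collecting its $\mathcal I(\tau_0,\xi)$ and specializing $\tau_0\in\F$ at the very end. Your hypothesis check is fine: $(100^{-1}\lvert\xi\rvert^{-3}\Im\xi)^2 \le 10^{-4}$ (using $\Im\xi\le\lvert\xi\rvert$ and $\lvert\xi\rvert\ge 1$), which is well inside $\sqrt 3/(3\lvert\xi\rvert+2)$.
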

\noindent For the remainder of the section we are going to prove this proposition.

For fixed $\tau \in \H$ with bounded real part we want to bound the number of matrices that
satisfy the conditions in the lemma.
For this we define
\begin{displaymath}
	\M(\xi;x;y;\varepsilon) = \#\{ \gamma \in \SL_2(\Z) ; \exists\tau = \tilde x+iy, |\tilde x|\le |x|,
												|\gamma\tau - \xi| \le \varepsilon \text{ and } \gamma\tau \in \bar\F \}.
\end{displaymath}
Note that the last lemma tells us that all $\tau$ on horizontal lines in the upper half--plane satisfy the
same equation for $(a,c)$. Thus, if we look at horizontal line segments the number $\M(\xi;x;y;\varepsilon)$
can be bounded independent in terms of $x$.

If $\gamma$ is as in the last lemma, then the first column $(a,c)$ is close to one of the two ellipses
\begin{displaymath}
	X^2 \pm 2|\Re(\xi)|XY + |\xi|^2Y^2 = \frac{\Im(\xi)}{\Im(\tau)}.
\end{displaymath}
More precisely, we have
\begin{equation}\label{eq:ellipse}
	\left| \lambda - \frac{\Im(\xi)}{\Im(\tau)}\right| \le 50 |\xi|^3 \frac{\varepsilon^{1/2}}{\Im(\tau)},
		\quad \text{where} \quad \lambda = a^2 \pm 2|\Re(\xi)|ac + |\xi|^2c^2.
\end{equation}

We need an upper bound for the number $N(\Im(\tau),\varepsilon)$ of lattice points $(a,c) \in \Z^2$ that satisfy \eqref{eq:ellipse}.
Each of these points lies in a neighborhood of an ellipse defined above. We are going to use a result by Davenport \cite{davenport}.
The following theorem is a special case of the result of Davenport.

\begin{theorem}\label{thm:davenport}
	Let $\mathcal{R}$ be a region in the two--dimensional plane with smooth boundary. If $V(\mathcal{R})$ denotes
	the volume of $\mathcal{R}$ and $N(\mathcal{R})$ the number of points with integral coordinates in $\mathcal{R}$,
	then
	\begin{displaymath}
		|N(\mathcal{R}) - V(\mathcal{R})| < 4(L+1),
	\end{displaymath}
	where $L$ is the length of the boundary of $\mathcal{R}$.
\end{theorem}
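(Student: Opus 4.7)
I would prove this by a standard tiling argument in which each lattice point is replaced by a unit cell and the resulting error is concentrated in cells meeting the boundary.

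To each $p \in \Z^2$ attach the half-open unit cell $Q_p = p + [-\tfrac12,\tfrac12)^2$. The family $\{Q_p\}_{p \in \Z^2}$ tiles $\R^2$, so $V(\mathcal{R}) = \sum_p |Q_p \cap \mathcal{R}|$ (Lebesgue measure), while trivially $N(\mathcal{R}) = \sum_p \mathbf{1}_{\mathcal{R}}(p)$. In the termwise difference
\[
V(\mathcal{R}) - N(\mathcal{R}) = \sum_p \bigl(|Q_p \cap \mathcal{R}| - \mathbf{1}_{\mathcal{R}}(p)\bigr),
\]
the summand vanishes whenever $Q_p \subset \mathcal{R}$ (both terms equal $1$) and whenever $Q_p \cap \mathcal{R} = \emptyset$ (both terms equal $0$); for the remaining indices the summand is bounded in absolute value by $1$, and these remaining indices are precisely those $p$ with $Q_p \cap \partial\mathcal{R} \ne \emptyset$. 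Hence $|V(\mathcal{R}) - N(\mathcal{R})| \le B$, where $B$ is the number of cells meeting the boundary.

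Next I would bound $B$ using rectifiability of the smooth boundary. Write $n = \lceil L \rceil$, parametrize $\partial\mathcal{R}$ by arclength and cut it into $n$ sub-arcs each of length at most $L/n \le 1$; each sub-arc therefore has diameter $\le 1$. An elementary check shows that every subset of $\R^2$ of diameter $\le 1$ fits inside an axis-aligned closed rectangle of sides $\le 1$, and such a rectangle can overlap at most $2 \times 2 = 4$ cells of the integer grid (at most two grid rows and two grid columns). Each of the $n$ sub-arcs therefore meets at most four cells $Q_p$, so $B \le 4n = 4\lceil L \rceil$.

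Finally, $\lceil L \rceil < L + 1$ for every real $L$ (for non-integers this is obvious, and for $L \in \Z$ one has $\lceil L \rceil = L < L+1$), so $B < 4(L+1)$, and combining with the first step yields $|V(\mathcal{R}) - N(\mathcal{R})| < 4(L+1)$. The only subtlety I foresee is when $\partial\mathcal{R}$ has several connected components or when $\mathcal{R}$ is unbounded; the first case is handled componentwise since arclengths add, and the hypothesis that $\mathcal{R}$ is a region with smooth boundary of finite length $L$ rules out the second. The geometric core of the argument is the "diameter $\le 1$ meets at most four cells" claim, but it is elementary, so I do not anticipate a genuine obstacle.
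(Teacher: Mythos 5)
The paper does not actually prove this statement: it is quoted as a special case of Davenport's lattice-point theorem and referred to \cite{davenport}. Davenport's own argument is quite different in spirit --- it bounds $|N(\mathcal{R})-V(\mathcal{R})|$ by counting, for axis-parallel lines, the number of intervals in which each line meets the region, and sums volumes of coordinate projections; the hypotheses there (each such intersection consists of at most $h$ intervals) are satisfied with $h=1$ for the convex regions, namely solid ellipses, to which the paper applies the result. Your tiling argument is a correct, elementary and self-contained substitute for that case: the reduction of $|N(\mathcal{R})-V(\mathcal{R})|$ to the number $B$ of unit cells meeting $\partial\mathcal{R}$ is sound (the non-vanishing summands form a subset of, not ``precisely'', the boundary cells, but only the inclusion is needed), and the count $B\le 4\lceil L\rceil<4(L+1)$ via arcs of diameter at most one is right, since a half-open interval of length at most $2$ contains at most two integers, so each such arc meets at most two columns and two rows of cells.

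The one genuine weak point is your closing remark that several boundary components are ``handled componentwise since arclengths add''. They are not: with $k$ components you get $B\le 4\sum_i\lceil L_i\rceil\le 4(L+k)$, which exceeds $4(L+1)$ for $k\ge 2$. This is not repairable, because the statement as literally printed fails for disconnected regions: take $k$ disks of circumference $1/10$ centred at distinct lattice points, so that $N=k$, $V$ is negligible and $L=k/10$; then $N-V>4(L+1)$ once $k\ge 7$. Davenport's actual theorem avoids this through its hypotheses on intersections with axis-parallel lines, which is why the paper's unqualified phrasing ``region with smooth boundary'' is already too generous. Since the paper only ever applies the bound to solid ellipses, whose boundary is a single smooth closed curve, your proof covers everything that is used; you should simply restrict the statement (or your proof) to connected boundary, rather than claim the disconnected case follows componentwise.
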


Thus, we need to compute the volume and the circumference of the ellipses that bound the given neighborhood.
Let us assume that
\begin{displaymath}
	\varepsilon \le \left(\frac{\Im(\xi)}{100|\xi|^3}\right)^2
\end{displaymath}
is small enough.
We consider the case when $\nu = 1$. The ellipses are then given by
\begin{displaymath}
	E_\pm \colon A a^2 + B ab + C b^2 = 1
\end{displaymath}
with
\begin{align*}
	A = \frac{\Im(\tau)}{\Im(\xi)\pm 50|\xi|^3\varepsilon^{1/2}},
	\quad B = \frac{2|\Re(\xi)|\Im(\tau)}{\Im(\xi)\pm 50|\xi|^3\varepsilon^{1/2}},
	\quad C = \frac{|\xi|^2\Im(\tau)}{\Im(\xi)\pm 50|\xi|^3\varepsilon^{1/2}}.
\end{align*}
The area of the bigger ellipse is then given by
\begin{displaymath}
	\vol(E_+) = \frac{2\pi}{\sqrt{4AC - B^2}} 
	= \pi \frac{\Im(\xi) + 50|\xi|^3\varepsilon^{1/2}}{\Im(\tau)\sqrt{|\xi|^2 - \Re(\xi)^2}}
	= \pi \frac{\Im(\xi) + 50|\xi|^3\varepsilon^{1/2}}{\Im(\tau)\Im(\xi)}.
\end{displaymath}
Similarly, we have
\begin{displaymath}
	\vol(E_-) = \pi \frac{\Im(\xi) - 50|\xi|^3\varepsilon^{1/2}}{\Im(\tau)\Im(\xi)}
\end{displaymath}
for the smaller ellipse.

\noindent We now want to bound the circumference of $E_\pm$. For this we will use the
following lemma.
\begin{lem}
	Let $E$ be an ellipse given by $Aa^2 + Bac + Cc^2 = 1$. Then the circumference $L$
	of $E$ is bounded by
	\begin{displaymath}
		L \le \sqrt{2(A+C)}\vol(E).
	\end{displaymath}
\end{lem}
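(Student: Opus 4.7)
The plan is to reduce $E$ to principal axes by an orthogonal diagonalisation and then bound the arclength by a single application of Cauchy--Schwarz.

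First I would observe that the symmetric matrix $M = \begin{smatrix} A & B/2 \\ B/2 & C \end{smatrix}$ is positive definite (this is what makes $E$ an ellipse), with eigenvalues $\lambda_1, \lambda_2 > 0$ satisfying $\lambda_1 + \lambda_2 = A + C$ and $\lambda_1 \lambda_2 = AC - B^2/4$. An orthogonal change of coordinates, which preserves both area and arclength, brings $E$ into the standard form with semi--axes $\alpha = \lambda_1^{-1/2}$ and $\beta = \lambda_2^{-1/2}$. From Vieta's relations one then reads off
\begin{displaymath}
\vol(E) = \pi \alpha \beta = \frac{\pi}{\sqrt{AC - B^2/4}}, \qquad \alpha^2 + \beta^2 = \frac{A+C}{AC - B^2/4} = (A+C)(\alpha\beta)^2.
\end{displaymath}

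Next I would parametrise the boundary of $E$ by $t \mapsto (\alpha\cos t, \beta \sin t)$, so that
\begin{displaymath}
L = \int_0^{2\pi} \sqrt{\alpha^2 \sin^2 t + \beta^2 \cos^2 t}\, dt.
\end{displaymath}
Cauchy--Schwarz applied to this integrand against the constant function $1$ on $[0,2\pi]$ gives
\begin{displaymath}
L^2 \le 2\pi \int_0^{2\pi}\bigl(\alpha^2 \sin^2 t + \beta^2 \cos^2 t\bigr)\, dt = 2\pi^2 (\alpha^2 + \beta^2),
\end{displaymath}
hence $L \le \pi \sqrt{2(\alpha^2 + \beta^2)}$. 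Substituting the identity $\alpha^2 + \beta^2 = (A+C)(\alpha\beta)^2$ and using $\vol(E) = \pi\alpha\beta$ yields
\begin{displaymath}
L \le \pi \alpha \beta \sqrt{2(A+C)} = \sqrt{2(A+C)}\, \vol(E),
\end{displaymath}
which is the desired inequality.

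There is no real obstacle here: the argument is elementary linear algebra together with one classical inequality, and the constants line up exactly. The only small point worth noting is that the orthogonal diagonalisation preserves both the area and the circumference of $E$, so working in the principal axis coordinates entails no loss of generality and the bound expressed in terms of $A$, $C$ carries back over.
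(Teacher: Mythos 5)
Your proof is correct and takes essentially the same route as the paper: reduce to principal axes, bound the circumference by $\pi\sqrt{2(\alpha^2+\beta^2)}$, and convert back to $A,B,C$ using the invariance of trace and discriminant. The one substantive difference is that you actually derive the bound $L\le\pi\sqrt{2(\alpha^2+\beta^2)}$ via the $t\mapsto(\alpha\cos t,\beta\sin t)$ parametrisation and Cauchy--Schwarz, whereas the paper states it without proof; you also package the rotation as an eigenvalue decomposition rather than writing out the rotation angle $\theta$ and the explicit formulas for $A'$, $C'$, which is a cleaner way to obtain the relations $\lambda_1+\lambda_2=A+C$ and $\lambda_1\lambda_2=AC-B^2/4$. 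The constants line up exactly, so this is a valid and slightly more self-contained version of the same argument.
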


\begin{proof}
	To prove this we rotate the ellipse, so that the new equation becomes
	\begin{equation}\label{eq:ellipse_defeq}
		A'a^2 + C'b^2 = 1.
	\end{equation}
	The coef{}f{}icients are given by
	\begin{displaymath}
		A' = \frac{A+C}2 + \frac{A-C}2\cos(2\theta) - \frac B2\sin(2\theta)
	\end{displaymath}
	and
	\begin{displaymath}
		C' = \frac{A+C}2 - \frac{A-C}2\cos(2\theta) + \frac B2\sin(2\theta),
	\end{displaymath}
	where $\theta$ satisfies $\cot 2\theta = \frac{A-C}B$ or $\tan 2\theta = \frac{B}{A-C}$.
	Note if $B = 0$, we have $\theta = 0$, so that $A'=A$ and $C'=C$.
	Now the circumference of an ellipse in the form of \eqref{eq:ellipse_defeq}
	can be estimated by
	\begin{displaymath}
		L \le \sqrt{2} \pi \sqrt{\frac 1{A'} + \frac 1{C'}}
		\le \sqrt{2} \pi \sqrt{\frac{A'+C'}{A'C'}}
		= 2\sqrt{2} \pi \sqrt{\frac{A'+C'}{4A'C'}}.
	\end{displaymath}
	But if we put $B'=0$, then $A'+C' = A+C$ and $4A'C' = 4A'C' - B'^2 = 4AC - B^2$
	since the discriminant is an invariant.
	Thus
	\begin{displaymath}
		L \le \sqrt{2} \sqrt{A+C}\frac{2\pi}{4AC-B^2}
		= \sqrt{2} \sqrt{A+C}\vol(E),
	\end{displaymath}
	as desired.
\end{proof}

If $L_+$ denotes the circumference of $E_+$, then we have by the previous lemma
\begin{align*}
	L_+ &\le \sqrt{2}\pi \sqrt{\frac{\Im(\tau) + |\xi|^2\Im(\tau)}{\Im(\xi) + 50|\xi|^3\varepsilon^{1/2}}}
	\frac{\Im(\xi) + 50|\xi|^3\varepsilon^{1/2}}{\Im(\tau)\Im(\xi)}	\\
	&\le \sqrt{2}\pi \frac{\sqrt{1 + |\xi|^2}}{\Im(\xi)} \sqrt{\frac{\Im(\xi) + 50|\xi|^3\varepsilon^{1/2}}{\Im(\tau)}}.
\end{align*}
Now we use the bound on $\varepsilon$ to get
\begin{align*}
	L_+ &\le \sqrt{2}\pi \frac{\sqrt{1 + |\xi|^2}}{\Im(\xi)} \sqrt{\frac{\Im(\xi) + \frac 12 \Im(\xi)}{\Im(\tau)}}	\\
	&= \sqrt{3}\pi \sqrt{\frac{1 + |\xi|^2}{\Im(\xi)}} \frac{1}{\sqrt{\Im(\tau)}}.
\end{align*}
We have $\frac{|\xi|}{\Im(\xi)} \le \frac 2{\sqrt{3}}$ since $\xi$ is in the fundamental domain.
Hence $\frac{|\xi|^2}{\Im(\xi)} \le \frac 4{3}\Im(\xi)$ and therefore
\begin{align*}
	\frac{1 + |\xi|^2}{\Im(\xi)} = \frac{1}{\Im(\xi)} + \frac{|\xi|^2}{\Im(\xi)} \le 
	\frac 2{\sqrt{3}} + \frac 4{{3}}\Im(\xi) \le \frac 83\Im(\xi).
\end{align*}
Using this for the bound of $L_+$ yields
\begin{displaymath}
	L_+ \le 2\pi \frac{\sqrt{2\Im(\xi)}}{\sqrt{\Im(\tau)}}.
\end{displaymath}

Clearly this bound also holds for $L_-$, the circumference of the smaller ellipse.

Let $N(E_\pm)$ denote the number of lattice points contained in $E_\pm$ as defined in Theorem \ref{thm:davenport}.
By this same theorem, the number of points contained
in the elliptical annulus can be estimated by
\begin{align*}
	N(E_+) - N(E_-) &= N(E_+) - \vol(E_+) - (N(E_-) - \vol(E_-)) + \vol(E_+) - \vol(E_-)	\\
	&\le 4(L_++1) + 4(L_-+1) + \vol(E_+) - \vol(E_-)	\\
	&\le 8(L_+ + 1) + \frac{100\pi|\xi|^3\varepsilon^{1/2}}{\Im(\tau)\Im(\xi)}	\\
	&\le 16\pi \frac{\sqrt{2\Im(\xi)} + \sqrt{\Im(\tau)}}{\sqrt{\Im(\tau)}}
			+ \frac{100\pi|\xi|^3\varepsilon^{1/2}}{\Im(\tau)\Im(\xi)}.
\end{align*}
Therefore, a bound for $N(\Im(\tau),\varepsilon)$ is given by twice this number since the ellipse for $\nu = -1$
gives the same bound.

To obtain a bound for the number of matrices satisfying the conditions in Lemma \ref{lem:ellipse},
we need to estimate the possible pairs
$(b,d)$ when $(a,c)$ is fixed.
Let $(a,c)$ be fixed, and assume that $(b,d)$ and $(b',d')$
satisfy $ad-bc=1$ and $ad'-b'c=1$, respectively. Then $(b-b',d-d') = (ak,ck)$ for some integer $k$.
Lemma \ref{lem:ellipse} now implies
\begin{displaymath}
	|k| \le 2|\Re(\tau)| + 2\frac{4|\xi|+1}{\sqrt 3} \le 2|\Re(\tau)| + 6|\xi|.
\end{displaymath}
Thus, $\M(\xi;x;y;\varepsilon)$ is bounded by
\begin{align}\label{eq:numberofmatrices}
	N(y,\varepsilon) &\cdot \left( 2\cdot(2x + 6|\xi|) + 1\right)
	\le N(y,\varepsilon) \cdot (4x + 13|\xi|) \notag\\
	&\le 2\left(16\pi \frac{\sqrt{2\Im(\xi)} + \sqrt{y}}{\sqrt{y}}
			+ \frac{100\pi|\xi|^3\varepsilon^{1/2}}{y\Im(\xi)}\right)
				\left(4x + 13|\xi|\right).
\end{align}

We now want to apply this result to estimate the number of points close to a fixed point which are
given by a cyclic isogeny of degree $N$.
Let $\tau_0 \in \H$ be fixed. Let $N \in \N$.
We will be working with matrices $M$ of the form $\begin{smatrix}m&l\\0&n\end{smatrix}$
with $N = mn$ and $0 \le l < n$. We will denote $M.\tau_0$ by $\tau_M$.
We want to bound the number of points $\tau_M$ satisfying $|\tilde\tau_M - \xi| \le \varepsilon$ with $\tilde\tau_M$ in
the $\SL_2(\Z)$--orbit of $\tau_M$ and in $\bar\F$.
For this we momentarily fix a divisor $n$ of $N$ with $n \ge \sqrt{N}$ and a matrix $M$
with $M = \begin{smatrix}m&l\\0&n\end{smatrix}$ and $0 \le l < n$. Then $y := \Im(\tau_M) = \frac mn\Im(\tau_0)$
for any $0 \le l < n$. Figure \ref{fig:isogenouspts} shows an example with $\tau = 1+i$ and $N=10$.
	
\begin{figure}\label{fig:isogenouspts}
	\begin{center}
		\pgfmathsetmacro{\myxlow}{-1}
		\pgfmathsetmacro{\myxhigh}{3}
		\pgfmathsetmacro{\myiterations}{2}
		\begin{tikzpicture}[scale=1.5]
			\draw[-latex](\myxlow-0.1,0) -- (\myxhigh+0.2,0);
			\pgfmathsetmacro{\succofmyxlow}{\myxlow+0.5}
			\foreach \x in {\myxlow,\succofmyxlow,...,\myxhigh}
			{   \draw (\x,0) -- (\x,-0.05) node[below,font=\tiny] {\x};
			}
			\foreach \y  in {0.5,1,...,2}
			{   \draw (0.02,\y) -- (-0.02,\y) node[left,font=\tiny] {\pgfmathprintnumber{\y}};
			}
			\draw[-latex](0,-0.05) -- (0,2.6);

			\coordinate (zeta) at (1/2,{sqrt(3)/2});
			\coordinate (zeta2) at (-1/2,{sqrt(3)/2});

			\draw[domain=60:120] plot ({cos(\x)}, {sin(\x)});
			\path[bottom color=black,top color=white] (zeta) rectangle +(.2pt,2.5cm);
			\path[bottom color=black,top color=white] (zeta2) rectangle +(-.2pt,2.5cm);

			\coordinate (tau) at (1, 1);

			\draw (tau) circle (.3pt) node[below,font=\tiny] {$\tau$};

			\foreach \n in {2,5,10}
			{
			\pgfmathsetmacro\ratio{10/\n/\n}
			\pgfmathsetmacro\up{\n-1}
			\foreach \l in {0,...,\up}
			{
				\pgfmathsetmacro\ratiol{\l/\n}
				\filldraw ($\ratio*(tau) + (\ratiol,0)$) circle (.3pt) {};
			}
			}

		\end{tikzpicture}
	\end{center}
	\caption{\small$\tau_0$ and all except one $\tau_M$ for $N=10$}
\end{figure}
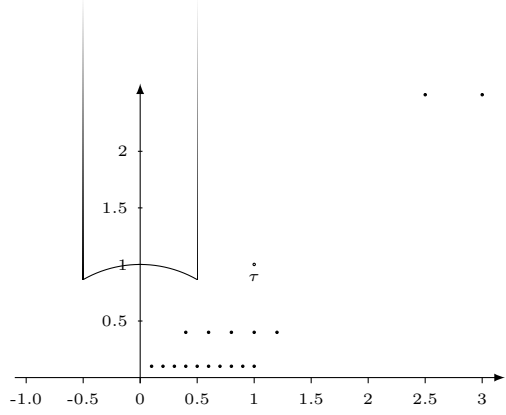

Since $y$ does not depend on $l$ and $|\Re(\tau_M)| \le |\Re(\tau_0)|+1$, the bound
for $\M(\xi;|\Re(\tau_0)|+1;y;\varepsilon)$ is independent of $l$. This number
does not estimate all the $\tau_M$ that are translated close to $\xi$ as
we will see later.
The bound in \eqref{eq:numberofmatrices} translates to
\begin{align*}
	\M(\xi;|\Re(\tau_0)|+1;y;\varepsilon)
	\le 8\left(4\pi \frac{\sqrt{2\Im(\xi)} + \sqrt{\frac mn\Im(\tau_0)}}{\sqrt{\Im(\tau_0)}}\sqrt{\frac nm}
			+ \frac{25\pi|\xi|^3\varepsilon^{1/2}}{\Im(\tau_0)\Im(\xi)}\frac nm\right)	\\
				\cdot\left(4|\Re(\tau_0)| + 13|\xi| + 4\right).
\end{align*}
But $\frac mn \le 1$ since $n \ge \sqrt{N}$ and hence
\begin{align*}
	\M(\xi;&|\Re(\tau_0)|+1;y;\varepsilon)	\\
	&\le 8\left(4|\Re(\tau_0)| + 17|\xi|\right)
		\left(4\pi \frac{\sqrt{2\Im(\xi)} + \sqrt{\Im(\tau_0)}}{\sqrt{\Im(\tau_0)}}\sqrt{\frac nm}
			 +\frac{25\pi|\xi|^3\varepsilon^{1/2}}{\Im(\tau_0)\Im(\xi)}\frac nm\right)
					\\
		&\le 8\left(4|\Re(\tau_0)| + 17|\xi|\right)
		\left(8\pi \frac{\sqrt{2\Im(\xi)} + \sqrt{\Im(\tau_0)}}{\sqrt{\Im(\tau_0)}}\sqrt{\frac nm}
			 +\frac{35\pi|\xi|^2\varepsilon^{1/2}}{\Im(\tau_0)}\frac nm\right)
\end{align*}
if we also apply $|\xi|^2/\Im(\xi) \le 4|\xi|/3$.
Further we get
\begin{align}\label{eq:taul_matrices}
	\M(\xi;|\Re(\tau_0)|+1;y;\varepsilon)
		\notag
		\le 64\pi&\left(4|\Re(\tau_0)| + 17|\xi|\right)	\\
		&\cdot \max\left\{\frac{\sqrt{2\Im(\xi)} + \sqrt{\Im(\tau_0)}}{\sqrt{\Im(\tau_0)}},5\frac{|\xi|^2}{\Im(\tau_0)}\right\}	\\
				&\cdot\left(\sqrt{\frac nm} + \frac nm \varepsilon^{1/2}\right).
				\notag
\end{align}

Now different $\tau_M$ (entry $l$ different) can be translated close to $\xi$ by the same matrix,
so we have to restrict those.
So if $\tau_M$ is translated into the disc around $\xi$ by a matrix $\begin{smatrix}a&b\\c&d\end{smatrix}$ then
the real part $x$ of $\tau_M$ satisfies
\begin{displaymath}
	\left|cx+d\right||c| \le 3|\xi|
	\quad \text{and} \quad
	\left|ax+b\right||a| \le 3|\xi|
\end{displaymath}
by \eqref{eq:dequation} and \eqref{eq:bequation}.
Assume that $c\not=0$. Then $\left\vert x+d/c\right\vert \le 3|\xi| c^{-2}$, so that
$x$ lies in an interval $I$ with center $-d/c$ and of length bounded by $6|\xi| c^{-2}$.
This implies
\begin{displaymath}
	\left\vert\left\{l \in \{0, \dotsc, n-1 \} : (m\Re(\tau_0) + l)/n \in I \right\}\right\vert
	\le n|I| + 1 \le 6|\xi| \frac{n}{c^2} + 1.
\end{displaymath}
A similar result is obtained if $a\not=0$. So in any case
\begin{equation}\label{eq:bound_taul_ac}
	\left\vert\left\{l \in \{0, \dotsc, n-1 \} : (m\Re(\tau_0) + l)/n \in I \right\}\right\vert
	\le 6|\xi| \frac{n}{\max\{|a|,|c|\}^2} + 1
\end{equation}
independent of whether the interval is centered around $-d/c$ or $-b/a$.
Moreover, $\max\{|a|,|c|\}^2$ can be bounded by
\begin{align*}
	3|\xi|^2\max\{|a|,|c|\}^2 &\ge a^2 + \nu 2 |\Re(\xi)| ac + |\xi|^2 c^2	\\
	&\ge \frac{\Im(\xi) - 50|\xi|^3\varepsilon^{1/2}}{y}
	= \frac{\Im(\xi) - 50|\xi|^3\varepsilon^{1/2}}{\Im(\tau_0)}\frac nm	\;,
\end{align*}
where the last inequality follows from Equation \eqref{eq:matrixentries}.
Using the upper bound on $\varepsilon$ we obtain
\begin{displaymath}
	3|\xi|^2\max\{|a|,|c|\}^2 \ge \frac{\Im(\xi)}{2\Im(\tau_0)}\frac nm \;,
\end{displaymath}
and hence
\begin{equation}\label{eq:bound_taul}
	6|\xi| \frac{n}{\max\{|a|,|c|\}^2}
	\le 36|\xi|^3 \frac{\Im(\tau_0)}{\Im(\xi)} m
	\le 50|\xi|^2 \Im(\tau_0) m
\end{equation}
since $|\xi|/\Im(\xi) \le 2/\sqrt{3}$.

Recall that the matrix $M$ is of the form $\begin{smatrix}m&l\\0&n\end{smatrix}$
with $N = mn$ and $0 \le l < n$  and $\tau_M = M\tau_0$.
As before, $\tilde\tau_M$ is in the $\SL_2(\Z)$--orbit of $\tau_M$ and in $\bar\F$.

Let $\Lambda(\tau_0;N;\varepsilon)$ be the set of $\tau_M$ satisfying $|\tilde\tau_M - \xi| \le \varepsilon$,
where $\tau_M$ is as before.
The number of elements in $\Lambda(\tau_0;N;\varepsilon)$ is surely bounded by the number of matrices $M$ with
lower right entry greater than $\sqrt N$ satisfying the condition
plus the total number of matrices with $n \le \sqrt{N}$. The latter is bounded by
\begin{equation*}
	\sum_{\twostack{n|N}{0 < n \le \sqrt{N}}} n \le \sqrt{N} \sum_{n|N} 1 = \sqrt{N} \sigma_0(N).
\end{equation*}
For $n \le \sqrt{N}$ we are going to count $\tilde\tau_M$ independent of whether $|\tilde\tau_M - \xi| \le \varepsilon$
or not since the number $\sqrt{N}\sigma_0(N)$ does not grow too fast.
Now by the arguments we just made, we can bound the number of $\tau_M$ and thus the total number
of points in $\Lambda(\tau_M;N;\varepsilon)$ as follows. Recall that $N=mn$.
\begin{align*}
	|\Lambda(\tau_0;N;\varepsilon)| \le& \sum_{\twostack{n|N}{n \ge \sqrt{N}}}
						\M\left(\xi;|\Re(\tau_0)|+1;\frac N{n^2}\Im(\tau_0);\varepsilon\right) \left(
	6|\xi| \frac{n}{\max\{|a|,|c|\}^2} + 1 \right)	\\
	&+ \sqrt{N}\sigma_0(N).
\end{align*}
Here, for fixed $n$ the number $\M\left(\xi;|\Re(\tau_0)|+1;\frac N{n^2}\Im(\tau_0);\varepsilon\right)$
bounds the matrices
that translate any $\tau_M$ of the form $\begin{smatrix}m&l\\0&n\end{smatrix}$ with varying $l$ close to $\xi$.
But since different $\tau_M$ can be translated into
the disc around $\xi$ by the same matrix we have to compensate this with the inequality in \eqref{eq:bound_taul_ac}.
This in turn can be estimated as displayed in \eqref{eq:bound_taul} so that
\begin{align*}
	|\Lambda(\tau_0;N;\varepsilon)|
	\le& \sum_{\twostack{n|N}{n \ge \sqrt{N}}}
						\M\left(\xi;|\Re(\tau_0)|+1;\frac N{n^2}\Im(\tau_0);\varepsilon\right)
	\left( 50|\xi|^2 \Im(\tau_0) \frac Nn + 1 \right) 	\\
	&+ \sqrt{N}\sigma_0(N).
\end{align*}
By the inequality for $\M\left(\xi;|\Re(\tau_0)|+1;\frac N{n^2}\Im(\tau_0);\varepsilon\right)$
in \eqref{eq:taul_matrices} and $1 \le m$ we get
\begin{align*}
	|\Lambda(\tau_0;N;\varepsilon)|
	\le \sqrt{N}\sigma_0(N) +
	\sum_{\twostack{m|N}{m \le \sqrt{N}}}m 
		\cdot \mathcal{I}(\tau_0,\xi)
			\cdot\left(\sqrt{\frac N{m^2}} + \frac N{m^2} \varepsilon^{1/2}\right)
\end{align*}
where
\begin{align*}
	\mathcal{I}(\tau_0,\xi) = 64\pi&\left(4|\Re(\tau_0)| + 17|\xi|\right)\left( 50|\xi|^2 \Im(\tau_0) + 1 \right) 	\\
	&\cdot\max\left\{\frac{\sqrt{2\Im(\xi)} + \sqrt{\Im(\tau_0)}}{\sqrt{\Im(\tau_0)}},5\frac{|\xi|^2}{\Im(\tau_0)}\right\}.
\end{align*}
We can continue the estimate
\begin{align*}
	|\Lambda(\tau_0;N;\varepsilon)|
	&\le \sqrt{N}\sigma_0(N) + \mathcal{I}(\tau_0,\xi)
	\sum_{\twostack{m|N}{m \le \sqrt{N}}} m
		\left(\frac{\sqrt{N}}{m} + \frac N{m^2} \varepsilon^{1/2}\right)	\\
	&= \sqrt{N}\sigma_0(N) + \mathcal{I}(\tau_0,\xi)
	\sum_{\twostack{m|N}{m \le \sqrt{N}}}
		\left(\sqrt{N} + \frac N{m} \varepsilon^{1/2}\right).
\end{align*}
We split the sum to get
\begin{align}
	\notag
	|\Lambda(\tau_0;N;\varepsilon)|
	&\le \sqrt{N}\sigma_0(N) + \mathcal{I}(\tau_0,\xi)
	\left(\sum_{\twostack{m|N}{m \le \sqrt{N}}}\sqrt{N} +
	\sum_{\twostack{m|N}{m \le \sqrt{N}}} \frac N{m} \varepsilon^{1/2}\right)	\\
	\notag
	&= \sqrt{N}\sigma_0(N)(1 + \mathcal{I}(\tau_0,\xi)) + \mathcal{I}(\tau_0,\xi)
	\varepsilon^{1/2} \sum_{\twostack{n|N}{n \ge \sqrt{N}}} n	\\
	&= \sqrt{N}\sigma_0(N)(1 + \mathcal{I}(\tau_0,\xi)) + \mathcal{I}(\tau_0,\xi)
	\varepsilon^{1/2} \sigma_1(N).
	\label{eq:isogeny_cluster_bound}
\end{align}

\begin{lem}
	Let $N$ be a positive integer. Then $\sigma_1(N) \le \frac{\pi^2}6 \psi(N)$.
\end{lem}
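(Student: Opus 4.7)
The plan is to exploit the multiplicativity of both $\sigma_1$ and the Dedekind function $\psi$, reducing the inequality to a statement about prime powers, and then to recognize the resulting product as being bounded by the Euler product for $\zeta(2)$.

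First I would record that $\sigma_1(N) = \sum_{d\mid N} d$ and $\psi(N) = N\prod_{p\mid N}(1+p^{-1})$ are both multiplicative in $N$, so the ratio $\sigma_1(N)/\psi(N)$ is multiplicative. It therefore suffices to bound this ratio at prime powers. For a prime $p$ and an integer $k\ge 1$ one has the explicit formulas
\begin{displaymath}
    \sigma_1(p^k) = \frac{p^{k+1}-1}{p-1}, \qquad \psi(p^k) = p^{k-1}(p+1),
\end{displaymath}
so that
\begin{displaymath}
    \frac{\sigma_1(p^k)}{\psi(p^k)} = \frac{p^{k+1}-1}{p^{k-1}(p^2-1)}.
\end{displaymath}

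The key step is the elementary inequality $p^{k+1}-1 \le p^{k+1}$, which rearranges to
\begin{displaymath}
    \frac{\sigma_1(p^k)}{\psi(p^k)} \le \frac{p^2}{p^2-1} = \frac{1}{1-p^{-2}}.
\end{displaymath}
This is the crux of the argument; everything else is bookkeeping.

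Taking the product over the primes $p$ dividing $N$ and using multiplicativity gives
\begin{displaymath}
    \frac{\sigma_1(N)}{\psi(N)} = \prod_{p^k \| N} \frac{\sigma_1(p^k)}{\psi(p^k)}
    \le \prod_{p\mid N} \frac{1}{1-p^{-2}} \le \prod_{p} \frac{1}{1-p^{-2}} = \zeta(2) = \frac{\pi^2}{6},
\end{displaymath}
where the final equality is the standard Euler product evaluation of $\zeta(2)$. Multiplying through by $\psi(N)$ yields the claim. I do not expect any real obstacle here; the only thing to be slightly careful about is the edge case $k=1$, where $p^{k-1}=1$, but the formula and inequality remain valid without modification.
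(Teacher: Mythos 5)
Your proof is correct and follows essentially the same route as the paper: use multiplicativity of $\sigma_1$ and $\psi$ to reduce to prime powers, bound $\sigma_1(p^k)/\psi(p^k)$ by $(1-p^{-2})^{-1}$, and compare the resulting product to the Euler product for $\zeta(2) = \pi^2/6$.
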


\begin{proof}
	It is well--known that $\sigma_1$ is multiplicative. The function $\psi$ is also multiplicative,
	see page 53 of \cite{LangEllFunc}.
	We have $\sigma_1(p^k) = \frac{p^{k+1}-1}{p-1}$ and $\psi(p^k) = p^{k-1}(p+1)$.
	Thus
	\begin{align*}
		\frac{\sigma_1(p^k)}{\psi(p^k)} = \frac{p^{k+1} - 1}{(p-1)(p+1)p^{k-1}} \le \frac{p^2}{p^2-1} = \frac 1{1-p^{-2}}.
	\end{align*}
	For general $N$ this yields
	\begin{align*}
		\frac{\sigma_1(N)}{\psi(N)} \le \prod_{p^k \Vert N} \frac 1{1-p^{-2}} \le \prod_{p} \frac 1{1-p^{-2}} = \zeta(2),
	\end{align*}
	where $\zeta$ denotes the Riemann zeta function. This proves the claim.
\end{proof}

\noindent Altogether we get
\begin{lem}
	Fix $\tau_0 \in \H$ and $\xi \in \bar\F$. Let $0 < \varepsilon \le \left(\frac{\Im(\xi)}{100|\xi|^3}\right)^2$.
	Let $\Lambda(\tau_0;N;\varepsilon)$ be the number of $M\tau_0$,
	$M = \begin{smatrix}m&l\\0&n\end{smatrix}$ with $N=mn$ and $0\le l < n$,
	that satisfy $|\gamma M\tau_0 - \xi| \le \varepsilon$ for some $\gamma \in \sl2z$.
	Then
	\begin{displaymath}
		|\Lambda(\tau_0;N;\varepsilon)|
		\le \sqrt{N}\sigma_0(N)(1 + \mathcal{I}(\tau_0,\xi)) + \frac{\pi^2}6 \mathcal{I}(\tau_0,\xi)
		\varepsilon^{1/2} \psi(N).
	\end{displaymath}
	with
	\begin{align*}
		\mathcal{I}(\tau_0,\xi) = 64\pi&\left(4|\Re(\tau_0)| + 17|\xi|\right)\left( 50|\xi|^2 \Im(\tau_0) + 1 \right) 	\\
		&\cdot\max\left\{\frac{\sqrt{2\Im(\xi)} + \sqrt{\Im(\tau_0)}}{\sqrt{\Im(\tau_0)}},5\frac{|\xi|^2}{\Im(\tau_0)}\right\}.
	\end{align*}
\end{lem}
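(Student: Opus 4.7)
The proof proposal is essentially to collect results already established in the preceding pages of this section, so the work reduces to a packaging step rather than a new argument.

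First I would recall that the bound \eqref{eq:isogeny_cluster_bound}, derived just before the sub-lemma on $\sigma_1$ versus $\psi$, already states exactly
\begin{displaymath}
	|\Lambda(\tau_0;N;\varepsilon)|
	\le \sqrt{N}\sigma_0(N)(1 + \mathcal{I}(\tau_0,\xi)) + \mathcal{I}(\tau_0,\xi) \varepsilon^{1/2} \sigma_1(N),
\end{displaymath}
under exactly the hypotheses of the present lemma (the upper bound on $\varepsilon$ is what justified the intermediate estimates going through Lemma \ref{lem:ellipse}, the Davenport count of lattice points in the elliptic annulus between $E_+$ and $E_-$, the circumference bound, and the counting of admissible $l$ for each $(a,c)$). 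So up to notation this is already the asserted inequality, just with $\sigma_1(N)$ in place of $\frac{\pi^2}{6}\psi(N)$.

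The only remaining step is to replace $\sigma_1(N)$ by the Dedekind $\psi$-function. For this I invoke the preceding sub-lemma, which establishes $\sigma_1(N) \le \tfrac{\pi^2}{6} \psi(N)$ by a direct multiplicative computation comparing $\sigma_1(p^k)$ and $\psi(p^k)$ and then taking the product over primes bounded by $\zeta(2) = \pi^2/6$. Substituting into the displayed bound above yields exactly
\begin{displaymath}
	|\Lambda(\tau_0;N;\varepsilon)|
	\le \sqrt{N}\sigma_0(N)(1 + \mathcal{I}(\tau_0,\xi)) + \frac{\pi^2}6 \mathcal{I}(\tau_0,\xi) \varepsilon^{1/2} \psi(N),
\end{displaymath}
with $\mathcal{I}(\tau_0,\xi)$ defined as in the statement.

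Since all of the hard work has already been done, there is no real obstacle. The only thing to be careful about is bookkeeping: one has to verify that the hypothesis $\varepsilon \le (\Im(\xi)/100|\xi|^3)^2$ in the present lemma is indeed the one used throughout the derivation of \eqref{eq:isogeny_cluster_bound} (it is, since it is precisely this bound that was used both to control the circumference $L_+$ of the outer ellipse and to pass from $\Im(\xi) - 50|\xi|^3\varepsilon^{1/2}$ to $\Im(\xi)/2$ in \eqref{eq:bound_taul}), and that the expression $\mathcal{I}(\tau_0,\xi)$ is identical in both places. Once these identifications are checked, the lemma follows immediately.
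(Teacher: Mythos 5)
Your proposal is correct and matches the paper's own (implicit) proof exactly: the paper presents this lemma immediately after \eqref{eq:isogeny_cluster_bound} and the $\sigma_1(N)\le\frac{\pi^2}{6}\psi(N)$ sub-lemma with only the words ``Altogether we get,'' i.e.\ precisely the substitution you describe. Your bookkeeping checks on the $\varepsilon$-hypothesis and the definition of $\mathcal{I}(\tau_0,\xi)$ are also accurate.
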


To complete the proof of Proposition \ref{prop:clusterbound} we restrict $\tau_0$ to the fundamental domain.
Then $\Im(\tau_0) \ge \sqrt{3}/2$ and $|\Re(\tau_0)| \le 1/2$.
Therefore we get
\begin{align*}
	\frac{\sqrt{2\Im(\xi)} + \sqrt{\Im(\tau_0)}}{\sqrt{\Im(\tau_0)}}
	=1 + \frac{\sqrt{2\Im(\xi)}}{\sqrt{\Im(\tau_0)}}
	\le 1 + 2\sqrt{\Im(\xi)}
	\le 6|\xi|^2
\end{align*}
and hence
\begin{align*}
	\max\left\{1+\mathcal{I}(\tau_0,\xi),\frac{\pi^2}6\mathcal{I}(\tau_0,\xi)\right\}
	\le 64\pi&\left(2 + 17|\xi|\right) \cdot 60|\xi|^2 |\tau_0|
	\cdot 6|\xi|^2
	\le 10^7|\tau_0| |\xi|^5.
\end{align*}
Recall that an $N$--isogeny (i.e.~a cyclic isogeny of degree $N$) is related to a matrix
of the form $M = \begin{smatrix}m&l\\0&n\end{smatrix}$ with $N=mn$, $0\le l < n$ and
$\gcd(m,n,l) = 1$. We have considered such matrices without a condition
on the greatest common divisor.
Therefore we are done with the proof of Proposition \ref{prop:clusterbound}.

\section{Bounding the height}

Recall that we have fixed an elliptic curve without complex multiplication defined over a number field $K$
and $j_0$ is its $j$--invariant.
Two points in the fundamental domain are in the same Hecke orbit if there exists an isgoeny between them.
We are going to compare the Galois orbit of $j_0$ to the Hecke orbit of all conjugates of $E_0$.
We now want to bound the number of elements in $\Gamma(\xi,\varepsilon)$. For this we use the connection
between the isogeny orbit and the Galois orbit of Serre's open image theorem. See Th\'eor\`eme 3 in §4 of
\cite{openimage}.

More precisely, we will be using a version proved by Lombardo \cite{lombardo}, that gives us an explicit bound.
Let $G_K = \Gal(\bar{K}/K)$ be the absolute Galois group of $K$. Recall that $G_K$ acts on the
$N$--torsion points of $N$, and we thus get a representation
\begin{displaymath}
	\rho_N\colon G_K \rightarrow \Aut(E[N]).
\end{displaymath}
The group $\Aut(E[N])$ is isomorphic to $\GL_2(\Z/N\Z)$.
It is possible to choose a suitable basis of $\GL_2(\Z/N\Z)$ so that
we obtain a representation
\begin{displaymath}
	\rho_\infty\colon G_K \rightarrow \GL_2(\hat\Z)
\end{displaymath}
after taking the inverse limit (over $N$.)
Serre proved in \cite{openimage} that $[ \GL_2(\hat\Z) : \rho_\infty(G_K)]$ is finite.
The result by Lombardo implies
\begin{equation}\label{eq:lombardo}
	\left[ \GL_2(\hat\Z) : \rho_\infty(G_K) \right]
		< \gamma_1 \cdot [K:\Q]^{\gamma_2} \cdot \max\left\{1,h(E),\log[K:\Q]\right\}^{2\gamma_2}
\end{equation}
where $\gamma_1 = \exp\left(10^{21483}\right)$ and $\gamma_2 = 2.4 \cdot 10^{10}$.
In particular, we obtain
\begin{displaymath}
	\left[ \GL_2(\Z/N\Z) : \rho_N(G_K) \right]
		< \gamma_1 \cdot [K:\Q]^{\gamma_2} \cdot \max\left\{1,h(E),\log[K:\Q]\right\}^{2\gamma_2}.
\end{displaymath}

Note that Lombardo's result actually uses the original definition of the Faltings height.
This information was acquired through a private conversation with the author.
Since the original definition of the Faltings height
is smaller than the normalization of Deligne,
we can just substitute $h(E)$ into Lombardo's result.

The cyclic isogenies of degree $N$ correspond in a one--to--one fashion to the cyclic subgroups of order $N$
in $\Z/N\Z \times \Z/N\Z \simeq E[N]$.
The action of $\GL_2(\Z/N\Z)$ on these subgroups is transitive as the next lemma states.
We start with some group theory. We denote by $\varphi$ Euler's totient function
given by $\varphi(N) = \#(\Z/N\Z)^\times = N \prod_{p|N} (1-1/p)$, where the product
runs over all primes $p$ dividing $N$.
Recall $\psi(N) = N \prod_{p|N} (1+ 1/p)$.

\begin{lem}
	The cardinality of $\GL_2(\Z/N\Z)$ is equal to $\varphi(N)^2 \psi(N) N$.
	Let $\Delta \subset \GL_2(\Z/N\Z)$ denote the subgroup of upper triangular matrices.
	Then $\#\Delta = N\varphi(N)^2$.
	There are $\psi(N)$ cyclic subgroups of $\Z/N\Z \times \Z/N\Z$.
	The group $\GL_2(\Z/N\Z)$ acts transitively on the cyclic subgroups of order $N$ in
	$(\Z/N\Z)^2$.
\end{lem}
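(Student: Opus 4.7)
The plan is to reduce nearly everything to prime-power moduli via the Chinese Remainder Theorem isomorphism
\begin{displaymath}
    \GL_2(\Z/N\Z) \;\simeq\; \prod_{p^k \Vert N} \GL_2(\Z/p^k\Z),
\end{displaymath}
and the corresponding decomposition of $(\Z/N\Z)^2$. Since $\varphi$, $\psi$, and $N \mapsto N$ are multiplicative, and cyclic subgroups of order $N$ in $(\Z/N\Z)^2$ correspond under the CRT isomorphism to tuples of cyclic subgroups of maximal order in the factors, it is enough to prove each of the four claims when $N = p^k$ is a prime power, and then take products.

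For the cardinality $|\GL_2(\Z/p^k\Z)|$ I would use the reduction-mod-$p$ surjection $\GL_2(\Z/p^k\Z) \twoheadrightarrow \GL_2(\F_p)$: its kernel is $I + p\cdot\Mat_2(\Z/p^k\Z)$, a set of size $p^{4(k-1)}$, and $|\GL_2(\F_p)| = (p^2-1)(p^2-p)$, so $|\GL_2(\Z/p^k\Z)| = p^{4k-3}(p-1)^2(p+1)$. A direct check shows this equals $\varphi(p^k)^2 \psi(p^k)\, p^k$, which multiplies across primes to the claimed formula for general $N$. The upper-triangular count is immediate, without even invoking CRT: an element of $\Delta$ is a triple $(a,b,d)$ with $a,d\in(\Z/N\Z)^\times$ and $b\in\Z/N\Z$, giving $|\Delta|=\varphi(N)^2 N$.

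For the number of cyclic subgroups of order $N$ in $(\Z/N\Z)^2$, I would count primitive vectors, i.e.\ those $(a,b)$ generating a cyclic group of order $N$. In the prime-power case $(a,b)$ is primitive in $(\Z/p^k\Z)^2$ iff at least one of $a,b$ is a unit, which excludes exactly the $p^{2k-2}$ vectors in $p(\Z/p^k\Z)^2$. Each cyclic subgroup of order $p^k$ has $\varphi(p^k)$ generators, so the number of such subgroups is $(p^{2k}-p^{2k-2})/\varphi(p^k) = p^{k-1}(p+1)=\psi(p^k)$, and multiplicativity finishes the count.

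For the transitivity, I would show that $\GL_2(\Z/N\Z)$ acts transitively on primitive vectors; then since every cyclic subgroup of order $N$ is generated by a primitive vector, transitivity on subgroups follows. Again it suffices to treat $N=p^k$: given a primitive $v=(a,b)$, at least one entry is a unit, so $v$ can be completed to a basis of $(\Z/p^k\Z)^2$ (pair with $(0,1)$ or $(1,0)$ accordingly), hence some $g\in\GL_2(\Z/p^k\Z)$ sends $e_1$ to $v$. The only real subtlety is the bookkeeping in the CRT step for cyclic subgroups, where one must check that a subgroup of order $N$ in $\prod (\Z/p^k\Z)^2$ really is a product of cyclic subgroups of order $p^k$ in each factor; this follows from the fact that the projection of a cyclic group of order $N$ onto each $p$-primary component is cyclic of order $p^k$. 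None of the steps should present a serious obstacle—this is essentially bookkeeping with the standard structure theory of finite abelian groups.
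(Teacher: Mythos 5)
The paper states this lemma without proof, treating it as a standard group-theoretic fact, so there is no argument of the paper's to compare yours against. Your proof is correct and complete. The CRT reduction, the size $p^{4(k-1)}$ of the kernel of $\GL_2(\Z/p^k\Z)\twoheadrightarrow\GL_2(\F_p)$, the direct count $\varphi(N)^2N$ for upper-triangular matrices, the primitive-vector count $(p^{2k}-p^{2k-2})/\varphi(p^k)=\psi(p^k)$, and the completion of a primitive vector to a basis all check out, and you correctly flag the one step that needs a word — that a cyclic subgroup of order $N$ in $\prod(\Z/p^k\Z)^2$ is the product of its $p$-primary projections, each cyclic of order $p^k$ — and justify it. (You also read the slightly ambiguous phrase ``cyclic subgroups of $\Z/N\Z\times\Z/N\Z$'' as cyclic subgroups \emph{of order $N$}, which is the intended meaning given the count $\psi(N)$ and the sentence that follows.) This is the standard argument one would give; nothing is missing.
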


\begin{lem}\label{lem:serre_index_bound}
	Let $E/K$ be an elliptic curve, $N$ an integer, and $\Phi \subset E[N]$ a cyclic subgroup of $N$--torsion points.
	Put $B = |\{ \sigma(\Phi) : \sigma \in \Gal(\bar K/K)\}|$. Then we have
	\begin{displaymath}
		\frac{\psi(N)}B \le \left[ \GL_2(\Z/N\Z) : \rho_N(G_K) \right].
	\end{displaymath}
\end{lem}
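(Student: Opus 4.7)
The plan is to deduce the inequality from orbit–stabilizer applied to the action of $\GL_2(\Z/N\Z)$ on the set $X$ of cyclic subgroups of order $N$ in $(\Z/N\Z)^2 \simeq E[N]$. By the previous lemma, this action is transitive and $|X|=\psi(N)$, so the stabilizer $\Delta_\Phi$ of $\Phi$ has index $\psi(N)$ in $\GL_2(\Z/N\Z)$ (it is a conjugate of the upper-triangular subgroup $\Delta$).

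Set $H := \rho_N(G_K) \subset \GL_2(\Z/N\Z)$. By definition, $B$ is the size of the $H$-orbit of $\Phi$ inside $X$, which by orbit–stabilizer equals
\begin{equation*}
B \;=\; [H : H \cap \Delta_\Phi] \;=\; \frac{|H \Delta_\Phi|}{|\Delta_\Phi|},
\end{equation*}
where the last equality is the standard identification of the $H$-orbit of $\Phi$ with the set of cosets $H\Delta_\Phi/\Delta_\Phi$ in $\GL_2(\Z/N\Z)/\Delta_\Phi$.

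Combining this with $\psi(N) = |\GL_2(\Z/N\Z)|/|\Delta_\Phi|$ gives
\begin{equation*}
\frac{\psi(N)}{B} \;=\; \frac{|\GL_2(\Z/N\Z)|}{|H\Delta_\Phi|}.
\end{equation*}
Since $H \subset H\Delta_\Phi$, we have $|H\Delta_\Phi| \ge |H|$, so the right-hand side is bounded above by $|\GL_2(\Z/N\Z)|/|H| = [\GL_2(\Z/N\Z) : \rho_N(G_K)]$, which is the desired inequality.

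There is essentially no obstacle: the argument is a one-line application of orbit–stabilizer once one recognizes that $B$ is an orbit size, $\psi(N)$ is the total number of cyclic subgroups of order $N$, and the stabilizer subgroup $\Delta_\Phi$ has index $\psi(N)$ in $\GL_2(\Z/N\Z)$. The only thing to be mildly careful about is the identification $B = |H\Delta_\Phi|/|\Delta_\Phi|$, which follows from viewing the $H$-orbit of $\Phi$ inside the quotient $\GL_2(\Z/N\Z)/\Delta_\Phi$ via the bijection between $X$ and this quotient provided by transitivity of the full group action.
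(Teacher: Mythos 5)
Your proof is correct and uses essentially the same argument as the paper: both identify $B$ as the size of the $\rho_N(G_K)$-orbit of $\Phi$, i.e.\ $B = [H : H \cap \Delta_\Phi]$ where $\Delta_\Phi$ is the stabilizer, and then bound $\psi(N)/B$ by $|\GL_2(\Z/N\Z)|/|H|$. The paper phrases this more concretely (fixing a basis so $\Delta_\Phi$ is the group of upper-triangular matrices and bounding $|H\cap\Delta_\Phi|\le|\Delta_\Phi|$), while you route it through the product-set identity $|H\Delta_\Phi| = |H||\Delta_\Phi|/|H\cap\Delta_\Phi|$, but the two are the same algebraic manipulation.
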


\begin{proof}
	Suppose $\Phi$ is generated by $P \in E[N]$. After choosing a basis, we may assume that $P$ corresponds
	to $(1,0)$ in $\Z/N\Z \times \Z/N\Z$. For any $\sigma \in \Gal(\bar K/K)$, the group $\sigma(\Phi)$
	is generated by a point $(a,c) \in Z/N\Z \times \Z/N\Z$
	where $\left(\begin{smallmatrix}a&b\\c&d\end{smallmatrix}\right)$ is the image of $\sigma$ under $\rho_N$.	\\
	Let $\Delta$ be the subgroup of upper triangular matrices of $\GL_2(\Z/N\Z)$. The equality $\sigma(\Phi) = \Phi$
	holds if and only if $\sigma$ is mapped into $\Delta$ under $\rho_N$. We thus have
	\begin{displaymath}
		B = \frac{\#\im \rho_N}{\#(\Delta \cap \im \rho_N)} \ge \frac{\#\im \rho_N}{\#\Delta} = \frac{\#\im \rho_N}{N\varphi(N)^2}.
	\end{displaymath}
	This implies
	\begin{align*}
		\frac{\psi(N)}B \le \frac{\psi(N)\varphi(N)^2 N}{\#\im \rho_N}
		&= \frac{\psi(N)\varphi(N)^2 N}{\#\GL_2(\Z/N\Z)} \left[ \GL_2(\Z/N\Z) : \rho_N(G_K) \right]	\\
		&=\left[ \GL_2(\Z/N\Z) : \rho_N(G_K) \right],
	\end{align*}
	as desired.
\end{proof}

We want to estimate a Mertens' type of sum. In fact, we are going to use a result by Mertens.

\begin{lem}
	Let $n \ge 4$ be a positive integer. Then
	\begin{displaymath}
		\sum_{p | n} \frac{\log p}p \le 5.25 \log\log n, 
	\end{displaymath}
	where the sum runs over all prime divisors of $n$.
\end{lem}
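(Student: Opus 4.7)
The plan is to apply an explicit form of Mertens' second theorem, namely a bound of the shape $\sum_{p \le x} \frac{\log p}{p} \le \log x + C_0$ valid for all $x \ge 2$ with a small absolute constant $C_0$ (as provided, for instance, by Rosser--Schoenfeld). The obstacle is that the primes appearing in our sum are the prime divisors of $n$, which can individually be much larger than $\log n$, so Mertens cannot be applied directly to the whole sum.

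The key idea is a dyadic-style splitting: write $\sum_{p | n} \frac{\log p}{p} = S_1 + S_2$, where $S_1$ sums over prime divisors with $p \le \log n$ and $S_2$ over those with $p > \log n$. For $S_1$, enlarging the summation range to all primes up to $\log n$ and invoking Mertens gives $S_1 \le \log\log n + C_0$. For $S_2$, if $p_1<\dotsb<p_k$ are the prime divisors of $n$ exceeding $\log n$, then $\prod_i p_i$ divides $n$, so $(\log n)^k < n$, yielding $k \le \log n / \log\log n$. Since $x \mapsto (\log x)/x$ is decreasing for $x \ge e$, each such term contributes at most $(\log\log n)/\log n$, so $S_2 \le 1$ (provided $\log n \ge e$, that is, $n \ge 16$).

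Combining the two estimates produces $\sum_{p | n} \frac{\log p}{p} \le \log\log n + C_0 + 1$, which is bounded by $5.25 \log\log n$ as soon as $\log\log n \ge (C_0+1)/4.25$, i.e., from some explicit threshold onward. To finish I will handle the remaining small values of $n$ (a short and explicit list starting at $n=4$) by direct computation, observing that in this range $n$ has only one or two prime divisors and the inequality can be verified term by term.

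The main difficulty is quantitative: obtaining the specific constant $5.25$ requires one to fix an explicit value of $C_0$ and to verify the small cases compatibly with it. The factor $5.25$ is deliberately loose so that the overhead $C_0 + 1$ can be absorbed comfortably once $\log\log n$ is not too small; a more careful choice of the splitting threshold would give a sharper leading coefficient, but $5.25$ is sufficient for subsequent use.
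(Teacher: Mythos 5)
Your proof is correct, and it takes a genuinely different route from the paper's. The paper replaces the prime divisors of $n$ by the first $\omega(n)$ primes (after handling the edge case $2\nmid n$, $3\mid n$), bounds $p_{\omega(n)}$ via Rosser's explicit estimate $p_k \le 2k\log k$, applies Mertens to the resulting range, and finally uses the trivial bound $\omega(n) \le \log n/\log 2$; this gives something of the form $2\log\log n + 2\log\log\log n + O(1)$. You instead split at the threshold $\log n$: Mertens handles the small prime divisors with $S_1 \le \log\log n + C_0$, and the large ones are so few (at most $\log n/\log\log n$, since their product divides $n$) and so small individually ($\log p/p \le \log\log n/\log n$ once $p > \log n \ge e$) that $S_2 \le 1$. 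Your decomposition avoids the Rosser bound entirely and even gives the asymptotically sharper leading coefficient $1$ in place of the paper's $2$, at essentially no extra cost. Both proofs need to check a finite list of small $n$ by hand --- in your case $4 \le n \le 15$, where $n$ has at most two prime divisors --- and you correctly flag this. The only detail worth being explicit about when writing it up is the choice of $C_0$: with Mertens' classical $C_0 = 2$ one needs $4.25\log\log n \ge 3$, which holds for $n \ge 8$ and in particular throughout the range $n \ge 16$ where your $S_2$ argument is valid, so the pieces fit together with room to spare.
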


\begin{proof}
	The function $\log x/x$ is decreasing on $(e,\infty)$.
	Note that $(\log 2)/2 < (\log 3)/3$. So let $n = p^a$ be a prime power with $p \not= 2$. Then
	\begin{displaymath}
		\frac{\log p}p \le \frac{\log 3}3 \le 5\log\log 3 \le 5\log\log p
	\end{displaymath}
	and the claim holds. If $n = 2^a$ with $a \ge 2$, then
	\begin{displaymath}
		\frac{\log 2}2 < 1 \le 5\log\log(4).
	\end{displaymath}
	Now let $n = p^a q^b$ with different primes $p,q$ and $a,b \ge 1$. We have $(\log 5)/5 < (\log 2)/2$
	and $(\log p)/p < 0.5$. Thus
	\begin{displaymath}
		\frac{\log p}p + \frac{\log q}q \le \frac{\log 2}2 + \frac{\log 3}3 \le 1 < 5\log\log 6 \le 5\log\log n.
	\end{displaymath}
	So the claim is true for all $4\le n \le 29$.
	let us now assume that $n$ is composite with $\omega(n) \ge 3$.
	We can bound the sum by looking at the first $\omega(n)$ primes
	\begin{displaymath}
		\sum_{p | n} \frac{\log p}p
		\le \frac{\log p_1}{p_1} + \frac{\log p_2}{p_2} + \cdots + \frac{\log p_{\omega(n)}}{p_{\omega(n)}}.
	\end{displaymath}
	Note that $(\log 2)/2 < (\log 3)/3$, so that if $3$ occurs in the prime decomposition of $n$ and $2$
	does not, we can just estimate the largest prime divisor of $n$ by $(\log 2)/2$ and get the same inequality.
	It is a well--known result by Cipolla in \cite{cipolla1902determinazione}, that the $n$--th prime $p_n$ is
	bounded from above by $n(\log n + \log \log n)$ for sufficiently large $n$.
	Indeed Rosser proved in Theorem 2 of \cite{rosser1939n} that $p_n \le n(\log n + 2\log\log n)$ for all $n \ge 4$.
	Also compare to the bound in \cite{rosser1962approximate}.
	Hence $p_n \le 2n\log n$ for all $n \ge 3$ since this bound also holds for $p_3 = 5$.
	Since we have $\omega(n) \ge 3$ we can apply this to the last inequality to obtain
	\begin{displaymath}
		\sum_{p | n} \frac{\log p}p \le \sum_{p \le 2\omega(n)\log\omega(n)} \frac{\log p}{p}.
	\end{displaymath}
	By Mertens' Theorem (see \cite{mertens1874beitrag}) the sum on the right--hand side is bounded by
	\begin{displaymath}
		\sum_{p \le 2\omega(n)\log\omega(n)} \frac{\log p}{p} \le 2\log(2\omega(n)\log\omega(n))
	\end{displaymath}
	for all $n \ge 1$ composite of at least $3$ distinct primes.
	We have the trivial inequality
	\begin{displaymath}
		\omega(n) \le \frac{\log n}{\log 2}.
	\end{displaymath}
	This gives us
	\begin{align*}
		\sum_{p | n} \frac{\log p}p
		&\le 2\log\left(2\frac{\log n}{\log 2}\log\frac{\log n}{\log 2}\right)	\\
		&\le 2\log\log n + 2\log\left(\log \log n - \log \log 2\right) + 2.12
	\end{align*}
	and if $n \ge 5$ this gets us
	\begin{align*}
		\sum_{p | n} \frac{\log p}p
		&\le 2\log\log n + 2\log\left(2\log \log n\right) + 2.12	\\
		&\le 2\log\log n + 2\log\log\log n + 3.51.
	\end{align*}
	But we have $\log\log n \le \frac{36}{100} \log n$ since $x \mapsto (\log \log x)/\log x$ is decreasing
	for $x \ge 16$ and $(\log \log 30)/(\log 30) < 0.36$.
	Because  of $3.51 + 2\log 0.36 < 1.25 \log \log 30$ we obtain
	\begin{displaymath}
		\sum_{p \le 2\omega(n)\log\omega(n)} \frac{\log p}{p} \le 5.25\log\log n,
	\end{displaymath}
	as desired.
\end{proof}

\begin{prop}
	Let $E/\bar{\Q}$ and $E_0/K$ be elliptic curves without CM such that there exists a cyclic isogeny
	of degree $N$ from $E_0$ to $E$.
	Let $\rho_N$ be the Galois representation associated to $E_0$.
	If $N \ge 4$, we have
	\begin{displaymath}
		h(E) \ge h(E_0) + \frac 12 \log N - 7 \cdot [\GL_2(\Z/N\Z):\rho_N(G_K)]\log\log N.
	\end{displaymath}
\end{prop}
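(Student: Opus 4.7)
The natural starting point is the Faltings isogeny formula for elliptic curves (sharpened by Raynaud via the connected--\'etale sequence of the kernel viewed as a finite flat group scheme). Passing, if necessary, to a finite extension $L/K$ over which $\Phi = \ker\phi$ is defined and both $E_0$ and $E$ acquire semistable reduction, the formula takes the shape
\[
    h(E) - h(E_0) = \tfrac{1}{2}\log N - \delta(\Phi),
\]
with a non-negative correction term
\[
    \delta(\Phi) = \frac{1}{[L:\Q]}\sum_{\mathfrak{p}} [L_\mathfrak{p}:\Q_p]\log|\Phi^0_\mathfrak{p}|,
\]
where $\mathfrak{p}$ ranges over the finite places of $L$ and $\Phi^0_\mathfrak{p}$ denotes the identity component of the schematic closure of $\Phi$ inside the N\'eron model of $E_0$ over $\mathcal{O}_{L_\mathfrak{p}}$. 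Since the Faltings height is a $\bar\Q$--isomorphism invariant, the proposition reduces to establishing
\[
    \delta(\Phi) \le 7\,I\,\log\log N, \quad I := [\GL_2(\Z/N\Z) : \rho_N(G_K)].
\]

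I would first decompose $\Phi = \bigoplus_{p\mid N}\Phi_p$ into $p$-primary components and correspondingly write $\delta(\Phi) = \sum_{p\mid N}\delta_p(\Phi_p)$. At each prime, the trivial estimate $\delta_p(\Phi_p) \le v_p(N)\log p$ holds and is saturated only in restricted geometric situations: at places $\mathfrak{p}\mid p$ of good ordinary or multiplicative reduction there is a unique canonical connected subgroup of order $p^{v_p(N)}$, and saturation at $\mathfrak{p}$ forces $\Phi_p$ to coincide with it. The Galois-theoretic input is that $\Phi^0_{p,\mathfrak{p}}$ must be stabilized by the decomposition group at $\mathfrak{p}$, so a large connected part forces the image of $\rho_{p^{v_p(N)}}$ restricted to this decomposition group into a Borel subgroup. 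Combining such Borel restrictions across primes $p\mid N$ with Lemma \ref{lem:serre_index_bound} applied to the Galois orbits of the $\Phi_p$ bounds the set $S$ of ``bad'' primes (those where $\Phi_p$ nearly saturates the trivial estimate at all places above $p$) in terms of $I$.

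The aggregate bad contribution then has the form $\sum_{p\in S} v_p(N)\log p$, which after a reorganization absorbing the index $I$ becomes a sum of shape $I\cdot\sum_{p\in S}\tfrac{\log p}{p}\cdot(\text{harmless factor})$. The Mertens-type estimate established just above bounds $\sum_{p\mid n}\tfrac{\log p}{p}$ by $5.25\log\log n$, which combined with the other constants delivers the factor $7$ in the final bound $\delta(\Phi)\le 7I\log\log N$.

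The main obstacle is the local-to-global quantification: making precise how much allowing a nontrivial connected part of $\Phi_p$ at a place $\mathfrak{p}\mid p$ costs in the index of $\rho_N(G_K)$ in $\GL_2(\Z/N\Z)$, and treating separately the three reduction types of $E_0$ (ordinary, multiplicative, supersingular) at primes dividing $N$. Once this trade-off between local connectedness and global Galois image is set up cleanly, the remainder is organizational: decomposing $\delta$, invoking Lemma \ref{lem:serre_index_bound}, and applying the Mertens bound.
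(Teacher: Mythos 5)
Your proposal heads in a genuinely different direction from the paper, and the direction you chose has a real gap.

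The paper does not apply the Faltings--Raynaud isogeny formula to the single isogeny $E_0\to E$ and then try to control the correction term $\delta(\Phi)$ by analyzing the connected--\'etale decomposition of the kernel at each prime. Instead it invokes Autissier's averaged version (Corollaire 3.3 of \cite{autissier}): if $E_1,\ldots,E_{\psi(N)}$ are representatives of all curves $N$--isogenous to $E_0$, then
\[
\frac 1{\psi(N)}\sum_{i=1}^{\psi(N)} h(E_i) = h(E_0) + \frac 12 \log N - \lambda_N,
\qquad
\lambda_N = \sum_{p^a\Vert N} \frac{p^{a}-1}{(p^2-1)p^{a-1}}\log p .
\]
Here $\lambda_N$ is completely explicit and is bounded by $\frac 43\sum_{p\mid N}\frac{\log p}{p}\le 7\log\log N$ via the Mertens--type lemma. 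The Galois image enters only through the trivial observation that $\Gal(\bar\Q/K)$ permutes $\{E_1,\ldots,E_{\psi(N)}\}$: grouping the $E_i$ into Galois orbits, using $|h(E_0)-h(E_i)|\le\frac12\log N$ for the other orbits, and applying Lemma~\ref{lem:serre_index_bound} to bound $\psi(N)/n_1$ by the index $I$ yields $h(E_1)\ge h(E_0)+\frac12\log N - \frac{\psi(N)}{n_1}\lambda_N$, and the claim follows. The averaging makes all the local subtleties disappear: one never needs to know anything about how $\Phi$ sits inside the $p$--divisible group at places above $p$.

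Your plan has to face exactly those subtleties head on, and this is where the gap sits. You say the main obstacle is ``making precise how much allowing a nontrivial connected part of $\Phi_p$ at a place $\mathfrak{p}\mid p$ costs in the index of $\rho_N(G_K)$''; this is not merely an organizational step to be deferred, it is the entire content of the claim. Having the decomposition group at $\mathfrak{p}$ land in a Borel of $\GL_2(\Z/p^{v_p(N)}\Z)$ is a \emph{local} constraint on $\rho_N$; it does not by itself bound the \emph{global} index $[\GL_2(\Z/N\Z):\rho_N(G_K)]$ from below, because the global image can be large even if every decomposition group above $p$ is reducible (this already happens for ordinary CM--like local behaviour at split primes). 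Moreover, the passage from ``$\Phi^0_\mathfrak{p}$ is stabilized by decomposition at $\mathfrak p$'' to a bound of the shape $\delta(\Phi)\le I\cdot\sum_{p}\frac{\log p}{p}\cdot(\text{harmless})$ is asserted but not derived; the naive bound on the ``bad'' primes gives $\sum_{p\in S}v_p(N)\log p$, which is of order $\log N$ and cannot be reorganized into a $\log\log N$--size quantity without some new idea. Lemma~\ref{lem:serre_index_bound} in the paper controls the size of the \emph{Galois orbit of a fixed cyclic subgroup}, not the reducibility of the local representations, and there is no direct way to feed it your local information. So the ingredients you list are the right ones, but the bridge between them is missing, and it is precisely the bridge that Autissier's averaged formula makes unnecessary.
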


\begin{proof}
	We denote by $h(E)$ and $h(E_0)$ the stable Faltings height of $E$ and $E_0$, respectively.
	(The stated inequality does not depend on the normalization of the Faltings height.)
	We consider the action of $\Gal(\bar\Q/K)$ on the set of $\bar\Q$--isomorphism classes of
	elliptic curves.
	Let $E = E_1, \ldots, E_{\psi(N)}$ be representatives of elliptic curves that are $N$--isogenous
	to $E_0$.
	Note that the group $\Gal(\bar\Q/K)$ acts on the set $\{ E_1, \ldots, E_{\psi(N)} \}$.
	By Corollaire 3.3 in \cite{autissier} we have
	\begin{displaymath}
		\frac 1{\psi(N)}\sum_{i=1}^{\psi(N)} h(E_i) = h(E_0) + \frac 12 \log N - \lambda_N
	\end{displaymath}
	where $N = p_1^{\alpha_1} \cdots p_r^{\alpha_r}$ and
	$\lambda_N = \sum_{i=1}^r \frac{p_i^{\alpha_i} - 1}{(p_i^2-1)p_i^{\alpha_i-1}}\log p_i$.
	Rearranging and using $|h(E_0)-h(E_i)| \le 1/2 \log N$ (e.g.~\cite[Corollaire 2.1.4, page 207]{raynaud})
	we obtain
	\begin{align*}
		\frac{n_1}{\psi(N)} h(E_1)
		&\ge h(E_0) + \frac 12 \log N - \lambda_N - \sum_{j} \frac{n_j}{2\psi(N)}\log N - \sum_{j} \frac{n_j}{\psi(N)}h(E_0)\\
		&= h(E_0) + \frac 12 \log N - \lambda_N - \frac{\psi(N)-n_1}{2\psi(N)}\log N - \frac{\psi(N)-n_1}{\psi(N)}h(E_0)	\\
		&= \frac{n_1}{\psi(N)}h(E_0) + \frac{n_1}{2\psi(N)}\log N - \lambda_N,
	\end{align*}
	where we have grouped curves into $\Gal(\bar\Q/K)$--orbits, each of size $n_j$.
	The number $n_1$ is the number of elliptic curves up to $\bar\Q$--isomorphism that
	are in the $\Gal(\bar\Q/K)$--orbit of $E_1$.
	This implies
	\begin{displaymath}
		h(E_1) \ge h(E_0) + \frac 12\log N - \frac{\psi(N)}{n_1}\lambda_N.
	\end{displaymath}
	We have
	\begin{displaymath}
		\frac{p_i^{\alpha_i} - 1}{(p_i^2-1)p_i^{\alpha_i-1}} \le \frac{p_i^{\alpha_i}}{(p_i^2-1)p_i^{\alpha_i-1}}
		= \frac{p_i}{p_i^2-1} \le \frac 4{3p_i}.
	\end{displaymath}
	It follows from the last lemma and Lemma \ref{lem:serre_index_bound} that
	\begin{displaymath}
		h(E_1) \ge h(E_0) + \frac 12\log N - \frac{21}3\left[\GL_2(\Z/N\Z):\rho_N(G_K)\right]\log\log N,
	\end{displaymath}
	as desired.
\end{proof}

\begin{cor}\label{cor:lowerbound}
	In the setting of the previous proposition,
	let $j_0$ and $j$ be the $j$--invariants of $E_0$ and $E$, respectively.
	We have
	\begin{align*}
		h(j_0) &- 6\log(1+h(j_0))
		+ 6\log N - 84\left[\GL_2(\Z/N\Z):\rho_N(G_K)\right]\log\log N	\\
		&\le h(j) + 16.212
	\end{align*}
\end{cor}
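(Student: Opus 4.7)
The plan is to multiply the previous proposition by $12$ and translate both sides from Faltings heights to Weil heights of $j$-invariants using an explicit Silverman-type comparison. Recall that the previous proposition yields
\begin{equation*}
    h(E) \ge h(E_0) + \frac{1}{2}\log N - 7[\GL_2(\Z/N\Z):\rho_N(G_K)]\log\log N,
\end{equation*}
where $h$ denotes the (stable) Faltings height. Since the target inequality has $6\log N$ and $84[\cdots]\log\log N$, the factor of $12$ is the right scaling, and the job is to pass from Faltings height to $j$-invariant height on both ends.

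The key ingredient I would invoke is the classical comparison between the stable Faltings height of an elliptic curve $E/\bar\Q$ and the absolute logarithmic height of its $j$-invariant, in an explicit form (Silverman, or Gaudron--Rémond), namely a pair of bounds of the shape
\begin{equation*}
    \frac{1}{12}h(j(E)) - c_1 \le h_F(E) \le \frac{1}{12}h(j(E)) + \frac{1}{2}\log(1+h(j(E))) + c_2
\end{equation*}
with explicit constants $c_1, c_2$. Applying the \emph{upper} bound to $E_0$ gives an estimate $h_F(E_0) \ge \frac{1}{12}h(j_0) - \frac{1}{2}\log(1+h(j_0)) - c_2$, and applying the \emph{lower} bound to $E$ gives $h_F(E) \le \frac{1}{12}h(j) + c_1$. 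One small technical point: the paper's Faltings height is the Deligne normalization, which lies above the original one, so the direction of the comparison must be checked on both sides; the author has already remarked in Section 3 that for Lombardo's inequality one can freely substitute, and the same care is needed here.

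Putting these three inequalities together yields
\begin{equation*}
    \frac{1}{12}h(j) + c_1 \ge \frac{1}{12}h(j_0) - \frac{1}{2}\log(1+h(j_0)) - c_2 + \frac{1}{2}\log N - 7[\cdots]\log\log N,
\end{equation*}
and then multiplying throughout by $12$ gives the desired estimate with an absolute constant $12(c_1+c_2)$. The main obstacle is purely bookkeeping: selecting the form of Silverman's comparison with explicit constants whose sum, after multiplication by $12$, is at most $16.212$. No new analytic input is required beyond the explicit version of the Faltings-vs-$j$-height comparison and the inequality from the previous proposition.
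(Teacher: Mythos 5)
Your strategy is essentially the paper's: convert the Faltings--height inequality from the previous proposition into a $j$--height inequality by applying explicit Faltings--versus--$j$ comparisons to $E_0$ and $E$, multiply by $12$, and rearrange.  That said, there are two things worth correcting.  First, the comparison pair you wrote has the $\log(1+h(j))$ term with the wrong sign on the upper bound: in Silverman--type estimates both sides of the comparison carry a $-\tfrac12\log(1+h(j))$ correction, i.e.\ the shape is
\[
\tfrac1{12}h(j)-\tfrac12\log(1+h(j))-c_1 \;\le\; h_F(E)\;\le\;\tfrac1{12}h(j)-\tfrac12\log(1+h(j))+c_2 ,
\]
and it is precisely because the correction is negative on the $E$--side that the paper may drop it there and keep no $\log(1+h(j))$ term in the final statement.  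Consequently your sentences ``applying the upper bound to $E_0$ gives $h_F(E_0)\ge\cdots$'' and ``the lower bound applied to $E$ gives $h_F(E)\le\cdots$'' have the inequality directions inverted: one needs the lower bound on $h_F(E_0)$ (to transfer to a lower bound on $h(j_0)$) and the upper bound on $h_F(E)$.  Second, reaching exactly $16.212$ is not achievable with a generic Silverman comparison; the paper obtains it by using Proposition~3.2 of Loebrich (constant $2.071$) on the $E_0$--side and Lemme~7.8 of Gaudron--R\'emond (constant $0.72$) on the $E$--side, together with the $\tfrac12\log\pi$ normalization adjustments; you would need these specific references, not just the bound in Silverman's 1986 paper, to hit that constant.
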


\begin{proof}
	Compare the proof to Proposition 2.1 in \cite{silverman86heightsEC}.
	Using Proposition 3.2 of \cite{loebrich} in the first step and Lemme 7.8 of \cite{gaudronremond}
	on the third we obtain
	\begin{align*}
		\frac 1{12}h(j_0) &- \frac 12\log(1+h(j_0)) - 2.071
		+\frac 12\log N - 7\left[\GL_2(\Z/N\Z):\rho_N(G_K)\right]\log\log N	\\
		&\le h(E_0) +\frac 12\log N - 7\left[\GL_2(\Z/N\Z):\rho_N(G_K)\right]\log\log N
			+ \frac 12 \log\pi	\\
		&\le h(E) + \frac 12 \log\pi	\\
		&\le \frac 1{12}h(j) - \frac 12\log(1+h(j)) -0.72	\\
		&\le \frac 1{12}h(j) - 0.72.
	\end{align*}
	Note that the authors of both cited papers use the normalization of the Faltings height of Deligne.
	Multiplying the inequality by $12$ and rearranging the terms yields the desired inequality.
\end{proof}

In the proof of the next lemma we will use the function
\begin{displaymath}
	D(z) = \max\{1,\lvert \Re(z)\rvert, \Im(z)^{-1} \},	\qquad \text{ for all } z \in \H.
\end{displaymath}
It appears in \cite{habBeyAO}. Note that if $z$ is in $\bar\F$, then $D(z) \le 2/\sqrt{3}$.
The height of an element in $\Mat_2(\Q)$ will be the height of that element when
regarded as a member of $\Q^4$.
The following result can be found in a slightly weaker form in \cite{habBeyAO}.

\begin{lem}\label{lem:ht_rhoz}
	If $z \in \H\setminus\sl2z\zeta$, then for any $\rho \in \sl2z$ with $\rho z\in \F$
	we have $H(\rho) \le 264 D(z)^9$. If $z \in \sl2z\zeta$, then we have $H(\rho)\le 1056D(z)^9$
	for any $\rho \in \sl2z$ with $\rho z \in \F$.
\end{lem}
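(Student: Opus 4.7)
The plan is to leverage the transformation formula $\Im(\rho z) = \Im(z)/|cz+d|^2$ together with the defining inequalities of $\F$ to control each entry of $\rho = \begin{smatrix}a&b\\c&d\end{smatrix}$ directly in terms of $D(z)$. Write $z = x+iy$. Membership $\rho z \in \F$ yields $\Im(\rho z) \ge \sqrt{3}/2$, $|\Re(\rho z)| \le 1/2$, and $|\rho z| \ge 1$, and that is essentially all the geometric input we need.

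First I would dispatch the trivial case $c=0$: then $ad=1$ forces $a=d=\pm 1$ and $\rho z = z\pm b$, so $|b| \le |x|+1/2 \le D(z)+1/2$, giving $H(\rho)\le D(z)+1/2$ at once. Assume $c\neq 0$. From $\Im(\rho z)\ge \sqrt{3}/2$ I get $|cz+d|^2 \le 2y/\sqrt{3}$. Splitting this as $c^2y^2 + (cx+d)^2 \le 2y/\sqrt{3}$ produces simultaneously the bounds $|c| \le (2/\sqrt{3})^{1/2} y^{-1/2} \le (2/\sqrt{3})^{1/2} D(z)^{1/2}$ and $|cx+d| \le (2y/\sqrt{3})^{1/2}$. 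Because $|c|\ge 1$, the first forces $y\le 2/\sqrt{3}$, so $y^{1/2}$ is bounded by a constant. Combined with $|x|\le D(z)$, this gives $|d| \le |c||x| + (2y/\sqrt{3})^{1/2} = O(D(z)^{3/2})$.

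For the top row I would exploit the identity $a - c\rho z = 1/(cz+d)$, which is immediate from $\det\rho = 1$. Since $|cz+d|\ge |c|y \ge y$ and $\Im(\rho z) = y/|cz+d|^2 \le 1/(c^2 y) \le 1/y \le D(z)$, together with $|\Re(\rho z)|\le 1/2$, we obtain $|\rho z| \le 1/2 + D(z)$ and $|a - c\rho z|\le 1/y \le D(z)$. Hence $|a| \le |c|(1/2+D(z)) + D(z) = O(D(z)^{3/2})$. For $|b|$ I use $az + b = \rho z\cdot(cz+d)$ to deduce $|ax+b| \le |az+b| \le (1/2+D(z))\,(2y/\sqrt{3})^{1/2}$, and then $|b|\le |a||x| + O(D(z)) = O(D(z)^{5/2})$. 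Collecting, $H(\rho) = \max\{1,|a|,|b|,|c|,|d|\}$ is $O(D(z)^{5/2})$, which (using $D(z)\ge 1$) sits well below the claimed $264\,D(z)^9$.

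For $z\in \sl2z\zeta$, the stabilizer of $\zeta$ in $\sl2z$ is cyclic of order $6$, generated by $\begin{smatrix}1&-1\\1&0\end{smatrix}$; together with the possibility of reducing either to $\zeta$ or to $\zeta^2 = T^{-1}\zeta$, any two valid matrices $\rho$ differ by multiplication with a matrix whose entries are bounded by $1$ in absolute value. Such multiplication inflates the height by at most a factor of $4$, which accounts for the weaker bound $H(\rho)\le 1056\,D(z)^9$. I expect the main obstacle not to be the underlying geometry—that falls out of the $\Im$-transformation formula and $\det\rho=1$ almost mechanically—but rather the bookkeeping of numerical constants and the clean separation of the stabilizer cases; one has to be careful that each application of $|c|\ge 1$, $y\le 2/\sqrt{3}$, and $|x|\le D(z)$ is used at the right moment in the $c\neq 0$ analysis so as not to lose the polynomial control in $D(z)$.
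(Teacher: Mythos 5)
Your proof is correct but takes a genuinely different route from the paper. The paper's proof is essentially a citation: it invokes the Habegger--Pila result, which gives $H(\rho) \le 264\,D(z)^9$ for \emph{some} $\rho$ with $\rho z \in \F$, and then uses the isotropy subgroup $\sl2z_z$ to pass from ``some $\rho$'' to ``every $\rho$''; the extra factor $4$ for $z \in \sl2z\zeta$ comes from the entries of the order-$6$ generator $\begin{smatrix}0&-1\\1&1\end{smatrix}$ of $\sl2z_\zeta$. Your argument is instead a direct, self-contained computation from $\Im(\rho z) \ge \sqrt{3}/2$, $|\Re(\rho z)| \le 1/2$, $\det\rho = 1$ and the identity $a - c\rho z = 1/(cz+d)$; tracked carefully, the chain $|c| \le (2/\sqrt{3})^{1/2}D(z)^{1/2}$, $|d| = O(D(z)^{3/2})$, $|a| = O(D(z)^{3/2})$, $|b| = O(D(z)^{5/2})$ yields $H(\rho) \lesssim 4.4\,D(z)^{5/2}$, which is a strictly stronger bound than the $264\,D(z)^9$ the lemma asserts, and it holds uniformly for all $z$. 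A consequence is that your closing paragraph about the stabilizer of $\zeta$ is unnecessary: you are not comparing $\rho$ to a reference matrix, so you never need the weaker constant $1056$, nor a case split on whether $z$ lies in $\sl2z\zeta$. Two small points worth noting: you implicitly use $D(z)\ge 1$ at several places to absorb lower-order terms like $2/\sqrt{3}$ into the leading power of $D(z)$, which is fine but should be said once explicitly, and it would be prudent to write out the numerical constant in your $O(D(z)^{5/2})$ bound to confirm it sits below $264$ (it does, comfortably).
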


\begin{proof}
	The result follows from the theory in Chapter 2 of \cite{DiamondShurman05}.
	The result of Habegger and Pila states, that $H(\rho) \le 264D(z)^9$ for some $\rho \in \sl2z$.
	Note that $H(\rho) = H(-\rho)$.
	If $\rho' \in \sl2z$ satisfies $\rho z = \rho'z$, then $\rho^{-1}\rho'$ lies
	in the isotropy subgroup $\sl2z_z$ of $z$.
	Assume $z$ is neither in the $\sl2z$--orbit of $i$ nor of $\zeta$.
	Then since $z$ is not an elliptic point
	we have $\{\pm I\}\sl2z_z = \{\pm I\}$ with $I$ the $2$--by--$2$ identity matrix.
	Thus we have $H(\rho) \le 264D(z)^9$ in any case.

	Assume $z \in \sl2z i$. Then $\rho^{-1}\rho' \in \sl2z_i = \langle S \rangle$
	with $S = \begin{smatrix}0&-1\\1&0\end{smatrix}$. But then $H(\rho) = H(\rho')$
	and the result follows.

	Finally, assume $z \in \sl2z \zeta$.
	Then $\rho^{-1}\rho' \in \sl2z_\zeta = \langle \begin{smatrix}0&-1\\1&1\end{smatrix}\rangle$,
	so that $H(\rho') \le 4H(\rho) \le 1056D(z)^9$.
\end{proof}

Let $N, m, n, l$ be integers satisfying $1 < N = mn$ and $0 \le l < n$. Then
we have $\lvert \Re\left(\frac{m\tau + l}n\right) \rvert \le N \left(\lvert\Re(\tau)\rvert + 1\right) \le N (D(\tau)+1)$,
and we can similarly bound the inverse of the imaginary part by $N (D(\tau) + 1)$.
Thus
\begin{equation}\label{eq:fd_dist}
	D\left(\frac{m\tau + l}n\right) \le N (D(\tau) + 1).
\end{equation}
We will use this for the proof of Lemma \ref{lem:linform}, but first

\begin{lem}\label{lem:tau_less_htj}
	We have $|\tau_0^\sigma|/[K:\Q] \le 3 \max\{1, h(j_0)\}$.
\end{lem}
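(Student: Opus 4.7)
The plan is to bound $\Im(\tau_0^\sigma)$ from above in terms of $|\sigma(j_0)|$ using the $q$--expansion of Klein's $j$--function at the cusp, then pass from $\Im(\tau_0^\sigma)$ to $|\tau_0^\sigma|$ using the geometry of the fundamental domain $\F$, and finally convert $|\sigma(j_0)|$ into $h(j_0)$ via the definition of the height.

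The key analytic ingredient I would use is the Fourier expansion $j(\tau) = q^{-1} + 744 + \sum_{n\ge 1} c_n q^n$ with $q = e^{2\pi i \tau}$. For $\tau \in \F$ one has $\Im(\tau) \ge \sqrt{3}/2$ and hence $|q| \le e^{-\pi\sqrt{3}}$. Combined with the Petersson--Rademacher growth bound on the coefficients, $c_n = O(e^{4\pi\sqrt{n}})$, this gives an absolute constant $C_1$ with $\bigl|\sum_{n\ge 1} c_n q^n\bigr| \le C_1$ uniformly on $\F$. The reverse triangle inequality then yields $|q|^{-1} \le |j(\tau)| + C$ with $C = 744 + C_1$, i.e.
\[
2\pi\,\Im(\tau) \le \log\bigl(|j(\tau)| + C\bigr) \le \log \max\{1,|j(\tau)|\} + C'
\]
for a slightly larger absolute constant $C'$.

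Applying this at $\tau = \tau_0^\sigma$, where $j(\tau_0^\sigma) = \sigma(j_0)$, gives $\Im(\tau_0^\sigma) \le (2\pi)^{-1}\bigl(\log\max\{1,|\sigma(j_0)|\} + C'\bigr)$. The term $\log\max\{1,|\sigma(j_0)|\}$ is one of the non--negative summands appearing in the definition of $[K:\Q]\,h(j_0)$, so it is bounded by $[K:\Q]\,h(j_0)$. Using further that $\tau_0^\sigma \in \F$ implies $|\Re(\tau_0^\sigma)| \le 1/2$ and therefore $|\tau_0^\sigma| \le \Im(\tau_0^\sigma) + 1/2$, and then dividing by $[K:\Q]\ge 1$, I would deduce
\[
\frac{|\tau_0^\sigma|}{[K:\Q]} \le \frac{1}{2[K:\Q]} + \frac{h(j_0)}{2\pi} + \frac{C'}{2\pi[K:\Q]},
\]
each summand on the right being a controlled multiple of $\max\{1,h(j_0)\}$.

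The only point that really needs attention is the numerical bookkeeping: one must pin down $C$ and $C'$ from the explicit expansion of $j$ (using, for example, $c_1 = 196884$ together with a geometric--series style tail estimate valid on all of $\F$) tightly enough that the final absolute constant comes out to be at most $3$. This is a routine but careful computation; no conceptual obstacle arises, since the available slack of $1/(2\pi) < 1$ in front of $h(j_0)$ is very comfortable.
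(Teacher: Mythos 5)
Your proof is correct and follows essentially the same route as the paper: bound the position of $\tau_0^\sigma$ in $\F$ by $\log\max\{1,|\sigma(j_0)|\}$, then observe that this single archimedean term is dominated by $[K:\Q]\,h(j_0)$. The only difference is that where you re-derive the estimate $\Im(\tau)\lesssim\log|j(\tau)|$ from the $q$--expansion, the paper simply cites the ready-made inequality $|\tau|\le\frac{3}{2}\log\max\{e,|j(\tau)|\}$ from Faisant--Philibert; your version is sharper (constant $\tfrac{1}{2\pi}$ in place of $\tfrac{3}{2}$) but equivalent for the purposes of the lemma.
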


\begin{proof}
	Put $D = [K:\Q]$. We have
	\begin{align*}
		|\tau_0^\sigma| \le \frac 32 \log \max\{e,|j(\tau_0^\sigma)|\}
	\end{align*}
	by Lemme 1 item (iv) in \cite{faisant1987quelques}. Thus
	\begin{align*}
		\frac{|\tau_0^\sigma|}D &\le \frac 32 \frac 1D \log \max\{e,|j(\tau_0^\sigma)|\}	\\
		&\le \frac 32 \frac 1D \left(1 + \log \max\{1,|j(\tau_0^\sigma)|\}\right)	\\
		&= \frac 32 \frac 1D \left(1 + \log \max\{1,|\sigma(j(\tau_0))|\}\right)	\\
		&\le \frac 32 \frac 1D \left(1 + \sum_{\nu \in M_K} d_\nu \log \max\{1,|j(\tau_0)|_\nu\}\right)	\\
		&\le \frac 32 + \frac 32 h(j_0).
	\end{align*}
	This gives the desired inequality.
\end{proof}

\begin{lem}\label{lem:linform}
	Let $E_0\colon y^2 = 4x^3 - g_2x - g_3$ be the Weierstrass form of an elliptic curve without complex multiplication
	defined over a number field $K$.
	Let $j_0$ be the $j$--invariant of $E_0$ and put $h = \max\{1, h(1,g_2,g_3), h(j_0)\}$.
	Let $\omega_1$ and $\omega_2$ be periods of the elliptic curve
	such that $\tau_0 = \omega_2/\omega_1$ is in $\F$.
	Suppose that $\xi$ is an algebraic number of degree $2$.
	Let $N, m, n, l$ be integers satisfying
	\begin{displaymath}
		N \ge \left({\max\{e^{18\pi h}, [K:\Q], {(4\cdot 10^{11} H(\xi)})^{20}\}} \right)^{1/20}
		=: \mathcal{N}(E_0,\xi),
	\end{displaymath}
	$N = mn$ and $0 \le l < n$.
	Let $\rho \in \sl2z$ satisfy $\rho\begin{smatrix}m&l\\0&n\end{smatrix}.\tau_0 \in \bar\F$.
	Write $\begin{smatrix}\alpha&\beta\\\gamma&\delta\end{smatrix} = \rho\begin{smatrix}m&l\\0&n\end{smatrix}$.
	Then there exists an explicit constant $c_1' \ge 1$ such that
	\begin{displaymath}
		\log \lvert (\alpha-\xi\gamma)\omega_2 + (\beta-\xi\delta)\omega_1 \rvert \ge -c_1' \cdot (\log N)^4.
	\end{displaymath}
	The constant $c_1'$ depends on the elliptic curve $E_0$.
\end{lem}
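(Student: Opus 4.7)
The plan is to recognize the quantity
\[
\Lambda := (\alpha-\xi\gamma)\omega_2 + (\beta-\xi\delta)\omega_1
\]
as a \emph{nonzero} linear form in the two fundamental periods of $E_0$ with algebraic coefficients, and to conclude by an explicit lower bound for linear forms in elliptic logarithms of the type due to David (or the refinement of Gaudron--R\'emond), whose characteristic shape $-c(\log B)^4$ is exactly what appears in the statement.

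\emph{Step 1 (nonvanishing).} Suppose towards a contradiction that $\Lambda = 0$. Dividing by $\omega_1$ and using $\tau_0 = \omega_2/\omega_1$ yields
\[
\xi \;=\; \frac{\alpha\tau_0 + \beta}{\gamma\tau_0 + \delta}.
\]
Because $E_0$ has no CM, Schneider's theorem on elliptic moduli tells us that $\tau_0$ is transcendental. The right-hand side is then a M\"obius transform of $\tau_0$ by an integer $\GL_2(\Z)$-matrix (the matrix $\rho\left(\begin{smallmatrix}m&l\\0&n\end{smallmatrix}\right)$), hence transcendental, contradicting the algebraicity of $\xi$. Thus $\Lambda\neq 0$.

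\emph{Step 2 (polynomial bounds on the coefficients).} I would next bound the height and the field of definition of $\alpha-\xi\gamma$ and $\beta-\xi\delta$. Put $\tau_M = (m\tau_0+l)/n$; by \eqref{eq:fd_dist} together with $\tau_0 \in \F$ (so $D(\tau_0) \le 2/\sqrt{3}$) one gets $D(\tau_M) \le N(D(\tau_0)+1)$, and then Lemma \ref{lem:ht_rhoz} gives $H(\rho) \le 1056\, D(\tau_M)^9 \ll N^9$. Multiplying $\rho$ by $\left(\begin{smallmatrix}m&l\\0&n\end{smallmatrix}\right)$, whose entries are at most $N$, we see that each of $\alpha,\beta,\gamma,\delta$ is an integer of absolute value $\ll N^{10}$. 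Using \eqref{eq:ht_sum_lower_bound} we obtain
\[
\max\bigl\{H(\alpha-\xi\gamma),\,H(\beta-\xi\delta)\bigr\} \;\le\; c\, N^{10} H(\xi),
\]
and both coefficients lie in the degree-$2$ field $K(\xi)$ (of degree at most $2[K:\Q]$).

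\emph{Step 3 (apply the transcendence estimate).} Now I would invoke an explicit David-type lower bound for a linear form in two elliptic logarithms (here the two periods $\omega_1,\omega_2$ of $E_0$) with coefficients in a number field of bounded degree. The relevant shape is
\[
\log|\Lambda| \;\ge\; -C(E_0,\xi,[K:\Q])\cdot(\log B)^{4},
\]
where $B$ dominates $\max\{H(\alpha-\xi\gamma),H(\beta-\xi\delta),e\}$ and the constant $C$ depends on $E_0$ through $h(1,g_2,g_3)$, $h(j_0)$, the size of the periods, and the degree $[K:\Q]$. By Step 2, $\log B \ll \log N + \log H(\xi) + 1$, and the hypothesis $N \ge \mathcal{N}(E_0,\xi)$ forces the terms $\log H(\xi)$, $\log[K:\Q]$ and $h$ to be absorbed into the leading $\log N$ (and into $c_1'$). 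This produces the desired inequality $\log|\Lambda| \ge -c_1'(\log N)^4$.

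The main obstacle is to invoke the elliptic-logarithm lower bound in a form that is genuinely explicit and to track the dependence on $E_0$, $\xi$, $[K:\Q]$, and on the periods $\omega_1,\omega_2$: one needs the constant to depend only on $E_0$ (and mildly on $\xi$) and not to degrade the exponent $4$ in $(\log N)^4$. This is precisely what the hypothesis $N \ge \mathcal{N}(E_0,\xi)$ is designed to enforce, ensuring that $\log N$ is the dominant parameter so that all secondary contributions can be packaged into the single constant $c_1'$.
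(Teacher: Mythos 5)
Your proof is correct and follows essentially the same route as the paper: interpret $(\alpha-\xi\gamma)\omega_2 + (\beta-\xi\delta)\omega_1$ as a nonvanishing linear form in the two periods with algebraic coefficients whose heights are polynomial in $N$ (via Lemma~\ref{lem:ht_rhoz} and \eqref{eq:fd_dist}), then apply David's Th\'eor\`eme~2.1 in \cite{davidlinearforms}, with the hypothesis $N \ge \mathcal{N}(E_0,\xi)$ chosen precisely so that $\log N$ absorbs $h$, $\log[K:\Q]$ and $\log H(\xi)$ into the constant $c_1'$. The one place you diverge is the nonvanishing step: the paper notes that $\Lambda = 0$ would give $\tau_0 = \left(\begin{smallmatrix}\delta&-\beta\\-\gamma&\alpha\end{smallmatrix}\right).\xi$, hence a degree-$N$ isogeny between the non-CM curve $E_0$ and the CM curve with $j$-invariant $j(\xi)$, which cannot exist since isogenies preserve the endomorphism algebra; you instead invoke Schneider's theorem to conclude $\tau_0$ is transcendental and then observe that a rational M\"obius transformation with nonzero determinant preserves transcendence. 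Both arguments are valid; yours imports a deep transcendence result where the paper gets by with the elementary isogeny fact, and note that the matrix $\rho\left(\begin{smallmatrix}m&l\\0&n\end{smallmatrix}\right)$ has determinant $N$, so it lies in $\Mat_2(\Z)$ with positive determinant rather than $\GL_2(\Z)$ as you wrote (harmless). A small exponent slip in Step~2: since each of $\alpha,\gamma$ has size $\ll N^{10}$, the product bound gives $H(\alpha-\xi\gamma)\ll N^{20}H(\xi)$, not $N^{10}H(\xi)$; the paper correspondingly sets $B=N^{21}$, and this does not change the shape of the final estimate.
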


\begin{proof}
	This is a special case of Th\'eor\`eme 2.1 in \cite{davidlinearforms}.
	We set $D = [K:\Q]$.
	Also see \cite{davidhiratakohno} for a similar result with a computable constant.
	We put $\mathcal{L}(z_0,z_1,z_2) = (\alpha-\xi\gamma) z_1 + (\beta-\xi\delta)z_2$.
	Our elliptic curve and the coefficients are in a number field of degree at most $2D$
	since $\xi$ is quadratic.
	Note that $(\alpha-\xi\gamma) \omega_2 + (\beta-\xi\delta)\omega_1 \not= 0$ otherwise we would have
	\begin{displaymath}
		\tau_0 = \frac{\xi\delta-\beta}{\alpha-\xi\gamma} = \begin{pmatrix}\delta&-\beta\\-\gamma&\alpha\end{pmatrix}.\xi
	\end{displaymath}
	i.e.~there is a isogeny of degree $N$ between elliptic curves with $j$--invariant $j(\tau_0)$
	and $j(\xi)$.
	But one has complex multiplication and the other does not, so this is impossible.
	We choose the variables $u_1, u_2$ in the theorem to be $\omega_2$ and $\omega_1$, respectively.
	Then $\gamma_1 = \gamma_2 = (0,0,1)$ and $v = (1,\omega_2,\omega_1)$.
	We have to estimate the height of the coefficients of the linear form.
	For this, let $H$ denote the multiplicative height.
	Let $\rho = \begin{smatrix}a&b\\c&d\end{smatrix}$. Then $\alpha = ma$ and $\gamma = cm$,
	and by \ref{eq:ht_sum_lower_bound} we obtain
	\begin{align*}
		H(\alpha-\xi \gamma) &\le 2H(\alpha)H(-\xi\gamma)	\\
			&\le 2H(\alpha)H(\xi)H(\gamma)	\\
			&= 2H(am)H(\xi)H(cm)	\\
			&\le 2H(a)H(m)H(\xi)H(c)H(m)	\\
			&= 2H(a)H(\xi)H(c)m^2.
	\end{align*}
	Now $m \le N$ and $H(a),H(c) \le H(\rho)$ so that
	\begin{displaymath}
		H(\alpha-\xi \gamma) \le 2H(\rho)^2H(\xi)N^2.
	\end{displaymath}
	Note that $\begin{smatrix}m&l\\0&n\end{smatrix}.\tau_0$ does not have CM and
	is thus not an elliptic point for $\sl2z$. This means that if $\rho' \in \sl2z$
	transfers the point to the same points as $\rho$ does, then $\rho' = \pm \rho$.
	Since $H(\rho) = H(-\rho)$
	we can use Lemma \ref{lem:ht_rhoz} together with \eqref{eq:fd_dist} to obtain
	\begin{equation}\label{eq:ht_rho}
		H(\rho) \le 264 D\left(\frac{m\tau_0+l}n\right)^9 \le 264 (D(\tau_0)+1)^9 N^9
		\le 2\cdot 10^5 N^9,
	\end{equation}
	because $\tau_0 \in \F$.
	Altogether we have
	\begin{displaymath}
		H(\alpha-\xi\gamma) \le 8\cdot 10^{10} H(\xi) N^{20}.
	\end{displaymath}
	We have $\beta = al+bn$ and $\delta = cl+dn$. Recall $0 \le l < n \le N$ and thus
	\begin{displaymath}
		H(\beta) = |al+bn| \le |al|+|bn| \le (|a|+|b|)N \le 2H(\rho) N
	\end{displaymath}
	and
	\begin{displaymath}
		H(\delta) = |cl+dn| \le |cl|+|dn| \le (|c|+|d|)N \le 2H(\rho) N.
	\end{displaymath}
	For the height of $H(\beta-\xi\delta)$ we obtain
	\begin{align*}
		H(\beta-\xi \delta)
			&\le 2H(\beta)H(\xi)H(\delta)	\\
			&\le 8H(\rho)^2H(\xi)N^2.
	\end{align*}
	Using \eqref{eq:ht_rho} again this yields
	\begin{displaymath}
		H(\beta-\xi \delta) \le 4\cdot 10^{11} H(\xi) N^{20}.
	\end{displaymath}
	Put $V_1 = V_2 = e^{18\pi h}$.
	We have
	\begin{align*}
		\frac{3\pi |u_1|^2}{|\omega_1|^2 \Im(\tau_0) 2D}
		\le \frac{3\pi |u_1|^2}{|\omega_1|^2 \Im(\tau_0) D}
		\le \frac{3\pi |\tau_0|^2}{\Im(\tau_0) D}
		\le \frac{6\pi}D |\tau_0|
		\le 18\pi h
	\end{align*}
	where we used the previous lemma on the last inequality
	and we also have
	\begin{align*}
		\frac{3\pi |u_2|^2}{|\omega_1|^2 \Im(\tau_0) D}
		\le \frac{3\pi}{\Im(\tau_0) D}
		\le 6\pi
	\end{align*}
	since $|\tau_0|^2/\Im(\tau_0) \le 2|\tau_0|/\sqrt{3}$ and $\Im(\tau_0) \ge \sqrt{3}/2$ for $\tau_0 \in \bar\F$.
	Therefore, equation (3) of Th\'eor\`eme 2.1 in \cite{davidlinearforms} is satisfied independently
	of whether $\xi$ is in $K$ or not.
	Assume
	\begin{displaymath}
		N \ge \left({\max\{e^{18 \pi h}, D, {(4\cdot 10^{11} H(\xi)})^{20}\}} \right)^{1/20}.
	\end{displaymath}
	Define
	\begin{displaymath}
		B = N^{21}.
	\end{displaymath}
	We picked $N$ large enough so that
	\begin{displaymath}
		B \ge \max\{e^{18\pi h},H(\alpha-\xi\gamma), H(\beta-\xi\delta)\}.
	\end{displaymath}
	This implies $B \ge V_1^{1/D} = V_2^{1/D}$. Thus, equations (1) and (2) of
	the theorem in \cite{davidlinearforms} are satisfied, and
	we are in the situation of the theorem to obtain as a result the lower bound
	\begin{align*}
		\log \lvert \mathcal{L}(v) \rvert &\ge -C \cdot 2^6\cdot D^6 (\log B + \log(2D))\cdot(\log\log B+h+\log(2D))^3\log V_1 \log V_2	\\
		&\ge -C \cdot 2^6 \cdot D^6 \cdot 54\cdot(18\pi h)^2 \cdot (\log B)^4
	\end{align*}
	since $h \le \log B$ and $\log(2D) \le \log B$.
	If we substitute $B$ and take $C$ from \cite{davidlinearforms} we get
	\begin{align*}
		\log \lvert \mathcal{L}(v) \rvert
		&\ge -C \cdot 2^6 \cdot D^6 \cdot 54\cdot(18\pi h)^2 \cdot 21^4\cdot (\log N)^4	\\
		&\ge -10^{54} \cdot D^6 \cdot h^2 \cdot (\log N)^4.
	\end{align*}
	This gives the desired inequality of the lemma.
\end{proof}

We use the definitions
\begin{displaymath}
	\F_+ = \{ \tau \in \F; 0 \le \Re(\tau) \le 1/2 \}
\end{displaymath}
and
\begin{displaymath}
	\F_- = \{ \tau \in \F; -1/2 \le \Re(\tau) \le 0 \}.
\end{displaymath}
The following lemma can be found in \cite{bilulucapizarro}.

\begin{lem}\label{lem:jbound}
	For $\tau \in \F_+$ we either have $|\tau-\zeta| \ge 10^{-3}$ and $|j(\tau)| \ge 4.4\cdot 10^{-5}$
	or $|\tau-\zeta| \le 10^{-3}$ and
	\begin{displaymath}
		 44000|\tau-\zeta|^3 \le |j(\tau)| \le 47000|\tau-\zeta|^3.
	\end{displaymath}
	For $\tau \in \F_-$ we either have $|\tau-\zeta^2| \ge 10^{-3}$ and $|j(\tau)| \ge 4.4\cdot 10^{-5}$
	or $|\tau-\zeta^2| \le 10^{-3}$ and
	\begin{displaymath}
		 44000|\tau-\zeta|^3 \le |j(\tau)| \le 47000|\tau-\zeta|^3.
	\end{displaymath}
\end{lem}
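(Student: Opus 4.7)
The plan is to analyze $j$ locally near its unique zero $\zeta$ in $\F_+$ via a Taylor expansion, and to bound $|j|$ from below on the rest of $\F_+$ by combining a compactness argument with the growth of $j$ at the cusp. The $\F_-$ statement will then follow by the reflection symmetry $j(-\bar\tau) = \overline{j(\tau)}$, which sends $\F_+$ to $\F_-$ and $\zeta$ to $\zeta^2$, so I would treat only $\F_+$ in detail.

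First I would establish the local behaviour at $\zeta$. Because $\zeta$ is an elliptic point of order $3$ for $\sl2z$ and corresponds to a smooth point of the modular curve $X(1)$ at which $j$ has a simple zero, the function $j\colon \H \to \C$ has a zero of order exactly $3$ at $\zeta$. Writing
\[
	j(\tau) = c_3 (\tau-\zeta)^3 + c_4 (\tau-\zeta)^4 + c_5 (\tau-\zeta)^5 + \cdots,
\]
I would compute the leading coefficient $c_3$ explicitly from $j = 1728\, g_2^3/\Delta$, the simple zero of $g_2$ at $\zeta$ (arising from $E_4(\zeta)=0$), and the classical values of $E_6(\zeta)$ and $\Delta(\zeta)$ (the latter through the Chowla--Selberg formula). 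The number $|c_3|$ comes out close to $4.5 \cdot 10^4$.

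Next I would obtain the two-sided bound on the disk $|\tau-\zeta|\le 10^{-3}$. Cauchy's estimates on a somewhat larger closed disk $|\tau-\zeta|\le r$ contained in $\H$, together with an a priori $L^\infty$ bound $M$ for $|j|$ on that disk, yield $|c_k|\le M r^{-k}$ for $k\ge 4$ and thus
\[
	\left| j(\tau) - c_3(\tau-\zeta)^3 \right| \le \frac{M(|\tau-\zeta|/r)^4}{1-|\tau-\zeta|/r}.
\]
For $|\tau-\zeta|\le 10^{-3}$ this Taylor tail is tiny compared to $|c_3||\tau-\zeta|^3$; choosing reasonable $r$ and $M$ and plugging in the explicit value of $c_3$ should give the stated bounds $44000|\tau-\zeta|^3 \le |j(\tau)| \le 47000|\tau-\zeta|^3$.

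Finally I would treat $\tau\in\F_+$ with $|\tau-\zeta|\ge 10^{-3}$. Splitting off a cuspidal strip $\{\Im(\tau)\ge Y_0\}$ for some $Y_0$ of moderate size (say $Y_0 = 2$), the $q$-expansion $j(\tau) = q^{-1} + 744 + 196884\,q + \cdots$ with $q = e^{2\pi i \tau}$ yields $|j(\tau)|\ge e^{2\pi Y_0} - 745 - \cdots$, comfortably above $4.4\cdot 10^{-5}$. On the complementary compact region $\{\tau\in\F_+ : |\tau-\zeta|\ge 10^{-3},\ \Im(\tau)\le Y_0\}$, the continuous non-vanishing function $|j|$ attains a positive minimum, which a rigorous numerical or interval-arithmetic check would confirm is at least $4.4\cdot 10^{-5}$. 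The main obstacle is pinning down the numerical constants: the clean figures $44000$, $47000$, and $4.4\cdot 10^{-5}$ demand a precise evaluation of $c_3$, a quantitative Cauchy-tail bound on the small disk, and a rigorous minimum on the compact region — each step routine in principle but requiring careful bookkeeping to leave sufficient slack in all three estimates.
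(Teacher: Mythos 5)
The paper does not prove this lemma at all: the line immediately preceding the statement reads ``The following lemma can be found in \cite{bilulucapizarro}'', and no proof environment follows. So there is no in-paper argument to compare against, and the question is simply whether your sketch could be completed into a self-contained proof.

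Your outline is sound, and it is essentially the standard way such explicit local bounds are obtained. The function $j$ has a triple zero at the elliptic point $\zeta$, so $j(\tau) = c_3(\tau-\zeta)^3 + O((\tau-\zeta)^4)$ near $\zeta$, with $c_3 = -1728\,E_4'(\zeta)^3/E_6(\zeta)^2$; the modulus $|c_3|$ is computable from the CM values of $\eta(\zeta)$ (equivalently the Chowla--Selberg formula) and is indeed of size roughly $4.5\cdot 10^4$, consistent with the bracket $44000 \le \cdot \le 47000$ once the Cauchy tail on the small disk is subtracted. The cuspidal strip is trivial since $|q|^{-1}$ dominates for $\Im\tau \ge 2$, and the $\F_-$ statement follows by the reflection $j(-\bar\tau) = \overline{j(\tau)}$, which exchanges $\F_+$ with $\F_-$ and $\zeta$ with $\zeta^2$. (Incidentally this reflection also makes clear that the second display in the paper's statement should read $|\tau-\zeta^2|$, not $|\tau-\zeta|$; that is a typo in the paper.)

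The one place where your proposal is not yet a proof is the one you flag yourself: the lower bound $4.4\cdot 10^{-5}$ on the truncated compact region $\{\tau \in \F_+ : |\tau-\zeta| \ge 10^{-3},\ \Im\tau \le Y_0\}$ cannot be extracted from a compactness argument alone; it requires a rigorous numerical verification (a finite explicit cover together with a Lipschitz or modulus-of-continuity bound for $j$, or interval arithmetic). Note that $4.4\cdot 10^{-5} = 44000\cdot(10^{-3})^3$ is exactly the value of the Taylor lower bound on the circle $|\tau-\zeta| = 10^{-3}$, so the global constant is chosen to meet the local one, and the numerical check must confirm that $|j|$ does not dip below the value it takes on that circle anywhere else in the compact region. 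That step is routine but nontrivial, and without it the numerical constants in the lemma remain unverified; this is presumably why the paper delegates the whole lemma to the cited reference rather than proving it in place.
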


We fix $E_0$ given by a Weierstrass equation $y^2 = 4x^3 - g_2x - g_3$, and assume
it is defined over a number field $K$. Let $j_0$ be its $j$--invariant and pick
$\tau_0 \in \F$ with $j(\tau_0) = j_0$.
Let $E$ be an elliptic curve with $j$--invariant $j$ that is $N$--isogenous to $E_0$.
As before, we set $j(\tau_0^\sigma) = \sigma(j(\tau_0))$ with $\tau^\sigma_0 \in \F$
for any field embedding $\sigma\colon K \rightarrow \C$. By $E_0^\sigma$ and $E^\sigma$
we denote the Galois conjugates of $E_0$ and $E$, respectively.

\begin{lem}\label{lem:conjbound}
	Let $N \ge \mathcal{N}(E_0^\sigma, \zeta)$.
	We have $\log \lvert \sigma(j) \rvert \ge -c_1 \cdot (\log N)^6 - c_2$ for
	any $\Q$--homomorphism $\sigma\colon K \rightarrow \C$,
	where the constants are explicit and only depend on the fixed elliptic curve $E_0$.
	We have $c_1 \ge 1$ and we can have $c_2 \ge 0$.
\end{lem}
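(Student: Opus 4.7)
The plan is to write $\sigma(j) = j(\tilde\tau)$ for some $\tilde\tau \in \bar\F$ obtained from an $N$--isogenous lift of $\tau_0^\sigma$, then combine Lemma \ref{lem:jbound} with the linear form in elliptic logarithms bound of Lemma \ref{lem:linform} applied at the quadratic algebraic number $\xi = \zeta$ or $\xi = \zeta^2$.

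Since $E^\sigma$ is $N$--isogenous to $E_0^\sigma$ over $\bar\Q$, I can find integers $m,n,l$ with $mn = N$ and $0 \le l < n$, together with some $\rho \in \sl2z$, such that $\tilde\tau := \rho\begin{smatrix}m & l \\ 0 & n\end{smatrix}\tau_0^\sigma \in \bar\F$ and $j(\tilde\tau) = \sigma(j)$. Write $\begin{smatrix}\alpha & \beta \\ \gamma & \delta\end{smatrix} = \rho\begin{smatrix}m & l \\ 0 & n\end{smatrix}$. Lemma \ref{lem:jbound} now splits into two cases. If $\tilde\tau$ is bounded away from both $\zeta$ and $\zeta^2$, then $|\sigma(j)| \ge 4.4 \cdot 10^{-5}$ and the claim holds with $c_1 = 1$ and $c_2$ absorbing this constant. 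Otherwise, for some $\xi \in \{\zeta, \zeta^2\}$ we have $|\sigma(j)| \ge 44000\, |\tilde\tau - \xi|^3$, and the task reduces to a lower bound for $|\tilde\tau - \xi|$.

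Choosing periods $\omega_1^\sigma, \omega_2^\sigma$ of $E_0^\sigma$ with $\tau_0^\sigma = \omega_2^\sigma/\omega_1^\sigma$, a direct manipulation yields
\begin{displaymath}
    \tilde\tau - \xi = \frac{(\alpha - \xi\gamma)\omega_2^\sigma + (\beta - \xi\delta)\omega_1^\sigma}{(\gamma\tau_0^\sigma + \delta)\,\omega_1^\sigma}.
\end{displaymath}
Lemma \ref{lem:linform} applied to $E_0^\sigma$ bounds the numerator from below by $\exp(-c_1'(\log N)^4)$ once $N \ge \mathcal{N}(E_0^\sigma, \xi)$; the height data $h(j_0)$ and $h(1, g_2, g_3)$ entering this constant are Galois invariant, so $c_1'$ can be chosen uniformly in $\sigma$. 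For the denominator I plan to bound $|\gamma|$ and $|\delta|$ polynomially in $N$ via the estimate $H(\rho) \le 2\cdot 10^5 N^9$ already used in the proof of Lemma \ref{lem:linform}, to bound $|\tau_0^\sigma|$ by Lemma \ref{lem:tau_less_htj}, and to absorb $|\omega_1^\sigma|$ into a constant depending only on $E_0$ — this last bound is uniform in $\sigma$ because $K$ has only finitely many embeddings into $\C$. These estimates combine to give $\log |\tilde\tau - \xi| \ge -c_1'(\log N)^4 - O(\log N) - C(E_0)$.

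Taking cubes and logarithms, and using that the hypothesis $N \ge \mathcal{N}(E_0^\sigma, \zeta)$ makes $\log N$ large, we obtain
\begin{displaymath}
    \log|\sigma(j)| \ge 3\log|\tilde\tau - \xi| + \log(44000) \ge -c_1(\log N)^6 - c_2,
\end{displaymath}
with $c_1 \ge 1$ and $c_2 \ge 0$ depending only on $E_0$; note there is a considerable gap between the $(\log N)^4$ one actually obtains and the weaker $(\log N)^6$ in the statement. I expect the principal obstacle to be the uniformity bookkeeping rather than any new analytic input: one must verify that the constant $c_1'$ from Lemma \ref{lem:linform}, the upper bound on $|\omega_1^\sigma|$, and the threshold $\mathcal{N}$ can all be chosen independent of $\sigma$. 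This follows from the Galois invariance of the relevant heights together with the finiteness of the Galois orbit of $E_0$.
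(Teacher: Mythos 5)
Your argument is essentially the paper's own proof: the same case split via Lemma \ref{lem:jbound}, the same algebraic identity expressing $\tilde\tau - \xi$ as a ratio whose numerator is the linear form $\mathcal L(v) = (\alpha - \xi\gamma)\omega_2^\sigma + (\beta - \xi\delta)\omega_1^\sigma$ and whose denominator is $\gamma\omega_2^\sigma + \delta\omega_1^\sigma$, the same application of Lemma \ref{lem:linform}, and the same use of $H(\rho) \le 2\cdot 10^5 N^9$ to control the denominator. The remark that $c_1'$, the period bound, and the threshold $\mathcal N(E_0^\sigma,\zeta)$ can be chosen uniformly in $\sigma$ by finiteness of the Galois orbit is correct and matches what the paper implicitly does when it absorbs $\max\{1,|\omega_{0,1}^\sigma|,|\omega_{0,2}^\sigma|\}$ into $c_2$.
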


\begin{proof}
	We assume $|\sigma(j)| \le 10^{-3}$ for now.
	We have an $N$--isogeny between $E_0^\sigma$ and $E^\sigma$ since $E_0$ and $E$ are $N$--isogenous.
	Let $E_0^\sigma(\C) \simeq \C/(\omega_{0,1}^\sigma \Z + \omega_{0,2}^\sigma \Z)$
	with $\tau_0^\sigma = \omega_{0,2}^\sigma/\omega_{0,1}^\sigma$
	in the fundamental domain.
	Similarly, let $\tau^\sigma$ correspond to $E^\sigma(\C)$.
	We can choose $ \omega_{1}^\sigma$ and $\omega_{2}^\sigma$ such
	that $\tau^\sigma = \rho \begin{smatrix}m&l\\0&n\end{smatrix}\tau_0^\sigma$ and
	such that $\tau^\sigma$ is in the fundamental domain $\F$.
	Write $\begin{smatrix}\alpha&\beta\\\gamma&\delta\end{smatrix} = \rho\begin{smatrix}m&l\\0&n\end{smatrix}$.
	A similar estimate to the one in the proof of Lemma \ref{lem:linform} shows
	\begin{align}\label{eq:scaledfp}
		\begin{split}
			\lvert \gamma\omega_{0,2}^\sigma + \delta\omega_{0,1}^\sigma \rvert
			&\le 3N \max\{|\omega_{0,1}^\sigma|,|\omega_{0,2}^\sigma|\} H(\rho)	\\
			&\le 792 (D(\tau_0^\sigma)+1)^9 \max\{|\omega_{0,1}^\sigma|,|\omega_{0,2}^\sigma|\} N^{10}	\\
			&\le 10^6 \max\{|\omega_{0,1}^\sigma|,|\omega_{0,2}^\sigma|\} N^{10},
		\end{split}
	\end{align}
	since $D(\tau_0^\sigma) \le 2/\sqrt{3}$.
	Note that we have $\tau^\sigma \not= \zeta$ since $E$ does not have CM. 
	We have
	\begin{equation}\label{eq:linlog_trans}
		\begin{split}
			\log \lvert \tau^\sigma - \zeta \rvert
				&= \log \left\lvert \begin{smatrix}\alpha&\beta\\\gamma&\delta\end{smatrix}.\tau_0^\sigma - \zeta \right\rvert	\\
			&= \log \left\lvert \frac{\alpha\tau_0^\sigma + \beta}{\gamma\tau_0^\sigma + \delta} - \zeta \right\rvert	\\
			&= \log \left\lvert \frac{\alpha\omega_{0,2}^\sigma + \beta\omega_{0,1}^\sigma}{\gamma\omega_{0,2}^\sigma + \delta\omega_{0,1}^\sigma}
					- \zeta \right\rvert	\\
			&= \log\left( \frac 1{\lvert\gamma\omega_{0,2}^\sigma + \delta\omega_{0,1}^\sigma\rvert}
				\lvert \alpha\omega_{0,2}^\sigma + \beta\omega_{0,1}^\sigma
					- \zeta(\gamma\omega_{0,2}^\sigma + \delta\omega_{0,1}^\sigma) \rvert \right)	\\
			&= -\log \lvert\gamma\omega_{0,2}^\sigma + \delta\omega_{0,1}^\sigma\rvert
				+ \log\lvert (\alpha-\zeta\gamma)\omega_{0,2}^\sigma + (\beta-\zeta\delta)\omega_{0,1}^\sigma \rvert.
		\end{split}
	\end{equation}
	We can use \eqref{eq:scaledfp} in the first step and Lemma \ref{lem:linform} the second to get
	\begin{align*}
		\log \lvert \tau^\sigma - \zeta \rvert
		&\ge -\log\left(10^6 \max\{|\omega_{0,1}^\sigma|,|\omega_{0,2}^\sigma|\} N^{10}\right)
			+ \log\lvert (\alpha-\zeta\gamma)\omega_{0,2}^\sigma + (\beta-\zeta\delta)\omega_{0,1}^\sigma \rvert	\\
		&\ge -\log\left(10^6 \max\{|\omega_{0,1}^\sigma|,|\omega_{0,2}^\sigma|\} N^{10}\right)
			- c_1' \cdot (\log N)^6,
	\end{align*}
	where $c_1'$ is the constant from Lemma \ref{lem:linform}.
	The same bound holds for $\zeta$ replaced by $\zeta^2$ since $\mathcal{N}(E,\zeta) = \mathcal{N}(E,\zeta^2)$.
	Assuming that $\tau^\sigma$ is closer to $\zeta$, Lemma \ref{lem:jbound} says
	\begin{displaymath}
		\lvert \sigma(j) \rvert = \lvert j(\tau^\sigma) \rvert
		\ge 44000|\tau^\sigma-\zeta|^3.
	\end{displaymath}
	This implies
	\begin{align*}
		\log\lvert \sigma(j) \rvert =& \log\lvert j(\tau^\sigma) \rvert
			\ge \log 44000 + \log|\tau^\sigma-\zeta|^3	\\
		\ge& \log 44000 -3\log\left( 10^6\max\{|\omega_{0,1}^\sigma|,|\omega_{0,2}^\sigma|\} \right)	\\
			&-10 \log N - 3c_1' \cdot (\log N)^6	\\
		\ge& -14 -3\log\left( \max\{|\omega_{0,1}^\sigma|,|\omega_{0,2}^\sigma|\} \right)	\\
			&- 4\cdot 10^{54} \cdot D^6 \cdot h^2\cdot (\log N)^6.
	\end{align*}
	So we can put $c_1 = 2\cdot 10^{51} \cdot D^6 \cdot h^2 \ge 1$. Since
	$|\omega_{0,1}^\sigma|$ and $|\omega_{0,2}^\sigma|$ can be small we put
	$c_2 = 14 +3\log\left( \max\{1,|\omega_{0,1}^\sigma|,|\omega_{0,2}^\sigma|\} \right)$.

	If $|\sigma(j)| \ge 10^{-3}$, then
	\begin{displaymath}
		\log(|\sigma(j)|) \ge \log(10^{-3}) \ge -7 > -c_2,
	\end{displaymath}
	so the bound is true.
\end{proof}

\begin{prop}\label{prop:j_isogeny_bound}
	Let $j_0$ and $j$ be $j$--invariants of elliptic curves, where $j_0$
	is associated to the elliptic curve $E_0/K$
	given by $E_0\colon y^2 = 4x^3 - g_2x -g_3$.
	Put $h = \max\{1, h(1,g_2,g_3), h(j_0)\}$ and $j(\tau_0) = j_0$ with $\tau_0 \in \F$.
	Assume we have a cyclic isogeny of degree $N$ between $E_0$ and an
	elliptic curve corresponding to $j$.
	Further assume that $j$ is an algebraic unit.
	If
	\begin{displaymath}
		N \ge \max\left\{ 4\cdot 10^{11}, [K:\Q], e^{18\pi h} \right\},
	\end{displaymath}
	then the height of $j$ can be estimated by
	\begin{align*}
		h(j)
		\le& {6\cdot 10^7 h [K:\Q] [\GL_2(\Z/N\Z):\rho_N(G_{K})]}\left(N^{-1/10}
					+ \sqrt\varepsilon\right) \left(c_1 (\log N)^6 + c_2\right)	\\
					&+ 3\lvert \log \varepsilon \rvert
	\end{align*}
	where $0 < \varepsilon < 10^{-5}$ is arbitrary.
\end{prop}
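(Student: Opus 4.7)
The plan is to exploit that $j$ is an algebraic unit via
\begin{displaymath}
	h(j) = -\frac{1}{[K(j):\Q]} \sum_{|\sigma(j)|<1} \log|\sigma(j)|,
\end{displaymath}
where the sum runs over embeddings $\sigma\colon K(j) \hookrightarrow \C$, and to bound the contributions by partitioning the embeddings according to the size of $|\sigma(j)|$. I would split off a \emph{moderate range} $44000\varepsilon^3 \le |\sigma(j)| < 1$, which contributes at most $-\log(44000\varepsilon^3) = 3|\log\varepsilon|+O(1)$ per embedding, and thus at most $3|\log\varepsilon|+O(1)$ to $h(j)$ after normalisation; and a \emph{small range} $|\sigma(j)| < 44000\varepsilon^3$ which requires more care.

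For $\sigma$ in the small range, Lemma \ref{lem:jbound} forces the unique $\tau^\sigma \in \F$ with $j(\tau^\sigma) = \sigma(j)$ to lie within distance $\varepsilon$ of $\zeta$ or of $\zeta^2$. Writing $\tilde\sigma$ for the restriction of $\sigma$ to $K$, the point $\tau^\sigma$ is $\SL_2(\Z)$-equivalent to $M\tau_0^{\tilde\sigma}$ for some cyclic $N$-isogeny matrix $M$. Hence, for each of the $[K:\Q]$ embeddings $\tilde\sigma$ of $K$, I would apply Proposition \ref{prop:clusterbound} with $\xi \in \{\zeta,\zeta^2\}$ to bound the number of small extensions $\sigma$ of $\tilde\sigma$ by a quantity at most
\begin{displaymath}
	2\cdot 10^7 |\tau_0^{\tilde\sigma}| \bigl(\sqrt{N}\sigma_0(N) + \sqrt{\varepsilon}\,\psi(N)\bigr).
\end{displaymath}
Summing over $\tilde\sigma$ and invoking Lemma \ref{lem:tau_less_htj} yields $\sum_{\tilde\sigma}|\tau_0^{\tilde\sigma}| \le 3[K:\Q]^2 h$, while each small $\sigma$ contributes at most $c_1(\log N)^6 + c_2$ to $-\log|\sigma(j)|$ by Lemma \ref{lem:conjbound}.

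Combining both ingredients and dividing by $[K(j):\Q] = [K:\Q]\cdot [K(j):K]$, I would use Lemma \ref{lem:serre_index_bound} in the form $[K(j):K] \ge \psi(N)/[\GL_2(\Z/N\Z):\rho_N(G_K)]$ to cancel a factor of $[K:\Q]$ and a factor of $\psi(N)$. The remaining quantity has the shape
\begin{displaymath}
	6\cdot 10^7\, h\, [K:\Q]\, [\GL_2(\Z/N\Z):\rho_N(G_K)] \left(\frac{\sqrt{N}\sigma_0(N)}{\psi(N)} + \sqrt{\varepsilon}\right)\bigl(c_1(\log N)^6 + c_2\bigr).
\end{displaymath}
An explicit divisor bound of the type $\sigma_0(N) \le N^{2/5}$, valid for the large $N$ permitted by the hypotheses, together with $\psi(N)\ge N$, gives $\sqrt{N}\sigma_0(N)/\psi(N) \le N^{-1/10}$; adding back the moderate-range contribution of $3|\log\varepsilon|$ yields the stated inequality.

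The main technical obstacle is the combinatorial bookkeeping: Proposition \ref{prop:clusterbound} counts points in $\bar\F$ rather than Galois conjugates, and one must verify that this does indeed bound the number of small embeddings $\sigma\colon K(j)\hookrightarrow\C$ above $\tilde\sigma$. This relies on the fact that $j\colon \F \to \C$ is injective, so that distinct extensions give distinct $\tau^\sigma$, together with a clean matching of the constants so that the Serre index $[\GL_2(\Z/N\Z):\rho_N(G_K)]$ (and no extra spurious factors) appears in the final bound.
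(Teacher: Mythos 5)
Your proposal follows essentially the same route as the paper's proof: split the unit-height sum at a threshold proportional to $\varepsilon^3$, invoke Lemma \ref{lem:jbound} to translate the small-valued conjugates into points near $\zeta$ or $\zeta^2$, bound their count via Proposition \ref{prop:clusterbound} together with Lemma \ref{lem:tau_less_htj}, bound each such term from below via Lemma \ref{lem:conjbound}, and finally absorb the degree via Lemma \ref{lem:serre_index_bound} and Nicolas' divisor bound $\sigma_0(N)\le N^{2/5}$. All the ingredients and constants match.

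One step you should tighten: you normalise by $[K(j):\Q]$ and invoke Lemma \ref{lem:serre_index_bound} ``in the form $[K(j):K]\ge\psi(N)/[\GL_2(\Z/N\Z):\rho_N(G_K)]$'', but that lemma is stated for $B=\lvert\{\sigma(\Phi):\sigma\in\Gal(\bar K/K)\}\rvert$, i.e.\ the Galois orbit of the kernel, not of $j$. Since in general two distinct cyclic subgroups of $E_0[N]$ could yield isomorphic quotients, a priori one only has $[K(j):K]\le B$, which is the wrong direction. What saves you is the non-CM hypothesis: $\Hom(E_0,E)$ is a free $\Z$-module of rank $1$, so a cyclic isogeny $E_0\to E$ is unique up to sign and its kernel is uniquely determined by $j$; hence the orbit map $\sigma(\Phi)\mapsto\sigma(j)$ is injective and $[K(j):K]=B$. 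The paper avoids this by establishing $j\in K^\Phi$ (Silverman III.4.13.2), setting $D=[K^\Phi:\Q]=B[K:\Q]$ and working with $D$ throughout; either way a short justification is needed, and you should state it.
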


\begin{proof}
	Let $E$ be an elliptic curve corresponding to $j$, so
	that there is a cyclic isogeny of degree $N$ between $E_0$ and $E$.
	Let $\Phi \subset E_0[N]$ be the kernel of the given isogeny $E_0 \rightarrow E$.
	Put $G = \{ \sigma \in \Gal(K(E_0[N])/K); \sigma(\Phi) = \Phi \}$, and let
	$K^\Phi = \{ \alpha \in K(E_0[N]); \sigma(\alpha) = \alpha \text{ for all } \sigma \in G\}$ be the
	fixed field of $\Phi$. By basic Galois theory $\Gal(K(E_0[N])/K^\Phi)$ is
	equal to $G$, and hence
	$\sigma(\Phi) = \Phi$ for all $\sigma \in \Gal(\overline{K^\Phi}/K^\Phi)$.
	This implies
	\begin{align*}
		B &= \left|\left\{ \sigma(\Phi) : \sigma \in \Gal(K(E_0[N])/K)\right\}\right|	\\
		&= \frac{|\Gal(K(E_0[N])/K)|}{|G|}	\\
		&= |\Gal(K^\Phi/K)| = [K^\Phi:K].
	\end{align*}

	By Remark III.4.13.2 in \cite{silverman86AEC} the elliptic curve $E$ is defined over $K^\Phi$,
	and hence $j \in K^\Phi$.
	Let $D$ be the degree of $K^\Phi$ over $\Q$.
	Put $\varepsilon_0 = 44000\varepsilon^3$.
	By \eqref{eq:alg_int_height} we have
	\begin{align}\label{eq:ht_eps0}
		\notag
		h(j) &= -\frac 1D \left( \sum_{\lvert \sigma(j) \rvert < \varepsilon_0} \log \lvert \sigma(j) \rvert
					+ \sum_{\varepsilon_0 \le \lvert \sigma(j) \rvert < 1} \log \lvert \sigma(j) \rvert \right)	\\
		&\le -\frac 1D  \sum_{\lvert \sigma(j) \rvert < \varepsilon_0} \log \lvert \sigma(j) \rvert
					+ \lvert \log \varepsilon_0 \rvert.
	\end{align}
	Recall the definitions
	\begin{displaymath}
		\Gamma_\varepsilon = \left\{ \sigma\colon K \rightarrow \C; \tau^\sigma \in \Sigma_\varepsilon \right\}
	\end{displaymath}
	with
	\begin{displaymath}
		\Sigma_\varepsilon = \left\{ \tau \in \F; \lvert j(\tau) \rvert < \varepsilon \right\}.
	\end{displaymath}
	If $|\sigma(j)| = |j(\tau^\sigma)| < \varepsilon_0 \le 10^{-3}$ and $\tau^\sigma \in \F_+$,
	then by Lemma \ref{lem:jbound}
	\begin{equation}\label{eq:close_pts}
		|\tau^\sigma-\zeta|^3 \le \frac{|j(\tau^\sigma)|}{44000} < \frac{\varepsilon_0}{44000} = \varepsilon^3,
	\end{equation}
	i.e.~$|\tau^\sigma-\zeta| < \varepsilon$. If $\tau^\sigma$ is not in $\F_+$ but in $\F_-$, then
	$|\tau^\sigma-\zeta^2| < \varepsilon$ also follows from $|\sigma(j)| < \varepsilon_0$ and Lemma \ref{lem:jbound}.
	We have $\sigma \in \Gamma_{\varepsilon}$ since
	\begin{displaymath}
		|\sigma(j)| = |j(\tau^\sigma)| < \varepsilon_0 =  47000\varepsilon^3 \le 47000\cdot 10^{-10}\varepsilon \le \varepsilon.
	\end{displaymath}
	Continuing the estimate of \eqref{eq:ht_eps0} this gives
	\begin{align}\label{eq:j_ht_eps}
		\notag
		h(j) &\le -\frac 1D  \sum_{\lvert \sigma(j) \rvert < \varepsilon_0} \log \lvert \sigma(j) \rvert
					+ \lvert \log \varepsilon_0 \rvert	\\
		\notag
		&\le \frac{\#\Gamma_{\varepsilon_0}}D \max_{|\sigma(j)|<\varepsilon_0}\left\{\log \left\lvert \sigma(j)^{-1} \right\rvert\right\}
					+ 3\lvert \log \varepsilon \rvert - \log 44000	\\
		&\le \frac{\#\Gamma_{\varepsilon_0}}D \max_{|\sigma(j)|<\varepsilon_0}\left\{\log \left\lvert \sigma(j)^{-1} \right\rvert\right\}
					+ 3\lvert \log \varepsilon \rvert.
	\end{align}
	Since $\varepsilon \le 10^{-5} < 3/200^2$ we can apply Proposition \ref{prop:clusterbound} to each
	pair $(E_0^\sigma, \zeta)$ and $(E_0^\sigma, \zeta^2)$ where $\sigma$ runs over all embeddings
	$\sigma\colon K \hookrightarrow \C$ as follows.
	For each $\sigma \in \Gamma_{\varepsilon_0}$ the number $\tau^\sigma$ is close to either $\zeta$ or $\zeta^2$
	as we have seen in \eqref{eq:close_pts} and gives an $N$--isogeny from $E_0^\sigma$ to $E^\sigma$.
	Thus we can bound $\#\Gamma_{\varepsilon_0}$ by
	\begin{displaymath}
		\#\Gamma_{\varepsilon_0} \le 6\cdot 10^7 h [K:\Q]^2 \left(\sqrt{N}\sigma_0(N) + \sqrt\varepsilon\psi(N)\right).
	\end{displaymath}
	after applying Lemma \ref{lem:tau_less_htj}.
	We also have $\varepsilon_0 \le 10^{-3}$ and $N \ge \mathcal{N}(E_0^\sigma, \zeta)$ by assumption
	and the previous lemma,
	so we can apply Lemma \ref{lem:conjbound} to get
	\begin{displaymath}
		\max_{|\sigma(j)|<\varepsilon_0}\left\{\log \left\lvert \sigma(j)^{-1}\right\lvert\right\}
		\le c_1(\log N)^6 + c_2.
	\end{displaymath}
	Using the last two inequalities for \eqref{eq:j_ht_eps} we obtain
	\begin{align*}
		h(j)
		\le \frac{6\cdot 10^7 h [K:\Q] \left(\sqrt{N}\sigma_0(N) + \sqrt\varepsilon\psi(N)\right)}{B} \left(c_1 (\log N)^6 + c_2\right)
					+ 3\lvert \log \varepsilon \rvert
	\end{align*}
	since we have $D = [K^\Phi:\Q] = B\cdot [K:\Q]$.
	Nicolas shows on page 229 in  \cite{nicolas1987hcn} that $\sigma_0(N) \le N^{2/5}$ for $N \ge 10^7$.
	Moreover, we have
	\begin{align*}
		\sqrt{N}\sigma_0(N) \cdot B^{-1} \le N^{9/10}\cdot B^{-1} &\le N^{9/10} \cdot \psi(N)^{-1} \cdot [\GL_2(\Z/N\Z):\rho_N(G_K)]	\\
		&\le N^{-1/10} \cdot [\GL_2(\Z/N\Z):\rho_N(G_K)].
	\end{align*}
	by Lemma \ref{lem:serre_index_bound}.
	Using this in the inequality for the height above, and Lemma \ref{lem:serre_index_bound} again, we get
	\begin{align*}
		h(j)
		\le& {6\cdot 10^7\cdot h[K:\Q] [\GL_2(\Z/N\Z):\rho_N(G_{K})]}\left(N^{-1/10}
					+ \sqrt\varepsilon\right) \left(c_1 (\log N)^6 + c_2\right)	\\
					&+ 3\lvert \log \varepsilon \rvert,
	\end{align*}
	as desired.
\end{proof}

\begin{theorem}\label{thm:noncm_finitude}
	
\end{theorem}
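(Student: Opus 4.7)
The plan is to combine the lower bound on $h(j)$ from Corollary \ref{cor:lowerbound} with the upper bound from Proposition \ref{prop:j_isogeny_bound}, and to show that the two are compatible only when the isogeny degree $N$ lies below an effective threshold $N_0 = N_0(E_0,K)$. Since for each fixed $N$ at most $\psi(N)$ $j$-invariants arise as images of cyclic $N$-isogenies out of $E_0$, an effective bound on $N$ produces only finitely many admissible $j$, which is the statement of the theorem.

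First I will invoke Lombardo's estimate \eqref{eq:lombardo} to replace the index $[\GL_2(\Z/N\Z):\rho_N(G_K)]$ by a constant $\Lambda=\Lambda(E_0,K)$ that is independent of $N$; both of our inequalities then become honest statements about $N$ alone. Next, I feed the free parameter of Proposition \ref{prop:j_isogeny_bound} with the choice $\varepsilon = N^{-1}$, which is admissible once $N > 10^{5}$. With this choice $\sqrt{\varepsilon} = N^{-1/2} \le N^{-1/10}$, so the prefactor $(N^{-1/10}+\sqrt{\varepsilon})(c_1(\log N)^6+c_2)$ tends to $0$ as $N\to\infty$, while $3|\log\varepsilon| = 3\log N$. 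Thus Proposition \ref{prop:j_isogeny_bound} gives
\begin{displaymath}
h(j) \le 3\log N + o(\log N),
\end{displaymath}
whereas Corollary \ref{cor:lowerbound} provides the matching lower bound
\begin{displaymath}
h(j) \ge 6\log N - 84\Lambda \log\log N + h(j_0) - 6\log(1+h(j_0)) - 16.212.
\end{displaymath}

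Subtracting one from the other, the crucial $\log N$ coefficients ($6$ on the left, $3$ on the right) do not cancel, and I obtain an inequality of the shape $3\log N \le 84\Lambda\log\log N + O_{E_0,K}(1)$, which forces $N$ to be bounded by an explicit constant. Setting $N_0$ to be the maximum of that constant and the threshold $\max\{4\cdot 10^{11},[K:\Q],e^{18\pi h}\}$ required by Proposition \ref{prop:j_isogeny_bound}, any cyclic isogeny from $E_0$ to a curve with unit $j$-invariant must have degree at most $N_0$, so only finitely many such $j$ can occur, and their number is bounded above by $\sum_{N\le N_0}\psi(N)$.

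The main obstacle I anticipate is purely a matter of careful bookkeeping of constants rather than any conceptual difficulty. The quantities $c_1, c_2$ of Lemma \ref{lem:conjbound} grow polynomially in $h=\max\{1,h(1,g_2,g_3),h(j_0)\}$ and in $D=[K:\Q]$, while Lombardo's $\gamma_1 = \exp(10^{21483})$ inflates $\Lambda$ to astronomical size. Nothing in this hurts the argument because all of these constants are fixed once $E_0/K$ is, and none of them multiply $\log N$; nevertheless one must track them through every step to confirm that the right-hand side of the combined inequality really grows only like $84\Lambda\log\log N + \text{const}$ and not like $c\cdot\log N$ for some $c\ge 3$. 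This is precisely where the choice $\varepsilon = N^{-1}$ (rather than, say, $\varepsilon = N^{-\alpha}$ with $\alpha\ge 2$) is forced on us: any smaller exponent would destroy the margin $6-3=3$ in the coefficients of $\log N$.
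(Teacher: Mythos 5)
Your overall strategy — play Corollary \ref{cor:lowerbound} against Proposition \ref{prop:j_isogeny_bound}, bound the index via Lombardo, and derive a bound on $N$ — is exactly what the paper does, so this is essentially the same proof. Two small points are worth flagging. First, before you may apply Proposition \ref{prop:j_isogeny_bound} you need the isogeny to be \emph{cyclic}; the paper handles this by taking $N$ to be the degree of the \emph{minimal} isogeny and invoking Lemma~6.2 of Masser--W\"ustholz, which guarantees that the minimal isogeny is cyclic. Your proposal passes over this step silently, but without it you only know that some isogeny of degree $N$ exists, which is not quite what the proposition asks for. Second, your assertion that $\varepsilon = N^{-1}$ is ``forced'' is not accurate. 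The paper takes $\varepsilon = 1/(\log N)^{12}$, which is much larger than $N^{-1}$, is still admissible for large $N$, and gives $3\lvert\log\varepsilon\rvert = 36\log\log N$. With that choice the upper bound grows like $\log\log N$ while the lower bound grows like $6\log N$, so the contradiction is immediate and does not rely on the margin $6 - 3 = 3$ at all. Your $\varepsilon = N^{-1}$ works too, but it depends on a numerical coincidence (the coefficient $6$ in the lower bound exceeding the coefficient $3$ that the $\lvert\log\varepsilon\rvert$ term produces), and it would fail if one had, say, $\varepsilon = N^{-2}$; the paper's choice is robust against such tinkering. The larger $\varepsilon$ also requires checking that $(N^{-1/10} + 1/(\log N)^6)(c_1(\log N)^6 + c_2)$ stays $O(1)$ rather than tending to $0$, but that is equally harmless since $O(1)$ still loses to $6\log N$.
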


\begin{proof}
	Let $E_0$ and $E$ be elliptic curves with $j$--invariants $j_0$ and $j$, respectively. Suppose that there is
	an isogeny of degree $N$ between them. We may assume that $N$ is minimal. By Lemma 6.2 in \cite{masserwustholz}
	the isogeny is cyclic. If $N$ is large enough, then Corollary \ref{cor:lowerbound} gives a lower bound for
	the height of $j$
	\begin{equation}\label{eq:iso_lowerbound}
		\begin{split}
			h(j) \ge h(j_0) &- 6\log(1+h(j_0)) + 6\log N \\&- 84\left[\GL_2(\Z/N\Z):\rho_N(G_K)\right]\log\log N
			- 16.212
		\end{split}
	\end{equation}
	Moreover, if $j$ is an algebraic unit and $N$ is large as in Proposition \ref{prop:j_isogeny_bound},
	that proposition yields the upper bound
	\begin{align*}
		h(j)
		\le& {6\cdot 10^7\cdot h[K:\Q] [\GL_2(\Z/N\Z):\rho_N(G_{K})]}\left(N^{-1/10}
					+ \sqrt\varepsilon\right) \left(c_1 (\log N)^6 + c_2\right)	\\
					&+ 3\lvert \log \varepsilon \rvert,
	\end{align*}
	For large enough $N$, the preconceived restrictions on $\varepsilon$ are met
	if we take $\varepsilon = 1/(\log N)^{12}$ since $N \ge 10^7$ and thus $\varepsilon < 10^{-5}$.
	Therefore, we have
	\begin{equation}\label{eq:iso_upperbound}
		\begin{split}
			h(j) \le {6\cdot 10^7\cdot h[K:\Q] [\GL_2(\Z/N\Z):\rho_N(G_K)]}&\left(N^{-1/10} + \frac 1{(\log N)^6}\right)	\\
								&\cdot\left(c_1 (\log N)^6 + c_2\right)	\\
							&+ 36\log \log N.
		\end{split}
	\end{equation}
	Recall that Serre proved that
	$[\GL_2(\Z/N\Z):\rho_N(G_K)]$ is uniformly bounded in $N$. Also Lombardo gives an explicit bound in \cite{lombardo}.
	As we have seen in Corollary \ref{cor:lowerbound} the lower bound for $h(j)$ grows as $\log N$
	and the upper bound as $\log \log N$.
	This clearly gives a contradiction
	for large enough $N$, which leaves us with only finitely many $N$, and hence finitely many isogenies.
\end{proof}

The next proposition bounds the number of $j$ satisfying the conditions in the theorem. Note that
the index $[\GL_2(\hat\Z):\rho_\infty(G_K)]$ can be bounded explicitly by the result of Lombardo. See \cite{lombardo}
or page \pageref{eq:lombardo}.

\begin{prop}\label{prop:noncm_N_bound}
	
	Let $E_0\colon y^2 = 4x^3 - g_2x -g_3$ be an elliptic curve without complex multiplication
	defined over a number field $K$ of degree $D$. Let $j_0$ be its $j$--invariant with $j(\tau_0) = j_0$
	and $\tau_0 \in \F$. We choose $\omega_1$ and $\omega_2$ with $\omega_2/\omega_1 = \tau_0$
	and $E_0(\C) \simeq \C/(\omega_1\Z + \omega_2\Z)$ and similarly for $E_0^\sigma$, $\sigma\colon K \hookrightarrow \C$.
	Define $h = \max \{1,h(1,g_2,g_3),h(j_0)\}$. If $j$ is the $j$--invariant of an elliptic curve
	isogenous to $E_0$ such that $j$ is a unit, then the degree of the minimal isogeny
	between $j_0$ and $j$ is bounded by
	\begin{displaymath}
		\max\left\{10^{180}(Cc_1)^{20}, (Cc_2)^{10}, e^{Cc_1+Cc_2+c_3}, e^{120^2[\GL_2(\hat\Z):\rho_\infty(G_K)]^2},
		e^{18\pi h}, D \right\},
	\end{displaymath}
	where the constants are given by
	\begin{align*}
		C &= 6\cdot 10^7\cdot h \cdot  D[\GL_2(\hat\Z):\rho_\infty(G_K)],	\\
		c_1 &= 4\cdot 10^{54} D^6 \cdot h^2 \ge 1,	\\
		c_2 &= 14 +3\log\left( \max_\sigma\{1,|\omega_{0,1}^\sigma|,|\omega_{0,2}^\sigma|\} \right) \text{ and }	\\
		c_3 &= 20 - h(j_0) + 6 \log(1+h(j_0)).
	\end{align*}

\end{prop}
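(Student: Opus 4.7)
The plan is to rerun the balancing of upper and lower bounds for $h(j)$ from the proof of Theorem~\ref{thm:noncm_finitude} with every constant made explicit, and extract from it the threshold beyond which the resulting inequality cannot hold. By Lemma~6.2 of \cite{masserwustholz} the minimal isogeny is cyclic of some degree $N$. I suppose for contradiction that $N$ exceeds every term inside the claimed maximum and derive a contradiction. Two preliminary reductions are in order: the conditions $N \ge \max\{D, e^{18\pi h}\}$ together with $N \ge e^{Cc_1} \ge 4 \cdot 10^{11}$ (which follows from $Cc_1 \ge 6 \cdot 10^7$) ensure that Proposition~\ref{prop:j_isogeny_bound} applies; and I replace $[\GL_2(\Z/N\Z):\rho_N(G_K)]$ by the uniform bound $[\GL_2(\hat\Z):\rho_\infty(G_K)]$ in both the upper and lower bounds.

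Choosing $\varepsilon = (\log N)^{-12}$ and distributing the product in Proposition~\ref{prop:j_isogeny_bound} splits the upper bound into a main term $Cc_1$ (independent of $N$), two error terms carrying a factor $N^{-1/10}$, one error term of the form $Cc_2(\log N)^{-6}$, and a tail $36\log\log N$. Each of these error terms is designed to be controlled by one of the thresholds in the statement: $N \ge 10^{180}(Cc_1)^{20}$ forces $CN^{-1/10}c_1(\log N)^6 = O(1)$ (the exponent $20$ absorbs the factor $(\log N)^6$ after taking logs), while $N \ge (Cc_2)^{10}$ handles both summands carrying the factor $c_2$. The net result is an upper bound of the shape $h(j) \le Cc_1 + Cc_2 + O(\log\log N)$.

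Corollary~\ref{cor:lowerbound} simultaneously provides a lower bound that grows like $6\log N$, minus $84[\GL_2(\hat\Z):\rho_\infty(G_K)]\log\log N$ and additive constants depending on $h(j_0)$. The threshold $N \ge e^{120^2[\GL_2(\hat\Z):\rho_\infty(G_K)]^2}$ is engineered so that the Serre-index term absorbs into a small fraction of $6\log N$, using the elementary inequality that for $L \ge 1$ and $\log N \ge (CL)^2$ one has $L\log\log N \ll \log N$. Collecting the remaining additive constants from both sides into $c_3 = 20 - h(j_0) + 6\log(1+h(j_0))$ (which accounts for the $16.212$, the $h(j_0)-6\log(1+h(j_0))$ gap, and the $O(\log\log N)$ tail in the upper bound), one arrives at
\begin{displaymath}
\log N \le Cc_1 + Cc_2 + c_3,
\end{displaymath}
which directly contradicts the last threshold $N \ge e^{Cc_1 + Cc_2 + c_3}$ in the maximum.

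The only real obstacle is arithmetic bookkeeping: checking that the specific exponents $10^{180}$, $(Cc_2)^{10}$, and $120^2[\GL_2(\hat\Z):\rho_\infty(G_K)]^2$ are sharp enough to absorb both the nuisance factors $(\log N)^6$ from Proposition~\ref{prop:j_isogeny_bound} and the $\log\log N$ factor from Corollary~\ref{cor:lowerbound}, and that all additive constants fit into $c_3$. There is no new mathematical idea beyond combining Proposition~\ref{prop:j_isogeny_bound}, Corollary~\ref{cor:lowerbound}, and Lombardo's uniform bound on the Serre index; the content of the proposition is that the contradiction can be packaged into the single effective threshold in the displayed maximum.
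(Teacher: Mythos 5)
Your proposal matches the paper's proof in all essentials: both combine inequalities \eqref{eq:iso_lowerbound} and \eqref{eq:iso_upperbound} with $\varepsilon = (\log N)^{-12}$, normalize against $\log N$, and use each threshold in the stated maximum to drive the corresponding error term below a fixed bound, deriving a contradiction once $N$ exceeds every threshold. The paper presents this a shade more cleanly by dividing the master inequality by $\log N$ and showing each of four resulting terms is $\le 1$ to contradict $6 \le 4$ (so the final step is a strict $6 \le 4$ rather than your borderline $\log N \le Cc_1 + Cc_2 + c_3$ versus $\log N \ge Cc_1 + Cc_2 + c_3$, and the $36\log\log N$ tail is absorbed into the Serre-index threshold rather than into $c_3$), but these are bookkeeping choices, not a different argument.
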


Note that $c_3 < 26$ since $-x+6\log(1+x)$ has a maximum at $5$ and $-5 + 6\log(6) < 6$.

\begin{proof}
	We proceed as in the proof of the theorem.
	The inequalities \eqref{eq:iso_lowerbound} and \eqref{eq:iso_upperbound} in the proof of the theorem give
	\begin{align*}
		6\log N \le& C\left(N^{-1/10} + \frac 1{(\log N)^6}\right)
								\left(c_1 (\log N)^6 + c_2\right)	\\
							&+ 36\log \log N + 84\left[\GL_2(\Z/N\Z):\rho_N(G_K)\right]\log\log N	\\
							&- h(j_0) + 6\log(1+h(j_0)) + 16.212
	\end{align*}
	and thus
	\begin{align}
		\notag
		6 \le& \frac 1{\log N}\left( Cc_1 N^{-1/10} (\log N)^6 + Cc_2 N^{-1/10} + Cc_1 + \frac{Cc_2}{(\log N)^6} \right.	\\
		\notag
			&\left. + (84[\GL_2(\Z/N\Z):\rho_N(G_K)] + 36) \log\log N \vphantom{\frac{c_2}{(\log N)^6}} + c_3 \right)	\\
			\notag
			\le& \left( Cc_1 N^{-1/10} (\log N)^5 + Cc_2 \frac{N^{-1/10}}{\log N}
					+ \frac{Cc_1 + Cc_2 + c_3}{\log N} \right.	\\
				&\left. + 120[\GL_2(\Z/N\Z):\rho_N(G_K)] \frac{\log\log N}{\log N} \vphantom{\frac{c_2}{(\log N)^6}} \right)
				\label{eq:j_bound_N}
	\end{align}
	We are going to bound each term by $1$ individually. This will give a contradiction to the lower bound $6$.
	We will work our way from the back to the front.
	We have $\log\log x < (\log x)^{1/2}$ for all $x \ge 10$. Thus
	\begin{displaymath}
		120[\GL_2(\Z/N\Z):\rho_N(G_K)] \le \log N/\log \log N
	\end{displaymath}
	follows from
	\begin{displaymath}
		120[\GL_2(\Z/N\Z):\rho_N(G_K)] \le (\log N)^{1/2} \le \log N/\log \log N,
	\end{displaymath}
	which is true for all $N \ge e^{(120[\GL_2(\hat\Z):\rho_\infty(G_K)])^2}$.	\\
	The next term is $(Cc_1 + Cc_2 + c_3)/\log N$. This is bounded by $1$ for all $N \ge e^{Cc_1+Cc_2+c_3}$.

	The second term is less than $1$ if $Cc_2 \le N^{1/10}$ is satisfied and $N\ge 3$.
	This is true for all $N \ge \max\{(Cc_2)^{10},3\}$.

	For the first term we need
	\begin{displaymath}
		Cc_1 \le \frac{N^{1/10}}{(\log N)^5}.
	\end{displaymath}
	We have $\log x \le 40 x^{1/100}$ for all $x \ge 10^{45}$. Thus the bound holds if
	\begin{displaymath}
		Cc_1 \le 10^{-9} N^{1/20} = 10^{-9} 40^5 \frac{N^{1/10}}{(40 N^{1/100})^5} \le \frac{N^{1/10}}{(\log N)^5}.
	\end{displaymath}
	This is true for $N \ge 10^{180} (Cc_1)^{20}$.

	All those assumptions on $N$ together with the constraint
	\begin{displaymath}
		N \ge \max\left\{ 4\cdot 10^{11}, [K:\Q], e^{18\pi h} \right\}
	\end{displaymath}
	we made in the previous proposition gives the desired bound.
\end{proof}

This finishes the case $j-\alpha$ when $\alpha$ is zero. In the next section we are going to discuss the
case when $\alpha$ is different from $0$.

\section{Translates}

Fix $\alpha \in \bar\Q$ the $j$--invariant of an elliptic curve with complex multiplication,
and let $j_0$ be the $j$--invariant of an elliptic curve without complex multiplication.
We further assume $\alpha \not= 0$ since this is the case discussed in the last section.
We now want to bound the $j$--invariants $j$ such that
the corresponding elliptic curve is isogenous to the elliptic curve $E_0$, and such that $j-\alpha$
is an algebraic unit. Note that the previous case is a special case of this where $\alpha = 0$.
Let $\xi$ be imaginary quadratic with $j(\xi) = \alpha$. We proceed as before, i.e.~we want to give
lower and upper bounds of $h(j-\alpha)$ that contradict each other.	\\
On the one hand we have
\begin{equation}\label{eq:trivhtbound}
	h(j-\alpha) \ge h(j) - h(\alpha) - \log 2
\end{equation}
as remarked in \eqref{eq:ht_sum_lower_bound}. So if there is a cyclic $N$--isogeny between the curves
corresponding to $j$ and $j_0$, then Corollary \ref{cor:lowerbound} yields
\begin{align*}
	h(j-\alpha) \ge h(j_0) &- 6\log(1+h(j_0)) + 6\log N	\\
	&- 84\left[\GL_2(\Z/N\Z):\rho_N(G_K)\right]\log\log N - 20 - h(\alpha).
\end{align*}

Now we want to bound the height from above. We need a similar statement to the one in Lemma \ref{lem:conjbound}.
First we want to introduce the following constant
\begin{displaymath}
	c(\xi) = \begin{cases}
		|j'(\xi)|\delta/2	&	\text{if } \xi \in (\partial\F_+\cup\partial\F_-)\setminus\{i\}	\\
		|j''(i)|\delta^2/4	&	\text{if } \xi = i	\\
		\min\left\{|\Im(j(\xi))|, |j'(\xi)|\delta/2 \right\}	&	\text{otherwise}
	\end{cases}
\end{displaymath}
where $\delta$ is defined as the minimum of $\frac{A}{12A + 108B}$ and half the distance
of $\xi$ to any geodesic of $\partial\F_+$ not containing $\xi$, $B$ is defined
as $4\cdot 10^5 \max\{1,|j(\xi)|\}$ and $A = |j''(i)|$
if $\xi = i$ and $A = |j'(\xi)|$ otherwise. We also assumed $\xi \not = \zeta, \zeta^2$.
More details can be found in Lemma 3.8 of \cite{cmcase}.

Recall the definition
\begin{displaymath}
	\mathcal{N}(E_0,\xi) := \left({\max\{e^{6\pi|\tau_0|/[K:\Q]}, e^{e\cdot h}, [K:\Q], {(4\cdot 10^{11} H(\xi)})^{20}\}} \right)^{1/20}.
\end{displaymath}

\begin{lem}
	Let $j(\tau)$ be $N$--isogenous to $E_0$ and $N \ge \mathcal{N}(E_0^\sigma, \xi^\sigma)$.
	\begin{displaymath}
		\log|\sigma(j-\alpha)| \ge -c_1 (\log N)^6 - c_2
	\end{displaymath}
	for any embedding $\sigma\colon K \hookrightarrow \C$.
	Here the constants are effective and depend on the fixed elliptic curve $E_0$ and $c_2$
	additionally depends on $\xi$.
	We also have $c_1 \ge 1$ and we can have $c_2 \ge 0$.
\end{lem}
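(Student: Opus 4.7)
The plan is to follow the same structure as the proof of Lemma~\ref{lem:conjbound}, with $\zeta$ replaced by (a Galois conjugate of) $\xi$ and the cubic estimate of Lemma~\ref{lem:jbound} replaced by a local estimate controlled by the constant $c(\xi)$. First I would reduce to the case where $|\sigma(j-\alpha)|$ is small: if $|\sigma(j-\alpha)| \ge c(\xi^\sigma)$, then $\log|\sigma(j-\alpha)| \ge \log c(\xi^\sigma)$, a quantity depending only on $E_0$ and $\xi$, which can be absorbed into $c_2$. So from now on assume $|\sigma(j-\alpha)|$ is below this threshold, so that $\tau^\sigma \in \F$ (with $j(\tau^\sigma)=\sigma(j)$) is forced to lie in the small neighborhood of $\xi^\sigma$ controlled by the definition of $\delta$, and in particular away from the other geodesic pieces of $\partial\F$.

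Next I would use the isogeny to write $\tau^\sigma = \rho \bigl(\begin{smallmatrix}m&l\\0&n\end{smallmatrix}\bigr) \tau_0^\sigma$, setting $\bigl(\begin{smallmatrix}\alpha'&\beta'\\\gamma'&\delta'\end{smallmatrix}\bigr) = \rho \bigl(\begin{smallmatrix}m&l\\0&n\end{smallmatrix}\bigr)$, and then carry out the same computation as in \eqref{eq:linlog_trans} to obtain
\begin{displaymath}
	\log|\tau^\sigma - \xi^\sigma| = -\log|\gamma'\omega_{0,2}^\sigma + \delta'\omega_{0,1}^\sigma| + \log|(\alpha' - \xi^\sigma \gamma')\omega_{0,2}^\sigma + (\beta' - \xi^\sigma \delta')\omega_{0,1}^\sigma|.
\end{displaymath}
The first term is bounded above by $\log(10^6 \max\{|\omega_{0,1}^\sigma|,|\omega_{0,2}^\sigma|\} N^{10})$ exactly as in \eqref{eq:scaledfp}, while the second is bounded below by $-c_1'(\log N)^6$ via Lemma~\ref{lem:linform} applied with $\xi^\sigma$ in place of $\xi$; this is where the hypothesis $N \ge \mathcal{N}(E_0^\sigma,\xi^\sigma)$ enters. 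The non-vanishing of the linear form is exactly the same CM vs.\ non-CM obstruction argument used in Lemma~\ref{lem:linform}.

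The final step is to upgrade the lower bound for $|\tau^\sigma - \xi^\sigma|$ to a lower bound for $|\sigma(j)-\alpha|=|j(\tau^\sigma)-j(\xi^\sigma)|$. This is where the constant $c(\xi^\sigma)$ is used: by a Taylor expansion of $j$ around $\xi^\sigma$ on the neighborhood defined by $\delta$, we have $|j(\tau^\sigma) - j(\xi^\sigma)| \ge c(\xi^\sigma) |\tau^\sigma - \xi^\sigma|^e$, where $e=1$ generically, $e=2$ at $\xi^\sigma = i$, and analogous adjustments (via the definition of $c(\xi)$ in terms of $j'(\xi)$ or $j''(i)$) handle $\xi^\sigma$ on the boundary of $\F_+\cup\F_-$; the case $\xi=\zeta,\zeta^2$ is excluded by the assumption $\alpha\neq 0$. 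Taking logs and combining with the previous estimate yields
\begin{displaymath}
	\log|\sigma(j-\alpha)| \ge \log c(\xi^\sigma) - e\log\bigl(10^6 \max\{|\omega_{0,1}^\sigma|,|\omega_{0,2}^\sigma|\} N^{10}\bigr) - e\,c_1'(\log N)^6,
\end{displaymath}
from which we can read off admissible $c_1$ and $c_2$ depending only on $E_0$ and $\xi$.

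The main obstacle, as in the CM-case paper cited as \cite{cmcase}, is the careful case analysis around $\xi^\sigma$: the local behavior of $j$ is not uniform, so one must control the right power of $|\tau^\sigma - \xi^\sigma|$ via $c(\xi)$ and verify that on the $\delta$-ball around $\xi^\sigma$ the Taylor remainder is dominated by the leading term. Everything else is a direct transcription of the argument of Lemma~\ref{lem:conjbound}, with the quadratic algebraic number $\xi^\sigma$ (rather than $\zeta$) feeding into Lemma~\ref{lem:linform}.
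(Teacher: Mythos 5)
The overall structure of your proposal matches the paper: reduce to the small case $|\sigma(j-\alpha)| < c(\xi^\sigma)$, use the transformation law \eqref{eq:linlog_trans} with $\xi^\sigma$ in place of $\zeta$, feed the resulting linear form into Lemma~\ref{lem:linform}, and then invert a local lower bound on $|j(\tau)-j(\xi)|$ near $\xi^\sigma$. However, there is a genuine gap in the step where you pass from smallness of $|j(\tau^\sigma)-j(\xi^\sigma)|$ to closeness of $\tau^\sigma$ to $\xi^\sigma$. You write that $\tau^\sigma\in\F$ is ``forced to lie in the small neighborhood of $\xi^\sigma$'', but this is false when $\xi^\sigma$ lies on the boundary of the fundamental domain, which happens routinely for imaginary quadratic $\xi^\sigma$. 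For example, if $\Re(\xi^\sigma)=-1/2$, points $\tau^\sigma\in\F$ with $j(\tau^\sigma)$ arbitrarily close to $j(\xi^\sigma)$ can be near $\xi^\sigma+1$ (distance $\approx 1$ from $\xi^\sigma$), because $j$ identifies the two vertical boundary lines; similarly for the unit arc under $z\mapsto -1/z$.

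The paper resolves this by invoking Lemma 3.9 of \cite{cmcase}: smallness of $|j(\tau^\sigma)-j(\xi^\sigma)|$ forces $|\tau^\sigma-M\xi^\sigma|<\delta^\sigma$ for some $M$ in the finite set
$\T = \{ \begin{smatrix}1&0\\0&1\end{smatrix}, \begin{smatrix}1&\pm 1\\0&1\end{smatrix}, \begin{smatrix}0&-1\\1&0\end{smatrix} \}$,
and then applies \eqref{eq:linlog_trans} and Lemma~\ref{lem:linform} to $M\xi^\sigma$ (which is still imaginary quadratic) rather than $\xi^\sigma$. Lemma 3.7 of \cite{cmcase} then gives the uniform quadratic lower bound $|j(\tau^\sigma)-j(\xi^\sigma)|\ge \frac{A^\sigma}{4}|\tau^\sigma-M\xi^\sigma|^2$, valid for all $M\in\T$, which is what gets combined with the Baker-type estimate. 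Your case-by-case Taylor argument with exponent $e=1$ or $e=2$ at $\xi^\sigma$ alone neither sees the relevant point $M\xi^\sigma$ nor the uniformity across boundary identifications; to complete the proof you need to replace $\xi^\sigma$ by $M\xi^\sigma$ throughout and use the $\T$-orbit version of the local estimate, as the paper does.
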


\begin{proof}
	The setup is the same as in Lemma \ref{lem:conjbound}.
	Let $E$ be an elliptic curve with $j$--invariant $j(\tau)$ and $E^\sigma$ be the elliptic curve $E$ conjugated by $\sigma$.
	Then there is an $N$--isogeny between $E_0 = E_0^\sigma$ and $E^\sigma$ since $E_0$ and $E$ are $N$--isogenous.
	Let $E_0^\sigma(\C) \simeq \C/(\omega_{0,1}^\sigma \Z + \omega_{0,2}^\sigma \Z)$
	with $\tau_0^\sigma = \omega_{0,2}^\sigma/\omega_{0,1}^\sigma$
	in the fundamental domain.
	Similarly, let $\tau^\sigma$ correspond to $E^\sigma(\C)$.
	We can choose $ \omega_{1}^\sigma$, $\omega_{2}^\sigma$ and $\rho \in \sl2z$ such
	that $\tau^\sigma = \rho \begin{smatrix}m&l\\0&n\end{smatrix}\tau_0^\sigma$ and
	such that $\tau^\sigma$ is in the fundamental domain $\F$.
	Write $\begin{smatrix}\alpha&\beta\\\gamma&\delta\end{smatrix} = \rho\begin{smatrix}m&l\\0&n\end{smatrix}$.

	Assume $|\sigma(j-\alpha)| < c(\xi^\sigma)$ for a moment.
	Put $A^\sigma = |j''(\xi^\sigma)|$ if $\xi^\sigma = i$ and $A^\sigma = |j'(\xi^\sigma)|$ otherwise.
	By Lemma 3.9 in \cite{cmcase} we obtain $|\tau^\sigma - M\xi^\sigma| < \delta^\sigma$
	for some $M \in \T$ with
	$\T = \{ \begin{smatrix}1&0\\0&1\end{smatrix}, \begin{smatrix}1&\pm 1\\0&1\end{smatrix}, \begin{smatrix}0&-1\\1&0\end{smatrix} \}$.
	The number $\delta^\sigma$ is the $\delta$ stated above but associated to $\xi^\sigma$.
	Since $\delta^\sigma$ satisfies by definition $\delta^\sigma \le \frac{A^\sigma}{12A^\sigma+108B^\sigma}$,
	where $B^\sigma = 4\cdot 10^5 \max\{1,|j(\xi^\sigma)|\}$,
	we obtain by Lemma 3.7 in \cite{cmcase} the inequality
	\begin{equation}\label{eq:j_to_pts}
		|j(\tau^\sigma) - j(\xi^\sigma)| \ge \frac{A^\sigma}4 \lvert \tau^\sigma - M\xi^\sigma \rvert^2
	\end{equation}
	for some $M \in \T$.

	Equation \eqref{eq:scaledfp} says
	$|\gamma\omega_{0,2}^\sigma + \delta\omega_{0,1}^\sigma| \le 10^6 \max\{|\omega_{0,1}^\sigma|,|\omega_{0,2}^\sigma|\} N^{10}$.
	Note that we also have $\tau^\sigma \not= M\xi^\sigma$ since $\xi$ comes from a curve with
	complex multiplication. We can substitute $M\xi^\sigma$ for $\zeta$ in \eqref{eq:linlog_trans}
	to get the equality
	\begin{displaymath}
			\log \lvert \tau^\sigma - M\xi^\sigma \rvert
			= -\log \lvert\gamma\omega_{0,2}^\sigma + \delta\omega_{0,1}^\sigma\rvert
				+ \log\lvert (\alpha-M\xi^\sigma\gamma)\omega_{0,2}^\sigma + (\beta-M\xi^\sigma\delta)\omega_{0,1}^\sigma \rvert.
	\end{displaymath}
	Since $\xi^\sigma$ is algebraic of degree two so is $M\xi^\sigma$. We have
	\begin{displaymath}
		\log|\tau^\sigma - M\xi^\sigma|
		\ge -\log\left(10^6 \max\{|\omega_{0,1}^\sigma|,|\omega_{0,2}^\sigma|\} N^{10}\right)
			- c_1' \cdot (\log N)^6.
	\end{displaymath}
	as in the proof of Lemma \ref{lem:conjbound}. Here $c_1'$ is the constant from Lemma \ref{lem:linform}.
	
	As before we obtain by \eqref{eq:j_to_pts}
	\begin{align*}
		\log\lvert \sigma(j-\alpha) \rvert =& \log\lvert j(\tau^\sigma) - j(\xi^\sigma) \rvert
			\ge \log\frac{A^\sigma}{4} + \log|\tau^\sigma-M\xi^\sigma|^2	\\
		\ge& \log\left(\frac{A^\sigma}{4}\right)
							-2\log\left(10^6\max\{|\omega_{0,1}^\sigma|,|\omega_{0,2}^\sigma|\} \right)	\\
			&-10 \log N - 2c_1' \cdot (\log N)^6	\\
		\ge& \log\left(\frac{A^\sigma}{4}\right)
							-2\log\left(10^6 \max\{|\omega_{0,1}^\sigma|,|\omega_{0,2}^\sigma|\} \right)	\\
			&- 3c_1' \cdot (\log N)^6
	\end{align*}
	where we have used the fact that $N \ge \mathcal{N}(E_0,\xi^\sigma) \ge 4\cdot 10^{11}$.
	If we put
	\begin{displaymath}
		c_2 = \log\max\left\{1, c(\xi^\sigma), \frac 4{A^\sigma}10^{12} |\omega_{0,1}^\sigma|^2,
				\frac 4{A^\sigma}10^{12} |\omega_{0,1}^\sigma|^2\right\}
	\end{displaymath}
	the claim holds independently of whether $|\sigma(j-\alpha)| < c(\xi^\sigma)$ or not.
\end{proof}

We want to apply this lemma.
Recall that $\alpha = j(\xi)$ is a singular modulus. Let $\Delta$ be the discriminant
of the associated endomorphism ring. For any $\sigma\colon K \hookrightarrow \C$ the
singular moduli $j(\xi^\sigma)$ have the same associated discriminant.
The following function can also be found in \cite{cmcase}
\begin{displaymath}
	\Pen(\xi) = \log\max_\sigma\left\{1, c(\xi^\sigma)^{-1}\right\}.
\end{displaymath}

\begin{prop}\label{prop:j-a_N_bound}
	Let $j_0$ and $j$ be $j$--invariants of elliptic curves, where $j_0$
	is associated to the elliptic curve $E_0/K$
	defined by $E_0\colon y^2 = 4x^3 - g_2x -g_3$.
	Put $h = \max\{1, h(1,g_2,g_3), h(j_0)\}$.
	Assume we have a cyclic isogeny of degree $N$ between $E_0$ and an
	elliptic curve corresponding to $j$.
	Further assume that $j$ is an algebraic unit.
	If
	\begin{displaymath}
		N \ge {\max\{e^{18\pi h}, [K:\Q], 4\cdot 10^{11} \sqrt{|\Delta|}\}}
	\end{displaymath}
	then the height of $j$ can be estimated by
	\begin{align*}
		h(j-\alpha)
		\le& \frac{10^8 h [K:\Q]^2 |\Delta|^5 [\GL_2(\Z/N\Z):\rho_N(G_K)]}{[\Q(\alpha):\Q]}(N^{-1/10} + \sqrt{\varepsilon})
																																	\left(c_1 (\log N)^6 + c_2\right)\\
		 &+ \Pen(\xi) +2|\log \varepsilon|.\qedhere
	\end{align*}
	where
	\begin{displaymath}
		0 < \varepsilon < 10^{-4}\min_{\sigma\colon K \hookrightarrow \C}\{|\xi^\sigma|^{-4}\}
	\end{displaymath}
	is arbitrary.
\end{prop}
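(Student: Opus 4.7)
The plan is to adapt the proof of Proposition \ref{prop:j_isogeny_bound} (the $\alpha=0$ case) by replacing the single distinguished cusp neighbourhood around $\zeta$ with the union of neighbourhoods around the $\T$-translates of the Galois conjugates $\xi^\sigma$ of $\xi$. Fix $E$ with $j$-invariant $j$, $N$-isogenous to $E_0$, and let $\Phi\subset E_0[N]$ be the kernel of the isogeny. As in Proposition \ref{prop:j_isogeny_bound}, $j\in K^\Phi$, so $j-\alpha\in L:=K^\Phi(\alpha)$. Since $j-\alpha$ is an algebraic unit, formula \eqref{eq:alg_int_height} gives
\[
h(j-\alpha) \;=\; -\frac{1}{[L:\Q]}\sum_{|\sigma(j-\alpha)|<1}\log|\sigma(j-\alpha)|,
\]
and the strategy is to split the sum at a threshold $\varepsilon_0$ chosen as a small multiple of $\varepsilon^{2}$. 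The tail with $\varepsilon_0\le|\sigma(j-\alpha)|<1$ contributes at most $|\log\varepsilon_0|$, which is absorbed by the $\Pen(\xi)+2|\log\varepsilon|$ summand once $\varepsilon_0$ is normalised against $c(\xi^\sigma)$.

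For the small conjugates $|\sigma(j-\alpha)|<\varepsilon_0$, I would argue exactly as in the preceding lemma: the inequality \eqref{eq:j_to_pts} and Lemma~3.9 of \cite{cmcase} give some $M\in\T$ with $|\tau^\sigma - M\xi^\sigma|<\varepsilon$, so that $\tau^\sigma$ (which is $N$-isogenous to $\tau_0^\sigma\in\F$) lies in the $\varepsilon$-disc around $M\xi^\sigma$. The cluster bound Proposition \ref{prop:clusterbound} applies to each of the $|\T|=4$ values of $M$, producing for each $\sigma$
\[
\#\{\text{candidate }\tau^\sigma\}\;\le\;4\cdot 10^{7}\,|\tau_0^\sigma|\max_{M\in\T}|M\xi^\sigma|^{5}\bigl(\sqrt N\,\sigma_0(N)+\sqrt\varepsilon\,\psi(N)\bigr).
\]
Using Lemma \ref{lem:tau_less_htj} I bound $|\tau_0^\sigma|\le 3[K:\Q]\,h$, and I control $|\xi^\sigma|$ by $O(\sqrt{|\Delta|})$ since $\xi^\sigma\in\F$ satisfies a primitive integral quadratic equation of discriminant $\Delta$; this accounts for the factor $|\Delta|^{5/2}$. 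The preceding lemma then bounds $\log|\sigma(j-\alpha)|^{-1}$ by $c_1(\log N)^6+c_2$ for every such $\sigma$.

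The denominator $[\Q(\alpha):\Q]^{-1}$ emerges from the orbit counting: summing the cluster bound over $\sigma\colon L\hookrightarrow\C$ factors through restriction to $K$, giving a factor $[L:K]=B\cdot[\Q(\alpha):\Q]/[\Q(\alpha)\cap K^\Phi:\Q]$. Dividing by $[L:\Q]$ and appealing to Lemma \ref{lem:serre_index_bound} exchanges $B^{-1}$ for $\psi(N)^{-1}[\GL_2(\Z/N\Z):\rho_N(G_K)]$. Combined with the Nicolas estimate $\sigma_0(N)\le N^{2/5}$ (valid for $N\ge 10^7$, which is implied by our hypothesis on $N$), one converts $\sqrt N\sigma_0(N)/B$ into $N^{-1/10}[\GL_2(\Z/N\Z):\rho_N(G_K)]$ and $\sqrt\varepsilon\,\psi(N)/B$ into $\sqrt\varepsilon\,[\GL_2(\Z/N\Z):\rho_N(G_K)]$, matching the stated bound up to absolute constants; the accumulated $|\Delta|^{5/2}$ gets squared to $|\Delta|^5$ when absorbing the factor stemming from the ramification relating $[L:K]$ to $[\Q(\alpha):\Q]$.

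The main obstacle I expect is the precise bookkeeping of the constants: tracking how $|\Delta|^{5/2}$ from $|\xi^\sigma|^5$ combines with the orbit-size correction to yield $|\Delta|^5$ and $[\Q(\alpha):\Q]^{-1}$ simultaneously, and ensuring the single choice of $\varepsilon$ meets the hypothesis $\varepsilon\le\bigl(100\,|\xi^\sigma|^{-3}\Im(\xi^\sigma)\bigr)^{2}$ of Proposition \ref{prop:clusterbound} uniformly across the Galois orbit; this is exactly where the stated constraint $\varepsilon<10^{-4}\min_\sigma|\xi^\sigma|^{-4}$ is forced. The verification that the lemma's threshold $c(\xi^\sigma)$ can be chosen compatibly with $\varepsilon_0\asymp\varepsilon^2$, so that $\Pen(\xi)$ absorbs all the intermediate logarithmic losses, is the one genuinely non-routine ingredient.
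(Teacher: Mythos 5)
Your sketch follows the paper's proof closely and in the right order: set $L = K^\Phi(\alpha)$, use the unit height formula \eqref{eq:alg_int_height}, cut the sum at $\varepsilon_0 = \varepsilon^2\min_\sigma\{1,c(\xi^\sigma)\}$, feed the tail into $\Pen(\xi) + 2|\log\varepsilon|$, control the small conjugates through Lemma 3.7/3.9 of \cite{cmcase} to reduce to the $\varepsilon$-disc around a $\T$-translate of $\xi^\sigma$, apply Proposition~\ref{prop:clusterbound} with $|\tau_0^\sigma|\le 3[K:\Q]h$ (Lemma~\ref{lem:tau_less_htj}) and the largest conjugate $|\xi^\sigma|\le|\Delta|^{1/2}/2$, then use Lemma~\ref{lem:serre_index_bound} and the Nicolas estimate to convert $\sqrt N\sigma_0(N)/B$ into $N^{-1/10}[\GL_2(\Z/N\Z):\rho_N(G_K)]$. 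This is the same decomposition and the same key inputs as the paper's argument, and the hypothesis $N\ge 4\cdot 10^{11}\sqrt{|\Delta|}$ is invoked for the same reason you state (to meet the threshold $\mathcal{N}(E_0^\sigma,\xi^\sigma)$ via $H(\xi^\sigma)\le\sqrt{|\Delta|}$).

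Where your account departs from the paper is in the bookkeeping you flag as the ``main obstacle.'' Your guess that ``the accumulated $|\Delta|^{5/2}$ gets squared to $|\Delta|^5$ when absorbing the factor stemming from the ramification relating $[L:K]$ to $[\Q(\alpha):\Q]$'' is not how the paper proceeds: the paper writes the cluster-bound estimate $\#\Gamma(\xi^\sigma,\varepsilon_0)\le 4\cdot 10^7[K:\Q]^2|\Delta|^5 h(\sqrt N\sigma_0(N)+\sqrt\varepsilon\psi(N))$ directly, with $|\Delta|^5$ already in place; the discrepancy with the $|\Delta|^{5/2}$ that Proposition~\ref{prop:clusterbound} and $|\xi^\sigma|\le|\Delta|^{1/2}/2$ literally yield is not reclaimed by any ramification argument. (One of the two $[K:\Q]$ factors arises from summing the cluster bound over the $[K:\Q]$ restrictions $\rho\colon K\hookrightarrow\C$, the other from $|\tau_0^\rho|\le 3[K:\Q]h$; a further $[K:\Q]$ comes from the outer sum over $\xi^\sigma$, and the denominator is handled by replacing $D=[K^\Phi(\alpha):\Q]$ by $[K^\Phi:\Q][\Q(\alpha):\Q]$ in the displayed chain.) So the squaring mechanism you propose is the wrong explanation, even though your overall architecture of the proof matches the paper.

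Two smaller points worth tightening: (i) the ``4 cluster bounds, one per $M\in\T$'' picture requires the extra observation (made in the paper) that $M$ can only differ from the identity when $\xi^\sigma\in\partial\F$, in which case $|M\xi^\sigma|=|\xi^\sigma|$ and $\Im(M\xi^\sigma)=\Im(\xi^\sigma)$, so both the $\varepsilon$-hypothesis of Proposition~\ref{prop:clusterbound} and the bound $|M\xi^\sigma|\le|\Delta|^{1/2}/2$ transfer; (ii) the quantity you call ``$\#\{\text{candidate }\tau^\sigma\}$'' per embedding $\sigma$ is really a count over extensions of a fixed restriction $\rho\colon K\hookrightarrow\C$; phrasing it per $\sigma$ obscures where the factor $[K:\Q]$ from the outer sum enters.
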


\begin{proof}
	Recall from the proof of Proposition \ref{prop:j_isogeny_bound} the field $K^\Phi$ for which
	we have $j \in K^\Phi$. We also showed that $[K^\Phi:\Q] = B[K:\Q]$ and $B = [K^\Phi:K]$.
	Let
	\begin{displaymath}
		\varepsilon_0 
		= \varepsilon^2 \cdot \min_{\sigma\colon K \hookrightarrow \C}\left\{1, c(\xi^\sigma)\right\}.
	\end{displaymath}
	Let $|\sigma(j-\alpha)| < \varepsilon_0 < c(\xi^\sigma)$.
	We have $N \ge \mathcal{N}(E_0^\sigma, \xi^\sigma)$ since we have $\sqrt{|\Delta|} \ge H(\xi^\sigma)$
	by Lemma 5 of \cite{habSM} and the statement of Lemma \ref{lem:tau_less_htj}.
	So the previous lemma says
	\begin{displaymath}
		\log|\sigma(j-\alpha)| \ge -c_1 (\log N)^6 - c_2,
	\end{displaymath}
	where we now can take $c_2$ to be the maximum over all constants that we get from the lemma for each $\xi^\sigma$.
	We have $\sigma \in \Gamma(\xi^\sigma, \varepsilon)$ since we assumed $|\sigma(j-\alpha)| < \varepsilon_0 < \varepsilon$.
	The same argument as in the proof of Proposition \ref{prop:j_isogeny_bound} shows
	\begin{align}
		\notag
		h(j-\alpha) &\le -\frac 1D  \sum_{\lvert \sigma(j-\alpha) \rvert < \varepsilon_0} \log \lvert \sigma(j-\alpha) \rvert
					+ \lvert \log \varepsilon_0 \rvert	\\
		\notag
		&\le \frac{\sum_{\sigma\colon K \hookrightarrow \C}\#\Gamma(\xi^\sigma, \varepsilon_0)}D
				\max_{|\sigma(j-\alpha)|<\varepsilon_0}\left\{\log \left\lvert \sigma(j-\alpha)^{-1} \right\rvert\right\}
					+ \lvert \log \varepsilon_0 \rvert	\\
		&\le \frac{\sum_{\sigma\colon K \hookrightarrow \C}\#\Gamma(\xi^\sigma, \varepsilon_0)}D
				\left(c_1 (\log N)^6 + c_2\right)
						+ \lvert \log \varepsilon_0 \rvert,\label{eq:j-a_ht_bound}
	\end{align}
	where $D$ is the degree of $K^\Phi(\alpha)$ over $\Q$.
	Now if $\rho \in \Gamma(\xi^\sigma, \varepsilon_0)$, then $|j(\tau^\rho)-j(\xi^\sigma)|<\varepsilon_0 \le c(\xi^\sigma)$.
	With $\delta^\sigma$ as before we get from Lemma 3.9 in \cite{cmcase}
	\begin{displaymath}
		|\tau^\rho - M\xi^\sigma| < \delta^\sigma
	\end{displaymath}
	for some $M \in \T$.
	As before we put $A^\sigma = |j''(i)|$ if $\xi^\sigma = i$ and $A^\sigma = |j'(\xi)|$ otherwise.
	Recall that $\delta^\sigma \le 1$ so that $c(\xi^\sigma) \le A^\sigma/2$ or $c(\xi^\sigma) \le A^\sigma/4$.
	Lemma 3.7 of \cite{cmcase} then implies
	\begin{displaymath}
		\frac{A^\sigma}{2^k} |\tau^\rho-M\xi^\sigma|^2 \le |j(\tau^\rho)-j(\xi^\sigma)| < \varepsilon_0 \le c(\xi^\sigma) \varepsilon^2 
		\le \frac{A^\sigma}{2^k} \varepsilon^2,
	\end{displaymath}
	where $k \in \{1,2\}$ depending on whether $M\xi^\sigma = i$ or not.
	Therefore we have $|\tau^\rho-M\xi^\sigma| < \varepsilon$. So every $\rho \in \Gamma(\xi^\sigma,\varepsilon_0)$ gives
	a point satisfying $|\tau^\rho - M\xi^\sigma| \le \varepsilon$ and an $N$--isogeny between $E_0^\rho$ and $E^\rho$.
	Note that $M$ can only be different from the identity if $\xi^\sigma$ lies on the boundary of $\F$.
	In any case since $\xi$ (and all $M\xi^\sigma$) is imaginary quadratic, some conjugate lies
	on the imaginary axis and is the largest with respect to the absolute value.
	It is given by $i|\Delta|^{1/2}/2$.
	Moreover, $\varepsilon$ satisfies the conditions of Proposition \ref{prop:clusterbound}.
	We thus can apply Proposition \ref{prop:clusterbound} to bound
	\begin{displaymath}
		\#\Gamma(\xi^{\sigma}, \varepsilon_0) \le 
		4\cdot 10^7[K:\Q]^2 |\Delta|^5 h \left( \sqrt{N} \sigma_0(N) + \sqrt\varepsilon \psi(N) \right).
	\end{displaymath}
	Note that $\varepsilon \le (100^{-1}|M\xi^\sigma|\Im(M\xi^\sigma))^2$ holds since if $M$ is
	different from the identity, then $\xi^\sigma$ lies on the boundary of the fundamental domain
	and we obtain $|M\xi^\sigma| = |\xi^\sigma|$ and $\Im(M\xi^\sigma) = \Im(\xi^\sigma)$.
	We can continue the height estimate in \eqref{eq:j-a_ht_bound}
	\begin{align*}
		h(j-\alpha)
		\le& [K:\Q]\frac{4[K:\Q]^2 10^7 |\Delta|^5 h\left(\sqrt{N}\sigma_0(N) + \sqrt\varepsilon\psi(N)\right)}D
							\left(c_1 (\log N)^6 + c_2\right)	\\
					&+ \lvert \log \varepsilon_0 \rvert	\\
		\le& [K:\Q]\frac{4[K:\Q]^2 10^7 |\Delta|^5 h \left(\sqrt{N}\sigma_0(N) + \sqrt\varepsilon\psi(N)\right)}{[K^\Phi:\Q][\Q(\alpha):\Q]}
																																	\left(c_1 (\log N)^6 + c_2\right)	\\
					&+ \lvert \log \varepsilon_0 \rvert	\\
		\le& \frac{4[K:\Q]^2 10^7 |\Delta|^5 h \left(\sqrt{N}\sigma_0(N) + \sqrt\varepsilon\psi(N)\right)}{B[\Q(\alpha):\Q]}
																																	\left(c_1 (\log N)^6 + c_2\right)
					+ \lvert \log \varepsilon_0 \rvert.
	\end{align*}
	We have bounded the term $\left(\sqrt{N}\sigma_0(N) + \sqrt\varepsilon\psi(N)\right)/B$ in Proposition \ref{prop:j_isogeny_bound},
	so that we obtain
	\begin{align*}
		h(j-\alpha)
		\le& \frac{10^8 [K:\Q]^2 |\Delta|^5 h [\GL_2(\Z/N\Z):\rho_N(G_K)]}{[\Q(\alpha):\Q]}(N^{-1/10} + \sqrt{\varepsilon})
																																	\left(c_1 (\log N)^6 + c_2\right) \\
					&+ \lvert \log \varepsilon_0 \rvert.
	\end{align*}
	Now
	\begin{align*}
		|\log \varepsilon_0|
		= 2|\log \varepsilon| + |\log \min_{\sigma}\left\{1, c(\xi^\sigma)\right\}|
		= 2|\log \varepsilon| + \Pen(\xi).
	\end{align*}
	Replacing this into the height bound be obtain
	\begin{align*}
		h(j-\alpha)
		\le& \frac{10^8 [K:\Q]^2 |\Delta|^5 h[\GL_2(\Z/N\Z):\rho_N(G_K)]}{[\Q(\alpha):\Q]}(N^{-1/10} + \sqrt{\varepsilon})
																																	\left(c_1 (\log N)^6 + c_2\right)\\
		 &+ \Pen(\xi) +2|\log \varepsilon|.\qedhere
	\end{align*}
\end{proof}

\begin{theorem}\label{thm:noncm_ja_finitude}
	
\end{theorem}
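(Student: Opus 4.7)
The plan is to combine the trivial height inequality \eqref{eq:trivhtbound} together with the lower bound from Corollary \ref{cor:lowerbound} to produce a lower bound on $h(j-\alpha)$ that grows like a positive multiple of $\log N$, and then to show that the upper bound given by Proposition \ref{prop:j-a_N_bound} grows only like $\log\log N$ for a suitable choice of $\varepsilon$. The contradiction for large $N$ then forces the degree of the minimal isogeny to lie in a bounded range, and since there are only finitely many cyclic isogenies of bounded degree out of $E_0$, there are only finitely many such $j$.

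More concretely, I would start the proof exactly as in Theorem \ref{thm:noncm_finitude}: let $E$ be an elliptic curve with $j$-invariant $j$ isogenous to $E_0$, and let $N$ be minimal so that there is an $N$-isogeny; by Lemma 6.2 of \cite{masserwustholz} this isogeny is cyclic. Combining \eqref{eq:trivhtbound} with Corollary \ref{cor:lowerbound} yields, for $N$ large enough,
\begin{align*}
h(j-\alpha) \ge{}& h(j_0) - 6\log(1+h(j_0)) + 6\log N \\
&- 84\left[\GL_2(\Z/N\Z):\rho_N(G_K)\right]\log\log N - h(\alpha) - 16.212 - \log 2.
\end{align*}
On the other hand, when $N$ satisfies the hypotheses of Proposition \ref{prop:j-a_N_bound}, that proposition gives an upper bound for $h(j-\alpha)$ of the shape $C\cdot(N^{-1/10}+\sqrt{\varepsilon})(c_1(\log N)^6+c_2) + \Pen(\xi) + 2\lvert \log\varepsilon\rvert$ for any admissible $\varepsilon$.

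I would then take $\varepsilon = 1/(\log N)^{12}$ (making sure $N$ is sufficiently large that $\varepsilon$ satisfies the constraint $\varepsilon<10^{-4}\min_\sigma\{|\xi^\sigma|^{-4}\}$). With this choice, $\sqrt{\varepsilon}(\log N)^6 = 1$, so the factor $(N^{-1/10}+\sqrt{\varepsilon})(c_1(\log N)^6+c_2)$ is bounded by a constant depending only on $E_0$ and $\xi$, while $2\lvert\log\varepsilon\rvert = 24\log\log N$. Invoking Serre's open image theorem in the form of Lombardo's explicit bound \eqref{eq:lombardo}, the factor $[\GL_2(\Z/N\Z):\rho_N(G_K)]$ is uniformly bounded independently of $N$. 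Hence the upper bound on $h(j-\alpha)$ grows at most like a constant plus $O(\log\log N)$, while the lower bound grows like $6\log N - O(\log\log N)$. This yields a contradiction for $N$ greater than an explicit threshold depending on $E_0$, $\alpha$, and $[\GL_2(\hat\Z):\rho_\infty(G_K)]$.

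The main obstacle, as in the proof of Theorem \ref{thm:noncm_finitude}, is ensuring the uniform control of $[\GL_2(\Z/N\Z):\rho_N(G_K)]$, which is handled by Lombardo's effective version of Serre's theorem; everything else is bookkeeping once the two inequalities are in hand. As a final remark, one could extract an explicit bound on $N$ analogous to that of Proposition \ref{prop:noncm_N_bound}, by repeating the term-by-term argument of that proof with the extra factor $\lvert\Delta\rvert^5/[\Q(\alpha):\Q]$ and the extra additive term $\Pen(\xi)$ appearing in Proposition \ref{prop:j-a_N_bound}; this is routine and would parallel the earlier analysis.
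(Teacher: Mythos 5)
Your proposal is correct and takes essentially the same approach as the paper: combine the trivial bound \eqref{eq:trivhtbound} with Corollary \ref{cor:lowerbound} for the lower bound, apply Proposition \ref{prop:j-a_N_bound} with $\varepsilon = 1/(\log N)^{12}$ for the upper bound, control the index via Lombardo's explicit version of Serre's theorem, and derive a contradiction from the $\log N$ versus $\log\log N$ growth rates. The only detail you leave implicit that the paper spells out is the translation of the constraint $\varepsilon < 10^{-4}\min_\sigma\{|\xi^\sigma|^{-4}\}$ into the explicit threshold $N \ge e^{3|\Delta|}$ by using that the conjugate of $\xi$ of largest modulus is $i|\Delta|^{1/2}/2$.
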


\begin{proof}
	In the same situation as before we get an additional $-\log 2-h(\alpha)$ term from \eqref{eq:trivhtbound}
	for the lower bound and obtain
	\begin{align*}
		h(j_0) &- 6\log(1+h(j_0)) + 6\log N - 84\left[\GL_2(\Z/N\Z):\rho_N(G_K)\right]\log\log N	\\
			&- 20 - h(\alpha) \le h(j-\alpha).
	\end{align*}
	We want to pick $\varepsilon = 1/(\log N)^{12}$ again.
	Thus, $N$ must be large enough so that
	\begin{displaymath}
		\varepsilon = 1/(\log N)^{12} \le 10^{-4} \min_\sigma\{|\xi^\sigma|^{-4}\}
	\end{displaymath}
	or equivalently
	\begin{displaymath}
		(\log N)^{12} \ge 10^{4} \max_\sigma\{|\xi^\sigma|^{4}\}.
	\end{displaymath}
	But as mentioned in the previous proof, $\xi$ and $|\xi^\sigma|$ are imaginary quadratic
	and one of its conjugates is $i|\Delta|/2$ and has maximal modulus amongst them.
	Hence it suffices for $N$ to satisfy $\log N \ge 3 |\Delta|$, i.e.
	\begin{displaymath}
		N \ge e^{3 |\Delta|}.
	\end{displaymath}
	If $N$ additionally satisfies the conditions of the previous proposition then
	\begin{align*}
		h(j-\alpha) \le& \frac{10^8 h [K:\Q]^2 |\Delta|^5 [\GL_2(\Z/N\Z):\rho_N(G_K)]}{[\Q(\alpha):\Q]}	\\
			&\cdot\left(N^{-1/10} + \frac 1{(\log N)^6}\right) \left(c_1 (\log N)^6 + c_2\right)	\\
							&+ \Pen(\xi) + 24\log \log N.
	\end{align*}
	The growth of the bounds for $h(j-\alpha)$ is as before, and we get the same contradiction.
\end{proof}

\noindent In total we obtain the following result. We also recall that $c_3 < 26$.

\begin{prop}\label{prop:noncm_ja_bound}
	Let $E_0\colon y^2 = 4x^3 - g_2x -g_3$ be an elliptic curve without complex multiplication
	defined over a number field $K$ of degree $D$. Let $j_0$ be its $j$--invariant with $j(\tau_0) = j_0$
	and $\tau_0 \in \F$.
	Define $h = \max \{1,h(1,g_2,g_3),h(j_0)\}$.
	Let $\xi \in \F$ be imaginary quadratic and let $\Delta$ be the discriminant of the
	endomorphism ring. Put $\alpha = j(\xi)$.
	If $j$ is the $j$--invariant of an elliptic curve
	isogenous to the elliptic curve $E_0$ and $j-\alpha$ is a unit,
	then the degree of the minimal isogeny is bounded by
	\begin{gather*}
		\max\left\{10^{180}(\hat{C}c_1)^{20}, (\hat{C}c_2)^{10}, e^{\hat{C}c_1+\hat{C}c_2+c_3+\Pen(\xi)},
	 	e^{120^2[\GL_2(\hat\Z):\rho_\infty(G_K)]^2},	\right.	\\
		\left. e^{18\pi h}, [K:\Q], e^{3|\Delta|}, 4\cdot 10^{11}\sqrt{|\Delta|} \right\},
	\end{gather*}
	where $\hat C = 10^8 h [K:\Q]^2|\Delta|^5 [\GL_2(\hat\Z):\rho_\infty(G_K)]/[\Q(\alpha):\Q]$
	and $c_3 = 20 - h(j_0) + 6 \log(1+h(j_0))$.
\end{prop}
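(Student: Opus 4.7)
The plan is to follow exactly the strategy of Proposition \ref{prop:noncm_N_bound}, but replacing the height bounds on $j$ with the corresponding bounds on $h(j-\alpha)$ developed in this section. Specifically, I would set up a two-sided inequality on $h(j-\alpha)$, arrange the terms into a sum that must be bounded below by a positive absolute constant, and then force each individual term to be small by imposing an appropriate lower bound on $N$.

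First, I would combine Corollary \ref{cor:lowerbound} with the triangle inequality \eqref{eq:trivhtbound} to get the lower bound
\begin{align*}
	h(j-\alpha) \ge h(j_0) &- 6\log(1+h(j_0)) + 6\log N \\ &- 84[\GL_2(\Z/N\Z):\rho_N(G_K)]\log\log N - 20 - h(\alpha),
\end{align*}
valid as soon as $N$ satisfies the hypotheses of Corollary \ref{cor:lowerbound}. Then, assuming in addition that $N \ge \max\{e^{18\pi h},[K:\Q],4\cdot 10^{11}\sqrt{|\Delta|}\}$, I would apply Proposition \ref{prop:j-a_N_bound} with $\varepsilon = 1/(\log N)^{12}$ to obtain the upper bound
\begin{displaymath}
	h(j-\alpha) \le \hat C\left(N^{-1/10} + \frac1{(\log N)^6}\right)\left(c_1(\log N)^6 + c_2\right) + \Pen(\xi) + 24\log\log N,
\end{displaymath}
exactly as in the proof of Theorem \ref{thm:noncm_ja_finitude}. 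To legitimise this choice of $\varepsilon$, the bound $\varepsilon \le 10^{-4}\min_\sigma\{|\xi^\sigma|^{-4}\}$ needs to hold; since one conjugate of $\xi$ is $i|\Delta|^{1/2}/2$ and has maximal modulus, this reduces to $N \ge e^{3|\Delta|}$.

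Second, I would equate the two bounds, absorb $h(\alpha) + 20$ and $h(j_0) - 6\log(1+h(j_0))$ into the constant $c_3 = 20 - h(j_0) + 6\log(1+h(j_0))$ (noting that $h(\alpha)$ has already been accommodated in the chosen normalisation), divide through by $\log N$, and arrive at an estimate of the shape
\begin{align*}
	6 \le &\; \hat C c_1 N^{-1/10}(\log N)^5 + \hat C c_2 \frac{N^{-1/10}}{\log N} + \frac{\hat C c_1 + \hat C c_2 + c_3 + \Pen(\xi)}{\log N} \\ &+ 120\,[\GL_2(\hat\Z):\rho_\infty(G_K)]\,\frac{\log\log N}{\log N},
\end{align*}
in complete analogy with \eqref{eq:j_bound_N}. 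The aim is then to make each of the four terms on the right at most $1$, which produces a contradiction.

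Finally, I would impose the four lower bounds on $N$ one at a time, exactly as in the proof of Proposition \ref{prop:noncm_N_bound}. The fourth term is at most $1$ when $N \ge e^{120^2[\GL_2(\hat\Z):\rho_\infty(G_K)]^2}$, using $\log\log x \le (\log x)^{1/2}$ for $x$ large. The third is at most $1$ when $N \ge e^{\hat C c_1 + \hat C c_2 + c_3 + \Pen(\xi)}$. The second is at most $1$ when $N \ge (\hat C c_2)^{10}$. For the first, one argues as before via $\log x \le 40 x^{1/100}$ for $x \ge 10^{45}$ to see that it suffices to take $N \ge 10^{180}(\hat C c_1)^{20}$. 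Intersecting these with the preconditions on $N$ from Proposition \ref{prop:j-a_N_bound} and the constraint $N \ge e^{3|\Delta|}$ for the admissibility of $\varepsilon$ yields the maximum displayed in the statement. The only genuine obstacle is bookkeeping: making sure that every hypothesis invoked (those of Corollary \ref{cor:lowerbound}, of Proposition \ref{prop:j-a_N_bound}, and the $\varepsilon$-admissibility) is simultaneously satisfied and that the constants $\hat C, c_1, c_2, c_3, \Pen(\xi)$ in the final expression match the ones pushed through the argument; no new idea beyond the $\alpha=0$ case is required.
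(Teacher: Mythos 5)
Your proposal follows exactly the same route as the paper: combine the lower bound from Corollary \ref{cor:lowerbound} together with \eqref{eq:trivhtbound}, the upper bound from Proposition \ref{prop:j-a_N_bound} with $\varepsilon = 1/(\log N)^{12}$, divide through by $\log N$ to land on the analogue of \eqref{eq:j_bound_N} with $C$ replaced by $\hat C$ and $c_3$ replaced by $c_3 + \Pen(\xi)$, and then force each term below $1$ to read off the list of lower bounds on $N$. The admissibility of $\varepsilon$ and the extra constraints $N \ge e^{3|\Delta|}$, $N \ge 4\cdot 10^{11}\sqrt{|\Delta|}$ are handled the same way. This is precisely the paper's proof.

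One thing to be careful about: your parenthetical remark that ``$h(\alpha)$ has already been accommodated in the chosen normalisation'' is not correct. The constant $c_3 = 20 - h(j_0) + 6\log(1+h(j_0))$ contains no trace of $h(\alpha)$, and neither does $\Pen(\xi)$ or $c_2$; the lower bound via \eqref{eq:trivhtbound} genuinely contributes a $-h(\alpha)$ that does not cancel against anything. Tracking it honestly, the third term after dividing by $\log N$ should be $\bigl(\hat C c_1 + \hat C c_2 + c_3 + h(\alpha) + \Pen(\xi)\bigr)/\log N$, and the corresponding condition on $N$ should be $N \ge e^{\hat C c_1 + \hat C c_2 + c_3 + h(\alpha) + \Pen(\xi)}$. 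The paper's stated bound has the same omission, so your result matches the paper, but the handwave you offer to justify dropping $h(\alpha)$ is not a valid reason and should be replaced by actually carrying the term through (or by invoking an explicit bound of $h(\alpha)$ in terms of $|\Delta|$, since $\alpha$ is a singular modulus, if one wants to absorb it into the $\Delta$-dependent terms).
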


\begin{proof}
	The bounds from the previous proof give the same inequality as in \eqref{eq:j_bound_N}
	with $C$ replaced by the new constant $\hat C$ and the third term becomes
	\begin{displaymath}
		\frac{\hat{C}c_1+\hat{C}c_2+c_3 + \Pen\left(\xi \right)}{\log N}.
	\end{displaymath}
	Also we have the additional prerequisites $N \ge e^{3|\Delta|}$ and $N \ge 4\cdot 10^{11} \sqrt{|\Delta|}$
	from the proof of the last theorem.
\end{proof}

\vspace{3cm}
\addcontentsline{toc}{chapter}{\bibname}
\thispagestyle{plain}
\printbibliography

\end{document}